\date{August 1, 2019}
\numberwithin{equation}{section}
\newtheorem{thm}{Theorem}[section]
\newtheorem{cor}[thm]{Corollary}
\newtheorem{lem}[thm]{Lemma}
\newtheorem{prop}[thm]{Proposition}
\theoremstyle{remark}
\newtheorem{rem}[thm]{\bf Remark}
\theoremstyle{definition}
\newtheorem{assumption}[thm]{Assumption}
\newtheorem{defin}[thm]{Definition}
\newtheorem{example}[thm]{Example}
\newcommand{\R}{\mathbb{R}} 
\newcommand{\eps}{\varepsilon}
\title[Energy asymptotics in the three-dimensional Brezis--Nirenberg problem]{Energy asymptotics in the three-dimensional Brezis--Nirenberg problem}
\author{Rupert L. Frank}
\address[Rupert L. Frank]{Mathematisches Institut, Ludwig-Maximilians-Universit\"at M\"unchen, Theresienstr. 39, 80333 M\"unchen, Germany, and Mathematics 253-37, Caltech, Pasa\-de\-na, CA 91125, USA}
\email{r.frank@lmu.de, rlfrank@caltech.edu}
\author{Tobias König}
\address[Tobias König]{Mathematisches Institut, Ludwig-Maximilians-Universit\"at M\"unchen, Theresienstr. 39, 80333 M\"unchen, Germany}
\email{tkoenig@math.lmu.de}
\author {Hynek Kova\v{r}\'{\i}k}
\address [Hynek Kova\v{r}\'{\i}k]{DICATAM, Sezione di Matematica, Universit\`a degli studi di Brescia, Italy}
\email {hynek.kovarik@unibs.it}
\thanks{\copyright\, 2019 by the authors. This paper may be reproduced, in its entirety, for non-commercial purposes.\\
Partial support through US National Science Foundation grant DMS-1363432 (R.L.F.) and Studienstiftung des deutschen Volkes (T.K.) is acknowledged. H.~K.  has been partially supported by Gruppo Nazionale per Analisi Matematica, la Probabilit\`a e le loro Applicazioni (GNAMPA) of the Istituto Nazionale di Alta Matematica (INdAM)}
\begin{document}

\begin{abstract}
For a bounded open set $\Omega\subset\R^3$ we consider the minimization problem
$$
S(a+\epsilon V) = \inf_{0\not\equiv u\in H^1_0(\Omega)} \frac{\int_\Omega (|\nabla u|^2+ (a+\epsilon V) |u|^2)\,dx}{(\int_\Omega u^6\,dx)^{1/3}}
$$
involving the critical Sobolev exponent. The function $a$ is assumed to be critical in the sense of Hebey and Vaugon. Under certain assumptions on $a$ and $V$ we compute the asymptotics of $S(a+\epsilon V)-S$ as $\epsilon\to 0+$, where $S$ is the Sobolev constant. (Almost) minimizers concentrate at a point in the zero set of the Robin function corresponding to $a$ and we determine the location of the concentration point within that set. We also show that our assumptions are almost necessary to have $S(a+\epsilon V)<S$ for all sufficiently small $\epsilon>0$. 
\end{abstract}

\maketitle

\section{\bf Introduction and main results}

\subsection{Setting of the problem}

In their celebrated paper \cite{BrNi} Br\'ezis and Nirenberg considered the problem of minimizing the quotient
$$
\mathcal S_a[u] := \frac{\int_\Omega (|\nabla u|^2+ a|u|^2)\,dx}{(\int_\Omega u^6\,dx)^{1/3}}
$$
over all $0\not\equiv u\in H^1_0(\Omega)$, where $\Omega\subset\R^3$ is a bounded open set and $a$ is a continuous function on $\Omega$. We denote the corresponding infimum by
$$
S(a) := \inf_{0\not\equiv u\in H^1_0(\Omega)} \mathcal S_a[u] \,.
$$
This number is to be compared with
$$
S := 3 \left( \frac{\pi}{2} \right)^{4/3} \,,
$$
the sharp constant \cite{Rod,Ro,Au,Ta} in the Sobolev inequality
\begin{equation}
\label{eq:sobolev}
\int_{\R^3} |\nabla u|^2\,dx \geq S \left( \int_{\R^3} u^6\,dx\right)^{1/3} \,,
\qquad u\in\dot H^1(\R^3) \,.
\end{equation}
One of the findings in \cite{BrNi} is that if $a$ is small (for instance, in $L^\infty(\Omega)$), then $S(a)=S$. This is in stark contrast to the case of dimensions $N\geq 4$ where the corresponding analogue of $S(a)$ (with the exponent $6$ replaced by $2N/(N-2)$) is always strictly below the corresponding Sobolev constant, whenever $a$ is negative somewhere.

This phenomenon leads naturally to the following notion due to Hebey and Vaugon \cite{HeVa}.

\begin{defin}
Let $a$ be a continuous function on $\Omega$. We say that $a$ is \emph{critical} in $\Omega$ if $S(a)=S$ and if for any continuous function $\tilde a$ on $\Omega$ with $\tilde a\leq a$ and $\tilde a\not\equiv a$ one has $S(\tilde a)<S(a)$. 
\end{defin}

Our goal in this paper is to compute the asymptotics of $S(a+\epsilon V)-S$ as $\epsilon\to 0$ for critical $a$ and to understand the behavior of corresponding minimizers. Here $V$ is a bounded function on $\Omega$, without any restrictions on its sign.

A key role in our analysis is played by the regular part of the Green's function and its zero set. To introduce these, we follow the sign and normalization convention of \cite{rey2}. If the operator $-\Delta+a$ in $\Omega$ with Dirichlet boundary conditions is coercive (which, in particular, is the case if $a$ is critical), then it has a Green's function $G_a$ satisfying
\begin{equation} \label{Ga-pde}
\left\{
\begin{array}{l@{\quad}l}
-\Delta_x\, G_a(x,y) + a(x)\, G_a(x,y) =  4\pi \, \delta_y & \quad \text{in} \ \ \Omega\,, \\
& \\
G_a(x,y) = 0  & \quad \text{on} \ \ \partial\Omega \,.
\end{array}
\right.  
\end{equation}
The regular part of $G_a$ is defined by 
\begin{equation} \label{ha-def}
H_a(x,y) := \frac{1}{|x-y|} - G_a(x,y)\, .
\end{equation}
It is well-known that for each $x\in\Omega$ the function $H_a(x,\cdot)$, which is originally defined in $\Omega\setminus\{x\}$, extends to a continuous function in $\Omega$ and we abbreviate
$$
\phi_a(x) := H_a(x,x) \,.
$$
It is well-known that the function $\phi_a$ is relevant for problems involving the critical Sobolev exponent, see, e.g., \cite{Sc} and \cite{Ba}. For the problem at hand, it was shown in \cite[Thm. 7]{Br} that if $\phi_a(x)<0$ for some $x\in\Omega$, then $S(a)<S$.  (In \cite{Br} this is attributed to Schoen \cite{Sc} and a work in preparation by McLeod.) Conversely, it was conjectured in \cite{Br} and proved by Druet in \cite{dr} that if $S(a)<S$, then $\phi_a(x)<0$ for some $x\in\Omega$. An alternative proof, assuming only continuity of $a$, is given in \cite{esp}. Thus, the (non-local) condition $\min_\Omega \phi_a<0$ is necessary and sufficient for $S(a)<S$, and replaces the (local) condition $\min_\Omega a <0$ in dimensions $N\geq 4$.

The above results imply that, if $a$ is critical, then $\min_\Omega \phi_a = 0$. In particular, the set
$$
\mathcal N_a := \{ x\in\Omega:\ \phi_a(x)=0 \}
$$ 
is non-empty.


\subsection{Main results}

Let us proceed to a precise statement of our main results. Throughout this paper we work under the following assumption.

\begin{assumption}\label{ass}
The set $\Omega\subset\R^3$ is open, bounded and has a $C^2$ boundary. The function $a$ satisfies $a\in C(\overline\Omega)\cap C^1(\Omega)$ and is critical in $\Omega$. Moreover,
\begin{equation}
\label{eq:anegative}
a(x)< 0
\qquad\text{for all}\ x\in\mathcal N_a \,.
\end{equation}
Finally, $V\in L^\infty(\Omega)$.
\end{assumption}

We will see in Corollary \ref{cor-2} that criticality of $a$ alone implies $a(x)\leq 0$ for all $x\in\mathcal N_a$. Therefore assumption \eqref{eq:anegative} is not severe.

We set
\begin{align} \label{eq-Q}
Q_V(x) & := \int_\Omega V(y) \, G_a(x,y)^2 \,dy , \qquad x\in\Omega \,,
\end{align}
and
$$
\mathcal N_a(V) :=\{ x\in \mathcal N_a:\ Q_V(x)< 0 \} \,.
$$

The following is our main result.

\begin{thm}
\label{thm-main}
Assume that $\mathcal N_a(V)\neq \emptyset$. Then $S(a+\epsilon V)<S$ for all $\epsilon>0$ and 
\begin{equation} 
\label{S-lim}
\lim_{\epsilon\to 0+} \frac{ S(a+\epsilon V)-S}{\epsilon^2} = \,  - \left( \frac 3S \right)^\frac12 \frac{1}{8\pi^2} \sup_{x \in \mathcal N_a(V)} \frac{Q_V(x)^2}{|a(x)|} \, .
\end{equation}
\end{thm}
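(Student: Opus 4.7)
The plan is to establish matching upper and lower bounds for $S(a+\epsilon V)$ by means of a precise asymptotic expansion of $\mathcal S_{a+\epsilon V}$ evaluated on concentrating Aubin--Talenti bubbles. For the upper bound, fix $x_0 \in \mathcal N_a(V)$ and test the Rayleigh quotient against $u_{x_0,\lambda} = P_a U_{x_0,\lambda}$, where $U_{x_0,\lambda}(x) = \lambda^{1/2}(1+\lambda^2|x-x_0|^2)^{-1/2}$ and $P_a$ is a projection onto $H^1_0(\Omega)$ adapted to $-\Delta+a$, chosen so that $u_{x_0,\lambda}(x) \approx \lambda^{-1/2} G_a(x,x_0)$ for $x$ bounded away from $x_0$. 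A careful expansion of the three constituent integrals $\int|\nabla u|^2$, $\int a\,u^2$, $\int u^6$ as $\lambda\to\infty$ gives, to the relevant order,
\[
\mathcal S_{a+\epsilon V}[u_{x_0,\lambda}] = S + \frac{c_1 \left(\phi_a(x_0) + \tfrac{\epsilon}{4\pi} Q_V(x_0)\right)}{\lambda} + \frac{c_2 \, a(x_0)}{\lambda^2} + \text{lower order},
\]
with $c_1, c_2$ explicit constants. The $\epsilon Q_V$ contribution arises from the identity $\partial_\epsilon \phi_{a+\epsilon V}(x_0)|_{\epsilon=0} = Q_V(x_0)/(4\pi)$, obtained by differentiating the Green's function equation \eqref{Ga-pde} in the coupling constant. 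Since $\phi_a(x_0) = 0$, $Q_V(x_0) < 0$, and $a(x_0) < 0$ for $x_0 \in \mathcal N_a(V)$, the $\lambda^{-1}$-coefficient is negative and the $\lambda^{-2}$-coefficient acts as a restoring contribution; balancing the two forces $\lambda \sim \epsilon^{-1}$ and produces a minimum of the form $S - \tilde c\, \epsilon^2 \, Q_V(x_0)^2/|a(x_0)|$. Checking the algebra $\tilde c = (3/S)^{1/2}/(8\pi^2)$ by direct computation and then taking the supremum over $x_0 \in \mathcal N_a(V)$ yields the upper bound in \eqref{S-lim}.

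For the matching lower bound, I would take an almost minimizing sequence $u_\epsilon \in H^1_0(\Omega)$ with $\|u_\epsilon\|_{L^6(\Omega)} = 1$ and $\mathcal S_{a+\epsilon V}[u_\epsilon] \leq S(a+\epsilon V) + o(\epsilon^2)$. Since $S(a+\epsilon V) \to S$ by the upper bound, a Struwe / Bianchi--Egnell profile decomposition gives
\[
u_\epsilon = \alpha_\epsilon P_a U_{x_\epsilon,\lambda_\epsilon} + w_\epsilon,
\]
with $\alpha_\epsilon \to 1$, $x_\epsilon \to x^* \in \overline\Omega$, $\lambda_\epsilon \to \infty$, and $w_\epsilon$ orthogonal (in $\dot H^1$) to the tangent space of the bubble manifold at $P_a U_{x_\epsilon,\lambda_\epsilon}$. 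Inserting this into the quotient and invoking coercivity of the second variation of the Sobolev functional on the orthogonal complement, the quadratic contribution of $w_\epsilon$ is nonnegative, so the remaining terms reproduce the upper-bound expansion. Comparing with $S(a+\epsilon V) + o(\epsilon^2)$ then forces $x^* \in \mathcal N_a$ (otherwise the $\phi_a(x^*)/\lambda$ term is strictly positive and gives a contradiction), $Q_V(x^*) \leq 0$ so that $x^* \in \overline{\mathcal N_a(V)}$, and $\lambda_\epsilon$ near the optimal scaling $\epsilon^{-1}$, whence the matching constant in \eqref{S-lim} follows.

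The principal technical obstacle is to control the remainder $w_\epsilon$ in $\dot H^1(\Omega)$ to order $o(\epsilon)$, so that its quadratic contribution is $o(\epsilon^2)$ and does not obscure the leading asymptotics. This requires a Lyapunov--Schmidt-type reduction that exploits non-degeneracy of the Aubin--Talenti bubbles as critical points of the Sobolev quotient---the linearized kernel being exactly four-dimensional, spanned by translations and dilations---together with a quantitative coercivity estimate for the Hessian on its orthogonal complement. A secondary delicate point is ruling out concentration on $\partial\Omega$ and ensuring $a(x^*)<0$ strictly---both needed for the denominator $|a(x_0)|$ in \eqref{S-lim} to make sense---which follows from criticality of $a$ combined with assumption \eqref{eq:anegative}.
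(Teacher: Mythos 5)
Your upper bound strategy is essentially the paper's: the test function $P_aU_{x_0,\lambda}$ you describe (decaying like $\lambda^{-1/2}G_a(\cdot,x_0)$ away from the bubble) is the paper's $\psi_{x_0,\lambda}=PU_{x_0,\lambda}-\lambda^{-1/2}(H_a(x_0,\cdot)-H_0(x_0,\cdot))$, and the identity $\partial_\epsilon\phi_{a+\epsilon V}(x_0)\big|_{\epsilon=0}=Q_V(x_0)/(4\pi)$ you invoke is a correct and clean way to see where $Q_V$ comes from. The upper bound is fine.

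The lower bound, however, has a genuine gap at the step ``invoking coercivity of the second variation\ldots the quadratic contribution of $w_\epsilon$ is nonnegative, so the remaining terms reproduce the upper-bound expansion.'' This does not close the argument, for two concrete reasons. First, the expansion of the Rayleigh quotient around any approximate profile contains a \emph{linear} term in the remainder (coming from $\int a\,\psi_{x,\lambda}w$ and the Green's function correction); that linear term is $O(\lambda^{-1/2}\|\nabla w\|)$, which is the same size as the coercive quadratic term $\rho\|\nabla w\|^2$ when $\|\nabla w\|\sim\lambda^{-1/2}$. Coercivity only becomes useful after one ``completes the square'' against this linear term, which is how one deduces that $w$ is asymptotically $-\lambda^{-1/2}(H_a(x,\cdot)-H_0(x,\cdot))+q$ with $\|\nabla q\|=o(\lambda^{-1/2})$ --- and in the paper even this step is nontrivial, requiring a spectral cutoff $\mathds 1(-\Delta+a\le\mu)$ to make the coercivity compatible with the square-completion (Lemmas 5.3--5.4). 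Your expectation that $\|\nabla w_\epsilon\|=o(\epsilon)$ is not available at the outset: with the standard Struwe/Bahri--Coron decomposition one only has $\|\nabla w_\epsilon\|=O(\lambda^{-1/2})\sim\epsilon^{1/2}$.

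Second, and more seriously: the decomposition that the profile decomposition actually furnishes is $u_\epsilon=\alpha(PU_{x,\lambda}+w)$ with $w\perp T_{x,\lambda}$. Once you replace $PU$ by the corrected profile $\psi_{x,\lambda}$, the new remainder $q=w+\lambda^{-1/2}(H_a-H_0)$ is \emph{not} orthogonal to $T_{x,\lambda}$; its projection $\Pi_{x,\lambda}q=\lambda^{-1/2}\Pi_{x,\lambda}(H_a-H_0)$ is of order $\lambda^{-1}\sim\epsilon$ and not smaller. This lies in the directions where coercivity gives nothing. You cannot simply re-center the decomposition (i.e.\ perturb $(x,\lambda)$) to absorb it, because then $w$ would lose its orthogonality. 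The paper's resolution (Lemma 6.3 and the explicit computations \eqref{eq:n1}--\eqref{eq:d1}) is an exact cancellation: the $O(\epsilon)$ and $O(\epsilon^2)$ contributions of $\Pi_{x,\lambda}q$ to the numerator $N_1$ and denominator $D_1$ match to within $o(\epsilon^2)$, so the tangential part has no effect on the quotient to the required order. This cancellation is the central technical point of the lower bound and it is invisible in a Lyapunov--Schmidt argument that treats the tangential directions as merely ``projected out''. Without it your proof would produce an uncontrolled $O(\epsilon^2)$ error in both numerator and denominator and would not recover the sharp constant in \eqref{S-lim}.
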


We supplement this theorem with a result for the opposite case where $\mathcal N_a(V)=\emptyset$.

\begin{thm}
\label{thm-main2}
Assume that $\mathcal N_a(V)=\emptyset$. Then $S(a+\epsilon V) = S + o(\epsilon^2)$ as $\epsilon \to 0+$. If, in addition, $Q_V(x) > 0$ for all $x\in \mathcal N_a$, then $S(a+\epsilon V) = S$ for all sufficiently small $\epsilon >0$.
\end{thm}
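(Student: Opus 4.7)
The plan is to specialize the trial-function upper bound and the blow-up lower bound developed for Theorem~\ref{thm-main} to the degenerate case $\mathcal{N}_a(V)=\emptyset$. The central input borrowed from Theorem~\ref{thm-main}'s proof is an asymptotic expansion of the Rayleigh quotient of the $H^1_0(\Omega)$-projected Aubin--Talenti bubble $PU_{\delta,x_0}$, of scale $\delta>0$ concentrated at $x_0\in\Omega$, of the schematic form
\[
\mathcal{S}_{a+\epsilon V}[PU_{\delta,x_0}] = S + A\,\phi_a(x_0)\,\delta + B\,a(x_0)\,\delta^2 + C\,\epsilon\,\delta\,Q_V(x_0) + o(\delta^2+\epsilon\delta+\epsilon^2),
\]
in which $A,C>0$ and the sign of $B$ is such that $Ba(x)\geq 0$ on $\mathcal{N}_a$ --- otherwise this very expansion would violate $S(a)=S$. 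Minimizing the $\delta$-quadratic at an $x_0\in\mathcal{N}_a$ with $Q_V(x_0)<0$ reproduces the supremum in~\eqref{S-lim}.

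For the upper bound $S(a+\epsilon V)\leq S+o(\epsilon^2)$, pick any $x_0\in\mathcal{N}_a$ (non-empty by criticality of $a$), so that $\phi_a(x_0)=0$, and choose the scale $\delta_\epsilon\to 0$ strictly faster than $\epsilon$ --- for instance $\delta_\epsilon=\epsilon^2$. Every explicit term in the expansion then falls into $o(\epsilon^2)$, irrespective of the signs of $a(x_0)$ and $Q_V(x_0)$.

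For the matching lower bound $S(a+\epsilon V)\geq S-o(\epsilon^2)$, I argue by contradiction using the blow-up analysis underlying the lower bound of Theorem~\ref{thm-main}. Suppose $(S(a+\epsilon_n V)-S)/\epsilon_n^2\leq -c<0$ along some $\epsilon_n\to 0+$; then $S(a+\epsilon_n V)<S$ and Br\'ezis--Nirenberg compactness yields minimizers $u_n$. After rescaling they concentrate at a point $x_\ast\in\mathcal{N}_a$ with $a(x_\ast)<0$ (by assumption~\eqref{eq:anegative}) at a scale $\delta_n\sim\epsilon_n$. Inserting into the expansion and using (i) $\phi_a\geq 0$ so that $A\phi_a(x_n)\delta_n\geq 0$, (ii) $Ba(x_n)\geq 0$ by the consistency argument recalled above, and (iii) $Q_V(x_\ast)\geq 0$ since $\mathcal{N}_a(V)=\emptyset$, we find that the three leading contributions are all non-negative and $\mathcal{S}_{a+\epsilon_n V}[u_n]-S\geq -o(\epsilon_n^2)$, contradicting the assumed negative rate.

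For the sharper equality $S(a+\epsilon V)=S$ under $Q_V>0$ on $\mathcal{N}_a$, the same contradiction becomes strict: by continuity and compactness of $\mathcal{N}_a$ one has $Q_V(x_\ast)>0$, so the term $C\epsilon_n\delta_n Q_V(x_\ast)$ is strictly of leading order $\epsilon_n^2$ (recall $\delta_n\sim\epsilon_n$), dominates the remainder, and forces $\mathcal{S}_{a+\epsilon_n V}[u_n]>S$ --- ruling out $S(a+\epsilon_n V)<S$ altogether for all small $\epsilon_n>0$. The principal technical obstacle is the blow-up analysis itself: proving that almost-minimizers do concentrate at a point of $\mathcal{N}_a\cap\Omega$, that the scale behaves as $\delta_n\sim\epsilon_n$, and that the remainder in the energy expansion is sharp enough --- uniformly in the relevant parameters --- for the sign bookkeeping to carry the day.
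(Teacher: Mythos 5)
Your strategy---a pointwise energy expansion on near-bubbles, the blow-up structure of almost minimizers, and the sign conditions $\phi_a\geq 0$, $a<0$ on $\mathcal{N}_a$ (from \eqref{eq:anegative}), and $Q_V\geq 0$ on $\mathcal{N}_a$---is exactly the paper's. The genuine gap is that you repeatedly lean on the concentration-scale relation $\delta_n\sim\epsilon_n$: once to place the remainder in $o(\epsilon_n^2)$ in the first part, and once to claim that $C\epsilon_n\delta_n Q_V(x_*)$ ``dominates the remainder'' in the second. But this relation is neither established nor available when $\mathcal{N}_a(V)=\emptyset$: if $Q_V(x_*)=0$ the two explicit terms in the expansion need not balance at any particular scale, and the a priori estimates (Propositions~\ref{wbound} and~\ref{thm-q}) only give $\lambda d\to\infty$ and $\lambda(S-S(a+\epsilon V))=o(1)$ without pinning $\lambda$ against $\epsilon$. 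Indeed, the relation $\lambda^{-1}\sim\epsilon$ is only derived in the proof of Theorem~\ref{thm-minimizers}, which uses $Q_V(x_0)<0$, i.e.\ $\mathcal N_a(V)\neq\emptyset$. The paper deliberately avoids any such commitment: inequality \eqref{eq:proofmain} is derived \emph{before} the scale is identified, and the remainders $o(\lambda^{-2})+o(\epsilon\lambda^{-1})$ are disposed of by the elementary absorption $o(\epsilon\lambda^{-1})\geq -\eta\lambda^{-2}+o(\epsilon^2)$ for any fixed small $\eta>0$, after which $-\eta\lambda^{-2}$ is swallowed by the strictly positive coefficient $(S/3)^{-1/2}2\pi^2|a(x_0)|$ of $\lambda^{-2}$. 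This absorption is the actual content of the proof and is where assumption \eqref{eq:anegative} enters; your sketch needs to make it explicit. The same absorption handles the $Q_V(x_0)>0$ case without any scale information, since then both $\epsilon\lambda^{-1}$ and $\lambda^{-2}$ carry strictly positive coefficients.

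Two smaller points. The ``upper bound'' paragraph is superfluous: $S(a+\epsilon V)\leq S$ always holds (concentrate a bubble at any fixed $x\in\Omega$), so the theorem's first assertion reduces entirely to the lower bound $S-S(a+\epsilon V)=o(\epsilon^2)$. And for the second assertion, the paper does not conclude directly from the contradiction but invokes the concavity of $\epsilon\mapsto S(a+\epsilon V)$ (established in the proof of Corollary~\ref{thm-upperb}) to pass from the falsity of \eqref{appr-min0} to $S(a+\epsilon V)=S$ for all sufficiently small $\epsilon$; if you phrase your contradiction to cover every sequence $\epsilon_n\to 0$ with $S(a+\epsilon_n V)<S$ this step can be bypassed, but you should be explicit about which formulation you use.
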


It follows from the above two theorems that the condition $\mathcal N_a(V)\neq \emptyset$ is 'almost' necessary for the inequality $S(a+\epsilon V) <S$ for all small $\epsilon>0$. Only the case where $\min_{\mathcal N_a} Q_V=0$ is left open.

\begin{example}
When $\Omega=B$ is the unit ball in $\R^3$, then it is well-known that the constant function $a=-\pi^2/4$ is critical and that in this case $\mathcal N_a=\{0\}$ and $G_a(0,y)=|y|^{-1}\cos(\pi|y|/2)$; see, e.g., \cite{Br}. Thus, with
$$
q_V := Q_V(0) = \int_B V(y)\, \frac{\cos^2(\pi|y|/2)}{|y|^2}\,dy
$$
we have
$$
\lim_{\epsilon\to 0+} \frac{S(a+\epsilon V)-S}{\epsilon^2} = - \left( \frac 3S \right)^\frac12 \frac{1}{2\pi^4}  \, q_V^2
\qquad\text{if}\ q_V\leq 0
$$
and $S(a+\epsilon V)=S$ for all sufficiently small $\epsilon>0$ if $q_V>0$.
\end{example}

\begin{rem}
It is instructive to compare our results here with the results for the analogous problem
$$
S(\epsilon V) := \inf_{0\not\equiv u\in H^1_0(\Omega)} \frac{\int_\Omega (|\nabla u|^2+\epsilon Vu^2)\,dx}{\left( \int_\Omega |u|^{2N/(N-2)}\,dx \right)^{(N-2)/N}}
$$
in dimension $N\geq 4$. Let $S_N$ be the sharp constant in the Sobolev inequality in $\R^N$. From \cite{BrNi} we know that $S(\epsilon V)<S_N$ if and only if $V(x) <0$ for some $x\in \Omega$, and therefore we focus on the case where $\mathcal N(V):=\{ x\in\Omega:\ V(x)<0\}\neq\emptyset$. Then 
\begin{align} 
S(\epsilon V)  & =  S_N - C_N \sup_{x\in\mathcal N(V)} \frac{|V(x)|^{\frac{N-2}{N-4}}}{\phi_0(x)^\frac{2}{N-4}} 
\ \epsilon^{\frac{N-2}{N-4}} + o(\epsilon^{\frac{N-2}{N-4}}) &\!\! \text{\rm if}  \ \ N\geq 5 \,, \label{ngeq-5}\\
S(\epsilon V)  & = S_N - \exp\Big( - \frac 4\epsilon \left(1 +o(1)\right) \inf_{x\in\mathcal N(V)} \frac{\phi_0(x)}{|V(x)|}  \Big)  &  \text{  \ \ if}  \  N=4 \,,  \label{n4}
\end{align}
with explicit constants $C_N$ depending only on $N$. Note that, as a reflection of the Br\'ezis--Nirenberg phenomenon, $V$ enters pointwisely into the asymptotic coefficient in \eqref{ngeq-5} and \eqref{n4}, while it enters non-locally through $Q_V$ into the asymptotic coefficient in Theorem \ref{thm-main}.
\end{rem}

Asymptotics \eqref{ngeq-5} and \eqref{n4} in the case where $V$ is a negative constant are essentially contained in \cite{Tak}; see also \cite{We} for related results. The case of general $V\in C(\overline\Omega)$ can be treated by similar methods. We emphasize that the proof of Theorem \ref{thm-main} is considerably more complicated than that of \eqref{ngeq-5} and \eqref{n4}, since the expansion in Theorem \ref{thm-main} should rather be thought of as a higher order expansion of $S(a+\epsilon V)-S$ where the coefficient of the term of order $\epsilon$ vanishes due to criticality. In the higher dimensional context, no such cancellation occurs. 


\subsection{Behavior of almost minimizers}

We prove Theorems \ref{thm-main} and \ref{thm-main2} by proving upper and lower bounds on $S(a+\epsilon V)$. For the upper bound it suffices to evaluate $\mathcal S_{a+\epsilon V}[u_\epsilon]$ for an appropriately chosen family of functions $u_\epsilon$. For the lower bound we need to evaluate the same quantity where now $u_\epsilon$ is an optimizer for $S(a+\epsilon V)$. To do so, we will show that $u_\epsilon$ is essentially of the same form as the family chosen to prove the upper bound. In fact, we will not use the minimality of the $u_\epsilon$ and show that, more generally, all `almost minimizers' have essentially the same form as the functions chosen for the upper bound.

Given earlier works and, in particular, those by Druet \cite{dr} and Esposito \cite{esp} it is not surprising that almost minimizers concentrate at a point in the set $\mathcal N_a$. One of our new contributions is to show that this concentration happens at a point in the subset $\mathcal N_a(V)$ and, more precisely, at a point in $\mathcal N_a(V)$ where the supremum in \eqref{S-lim} is attained.

In order to state our theorem about almost minimizers, for $x \in \Omega$ and $\lambda > 0$, let
$$
U_{x, \lambda}(y) := \frac{\lambda^{1/2}}{(1 + \lambda^2|y-x|^2)^{1/2}} \,.
$$
The functions $U_{x,\lambda}$ and their multiples are precisely the optimizers of the Sobolev inequality \eqref{eq:sobolev}; see the references mentioned above and \cite[Cor. I.1]{Lions}. We introduce $PU_{x, \lambda} \in H^1_0(\Omega)$ as the unique function satisfying 
\begin{equation} \label{eq-pu}
\Delta PU_{x,\lambda} = \Delta U_{x,\lambda}\ \ \  \text{ in } \Omega, \qquad PU_{x,\lambda} = 0 \ \ \ \text{ on } \partial \Omega \,.
\end{equation}
Moreover, let
$$
T_{x, \lambda} := \text{ span}\, \big\{ PU_{x, \lambda}, \partial_\lambda PU_{x, \lambda}, \partial_{x_i} PU_{x, \lambda}\,  (i=1,2,3) \big\}
$$
and let $T_{x, \lambda}^\perp$ be the orthogonal complement of $T_{x,\lambda}$ in $H^1_0(\Omega)$ with respect to the inner product $\int_\Omega \nabla u \cdot \nabla v\,dy$. Finally, by $\Pi_{x,\lambda}$ and $\Pi_{x,\lambda}^\bot$ we denote the orthogonal projections in $H^1_0(\Omega)$ onto $T_{x,\lambda}$ and $T_{x,\lambda}^\bot$, respectively.

\begin{thm} \label{thm-minimizers}
Assume that $\mathcal N_a(V)\neq \emptyset$. Let $(u_\epsilon)\subset H^1_0(\Omega)$ be a family of functions such that
\begin{equation} \label{appr-min}
\lim_{\epsilon\to 0} \frac{\mathcal S_{a+\epsilon V}[u_\epsilon] - S(a+\epsilon V)}{S-S(a+\epsilon V)}  = 0 \qquad \text{and} \qquad \int_\Omega u_\epsilon^6 \,dx = \left( \frac S3\right)^{\frac 32} \,.
\end{equation}
Then there are $(x_\epsilon)\subset\Omega$, $(\lambda_\epsilon)\subset(0,\infty)$ and $(\alpha_\epsilon)\subset\R$ such that
\begin{equation} \label{u-eps-final}
u_\epsilon =  \alpha_\epsilon \left( PU_{x_\epsilon, \lambda_\epsilon} - \lambda_\epsilon^{-1/2}\, \Pi_{x_\epsilon,\lambda_\epsilon}^\bot (H_a(x_\epsilon, \cdot)- H_0(x_\epsilon, \cdot)) + r_\epsilon \right)
\end{equation}
and, along a subsequence,
\begin{align*}
x_\epsilon & \to x_0
\qquad\text{for some}\ x_0\in\mathcal N_a(V) \ \text{with}\ \ \frac{Q_V(x_0)^2}{|a(x_0)|} = \sup_{y \in \mathcal N_a(V)} \frac{Q_V(y)^2}{|a(y)|} \,, \\
\phi_a(x_\epsilon) & = o(\epsilon) \,,\\
\lim_{\epsilon \to 0}\,  \epsilon \,  \lambda_\epsilon & = 4\pi^2\, \frac{|a(x_0)|}{|Q_V(x_0)|} \,, \\
\alpha_\epsilon & = s + \mathcal O(\epsilon)
\qquad\text{for some}\ s\in\{\pm 1\} \,.
\end{align*}
Finally, $r_\epsilon\in T_{x_\epsilon,\lambda_\epsilon}^\bot$ and $\|\nabla r_\epsilon\|=o(\epsilon)$.
\end{thm}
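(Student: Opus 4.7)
The plan is to prove Theorem~\ref{thm-minimizers} in four stages: a preliminary bubble decomposition, a refinement of the ansatz via Lyapunov--Schmidt, a sharp energy expansion, and identification of the concentration parameters by matching with Theorem~\ref{thm-main}. First I would apply a Struwe-type profile decomposition to the family $u_\epsilon$. Since by \eqref{appr-min} and Theorem~\ref{thm-main} one has $\mathcal S_{a+\epsilon V}[u_\epsilon]\to S$ under the normalization $\int u_\epsilon^6=(S/3)^{3/2}$, standard concentration-compactness arguments yield, along a subsequence, a single-bubble decomposition $u_\epsilon=\alpha_\epsilon(PU_{\tilde x_\epsilon,\tilde\lambda_\epsilon}+\tilde w_\epsilon)$ with $\tilde x_\epsilon\to \bar x$, $\tilde\lambda_\epsilon\to\infty$, $\alpha_\epsilon\to s\in\{\pm1\}$, and $\tilde w_\epsilon\in T^\perp_{\tilde x_\epsilon,\tilde\lambda_\epsilon}$ with $\|\nabla\tilde w_\epsilon\|\to 0$. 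A Pohozaev-type boundary argument, together with $S(a+\epsilon V)<S$, rules out $\bar x\in\partial\Omega$.

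Next I would refine this decomposition into the form \eqref{u-eps-final} by a Lyapunov--Schmidt reduction. The motivation comes from Rey's expansion $PU_{x,\lambda}=U_{x,\lambda}-\lambda^{-1/2}H_0(x,\cdot)+O(\lambda^{-5/2})$: the leading regular part built into $PU$ is $H_0$, associated with $-\Delta$, whereas the problem involves $-\Delta+a$, whose Green's function has regular part $H_a$. Subtracting $\lambda^{-1/2}\Pi^\perp_{x,\lambda}(H_a-H_0)$ from $PU_{x,\lambda}$ effectively replaces $H_0$ by $H_a$ and cancels the largest piece of the residual $(-\Delta+a)PU-3\,PU^5$ projected onto $T^\perp$. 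The implicit function theorem then yields a unique decomposition of the form \eqref{u-eps-final} with $r_\epsilon\in T^\perp_{x_\epsilon,\lambda_\epsilon}$ and $\|\nabla r_\epsilon\|\to 0$; the $L^6$-constraint on $u_\epsilon$ forces $\alpha_\epsilon=s+\mathcal O(\epsilon)$.

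With this ansatz in hand, I would compute a sharp energy expansion of the form
\[
\mathcal S_{a+\epsilon V}[u_\epsilon]-S = \frac{A\,\phi_a(x_\epsilon)}{\lambda_\epsilon} + \frac{B\,\epsilon\,Q_V(x_\epsilon)}{\lambda_\epsilon} + \frac{C\,a(x_\epsilon)}{\lambda_\epsilon^2} + \mathcal Q[r_\epsilon] + o(\epsilon^2),
\]
with explicit nonzero constants $A,B,C$, and $\mathcal Q$ the second-variation quadratic form at the bubble, strictly coercive on $T^\perp$ by a Bianchi--Egnell-type estimate. The $\phi_a/\lambda$ term arises from $\int a\,PU^2$ at leading order combined with the $H_a-H_0$ correction, collapsing to $\phi_a(x_\epsilon)/\lambda_\epsilon$ by the definition of $H_a$; the $\epsilon Q_V/\lambda$ term comes from $\epsilon\int V\,PU_{x,\lambda}^2\sim 4\pi\,\epsilon\,\lambda^{-1}\int V(y)G_a(x,y)^2\,dy$, via the Green's-function representation of $PU^2$ at the relevant order; the $a/\lambda^2$ term is the next contribution of $\int a\,PU^2$ at points where $\phi_a$ vanishes. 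Combining the expansion with $\mathcal S_{a+\epsilon V}[u_\epsilon]=S(a+\epsilon V)+o(\epsilon^2)$ and the asymptotics of Theorem~\ref{thm-main}, coercivity of $\mathcal Q$ yields $\|\nabla r_\epsilon\|=o(\epsilon)$; the $O(\lambda^{-1})$ contribution must be $o(\epsilon^2)$, hence $\phi_a(x_\epsilon)=o(\epsilon)$ and any limit point $x_0$ of $(x_\epsilon)$ lies in $\mathcal N_a$; minimizing the surviving expression in the rescaled variable $\mu=\epsilon\lambda$ pins down $\mu_\epsilon\to 4\pi^2|a(x_0)|/|Q_V(x_0)|$; matching the optimal value with the supremum in \eqref{S-lim} forces $x_0\in\mathcal N_a(V)$ to attain that supremum. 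The two signs $s=\pm 1$ reflect the $u\mapsto -u$ symmetry of $\mathcal S_{a+\epsilon V}$.

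The main obstacle is the third step: obtaining the expansion uniformly in $(x_\epsilon,\lambda_\epsilon,\epsilon)$ and to the precise order $o(\epsilon^2)$ requires careful tracking of all cross terms between $PU$, $\lambda^{-1/2}\Pi^\perp(H_a-H_0)$, $a\cdot PU^2$, and $\epsilon V\cdot PU^2$, as well as computation of $\int_\Omega a\,PU^2$ to order $\lambda^{-2}$. The cancellation of the $\lambda^{-1}$ contribution is enforced by the criticality of $a$ and the vanishing of $\phi_a$ at $x_0$, and isolating the surviving $a(x_\epsilon)/\lambda^2$ term requires splitting the integral into a near region, where $a$ is expanded via $a\in C^1(\Omega)$, and a far region handled by pointwise bounds on $PU_{x,\lambda}$ and on $H_a-H_0$, exploiting the assumption $a(x_0)<0$.
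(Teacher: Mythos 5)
Your overall strategy---single-bubble profile decomposition, refine the ansatz, sharp energy expansion, match with the upper bound---matches the architecture of the paper's proof (which goes through Proposition~\ref{prop-app-min}, Propositions~\ref{wbound} and~\ref{thm-q}, Lemma~\ref{expfinalq}, and the final matching in Section~6). But there are two genuine gaps.

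First, and most seriously, your claimed expansion
\[
\mathcal S_{a+\epsilon V}[u_\epsilon]-S = \frac{A\,\phi_a(x_\epsilon)}{\lambda_\epsilon} + \frac{B\,\epsilon\,Q_V(x_\epsilon)}{\lambda_\epsilon} + \frac{C\,a(x_\epsilon)}{\lambda_\epsilon^2} + \mathcal Q[r_\epsilon] + o(\epsilon^2)
\]
ignores the contribution of the piece $\lambda_\epsilon^{-1/2}\Pi_{x_\epsilon,\lambda_\epsilon}(H_a(x_\epsilon,\cdot)-H_0(x_\epsilon,\cdot))$, which is the difference between the ansatz $PU_{x,\lambda}-\lambda^{-1/2}\Pi^\perp_{x,\lambda}(H_a-H_0)$ appearing in \eqref{u-eps-final} and the simpler trial function $\psi_{x,\lambda}=PU_{x,\lambda}-\lambda^{-1/2}(H_a-H_0)$ whose quotient one actually knows how to expand. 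That projected piece is of order $\lambda^{-1}\sim\epsilon$, \emph{not} smaller (see Remark~\ref{coeffsize}), so it contributes terms of order $\lambda^{-1}$ and $\lambda^{-2}$ to both the numerator and the denominator of $\mathcal S_{a+\epsilon V}$. These do not disappear on their own; the paper's Lemma~\ref{expfinalq} proves a non-obvious cancellation between those contributions when one forms the quotient, using the explicit computations of $N_1$ and $D_1$ in \eqref{eq:n1}--\eqref{eq:d1}. Without this cancellation your step 3 has an unaccounted $O(\epsilon)$ error, which destroys the ability to read off $\phi_a(x_\epsilon)=o(\epsilon)$, $\|\nabla r_\epsilon\|=o(\epsilon)$, and the location of $x_0$.

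Second, you frame the refinement of the bubble decomposition as a Lyapunov--Schmidt / implicit-function-theorem step ``cancelling the largest piece of the residual $(-\Delta+a)PU-3PU^5$.'' But $u_\epsilon$ is a \emph{given} function, not constructed: the Bahri--Coron decomposition already provides $u_\epsilon=\alpha(PU_{x,\lambda}+w)$ with $w\in T^\perp_{x,\lambda}$, and \eqref{u-eps-final} is just a rewriting of $w$ as $-\lambda^{-1/2}\Pi^\perp(H_a-H_0)+r$. The hard content is proving $\|\nabla r\|=o(\epsilon)$, which in the paper requires three iterated passes of (expansion + coercivity): first $\|\nabla w\|=O(\lambda^{-1/2})$ (Proposition~\ref{wbound}), then $w\approx-\lambda^{-1/2}(H_a-H_0)$ with $o(\lambda^{-1/2})$ error via completing the square and a spectral cutoff (Proposition~\ref{thm-q}), and only then the final coercivity step for $r$ (Lemma~\ref{lem-g}). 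Each refined expansion is only justified by the a priori bound obtained in the previous pass, so your proposal to get the sharp expansion in a single shot presupposes bounds that must first be established by coarser intermediate expansions.
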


The $L^6$ normalization in \eqref{appr-min} is chosen in view of
$$
\int_{\R^3} U_{x,\lambda}^6\,dy = \left( \frac S3\right)^{\frac 32} \,.
$$

There is a huge literature on blow-up results for solutions of equations involving the critical Sobolev exponent. Early contributions related to the problem we are considering are, for instance, \cite{AtPe,Bu,BrPe,Ha,rey1}; see also the book \cite{DrHeRo} for more recent developments and further references. Here we follow a somewhat different philosophy and focus not on the equation satisfied by the minimizers, but solely on their minimality property. Therefore our proofs also apply to almost minimizers in the sense of \eqref{appr-min} and we obtain blow-up results for those as well. On the other hand, with our methods we cannot say anything about non-minimizing solutions of the corresponding equation and our blow-up bounds are only obtained in $H^1$ instead of $L^\infty$ norm. Other related works which study Sobolev critical problems from a variational point of view are, for instance, \cite{FlGaMu,AmGa,Fl}.

As already mentioned before, the works of Druet \cite{dr} and Esposito \cite{esp}, and similarly \cite{FlGaMu,AmGa} in related problems, show that concentration happens at a point in $\mathcal N_a$. In terms of $S(a+\epsilon V)$, this corresponds essentially to the fact that $S(a+\epsilon V)= S + o(\epsilon)$. In order to go further than that and to compute the coefficient of $\epsilon^2$, we need to prove that concentration happens in the subset $\mathcal N_a(V)$ at a point where the supremum in \eqref{S-lim} is attained.

The strategy of the proof of the lower bound is to expand the quotient $\mathcal S_{a+\epsilon V}[u_\epsilon]$ for an almost minimizer $u_\epsilon$ as precisely as allowed by the available information on $u_\epsilon$, then to use a coercivity bound to deduce that certain terms are small and thereby improving our knowledge about $u_\epsilon$. We repeat this procedure three times (namely, in Sections \ref{sec:apriori}, \ref{sec-reloaded} and \ref{sec-new-decomp}). Therefore, a key tool in our analysis is the coercivity of the quadratic form
$$
\int_\Omega (|\nabla v|^2 + av^2 -15\, U_{x,\lambda}^4 v^2)\,dx \,,\qquad v\in T_{x,\lambda}^\bot \,,
$$
provided that $\lambda\,\mathrm{dist}(x,\partial\Omega)$ is sufficiently large; see Lemma \ref{coercivity}. This coercivity was proved by Esposito \cite{esp} and comes ultimately from the non-degeneracy of the Sobolev minimizer $U_{x,\lambda}$. Esposito used this bound to obtain an a priori bound on the term $\alpha_\epsilon^{-1} u_\epsilon - PU_{x_\epsilon,\lambda_\epsilon}$ in Theorem \ref{thm-minimizers}. We will use it for the same purpose in Proposition \ref{wbound}, but then we will use it two more times in Propositions \ref{thm-q} and in Lemma \ref{lem-g} in order to get bounds on $\alpha_\epsilon^{-1} u_\epsilon - PU_{x_\epsilon,\lambda_\epsilon}+\lambda^{-1/2} (H_a(x_\epsilon,\cdot)-H_0(x_\epsilon,\cdot))$ and $\alpha_\epsilon^{-1} u_\epsilon - PU_{x_\epsilon,\lambda_\epsilon}+\lambda^{-1/2}\, \Pi_{x,\lambda}^\bot (H_a(x_\epsilon,\cdot)-H_0(x_\epsilon,\cdot))$, respectively. After the last step we are able to compute the energy to within $o(\epsilon^2)$. We emphasize that in principle there is nothing preventing us from continuing this procedure and computing the energy to even higher precision.

Let us briefly comment on a surprising technical subtlety in our proof. While Theorem \ref{thm-minimizers} says that almost minimizers are essentially given by
$$
PU_{x, \lambda} - \lambda^{-1/2}\, \Pi_{x,\lambda}^\bot (H_a(x, \cdot)- H_0(x, \cdot))
$$
with $x\in\mathcal N_a(V)$ a maximum point for the right side in \eqref{S-lim} and $\lambda$ proportional to $\epsilon^{-1}$, to prove the upper bound we use the simpler functions
$$
PU_{x, \lambda} - \lambda^{-1/2} (H_a(x, \cdot)- H_0(x, \cdot))
$$
(with the same choices of $x$ and $\lambda$). The difference between the two functions, namely
$$
-\lambda^{-1/2}\, \Pi_{x,\lambda}(H_a(x,\cdot)-H_0(x,\cdot)) \,,
$$
can be shown to be of order $\epsilon$ (when $\lambda$ is proportional to $\epsilon^{-1}$), but not smaller; see Remark \ref{coeffsize}. Therefore it is not at all obvious that the two families of functions lead to the same (within $o(\epsilon^2)$) value of $\mathcal S_{a+\epsilon V}[\cdot]$. The fact that they do is contained in Lemma \ref{expfinalq}, where the contributions of $-\lambda^{-1/2}\, \Pi_{x,\lambda}(H_a(x,\cdot)-H_0(x,\cdot))$ to the numerator and to the denominator are shown to cancel each other to within $o(\epsilon^2)$.

At first sight, the problem considered in this paper resembles the problem of minimizing the quotient $\int_{\R^N} (|\nabla u|^p + \epsilon V |u|^p)\,dx / \int_{\R^N} |u|^p\,dx$ for $p\leq N$, which is a classical problem for $p=2$ \cite{Si} motivated by quantum mechanics and which was studied in \cite{EkFrKo} for general $p$. The underlying mechanism, however, is rather different. In these works almost minimizers spread out, whereas here they concentrate. The concentration regime is much more sensitive to the local details of the perturbation and necessitates, in particular, the use of orthogonality conditions  in $T_{x,\lambda}^\bot$ and the resulting coercivity.



\subsection{Notation}

Given a set $M$ and two functions $f_1,\, f_2:M\to\R$, we write $f_1(m) \lesssim f_2(m)$ if there is a numerical constant $c$ such that $f_1(m) \leq c\, f_2(m)$ for all $m\in M$. The symbol $\gtrsim$ is defined analogously. For any $p\in [1,\infty]$ and $u\in L^p(\Omega)$ we denote 
$$
\|u\|_p = \|u\|_{L^p(\Omega)}. 
$$
If $p = 2$, we typically drop the subscript and write $\|u\| = \|u\|_{L^2(\Omega)}$.


\section{\bf Upper bound on $S(a+\epsilon V)$}
\label{sec-upperb}

Recall that we always work under Assumption \ref{ass}. In this section (and only in this section), however, we do \emph{not} assume \eqref{eq:anegative}.

\subsection{Statement of the bounds and consequences}

Our goal in this section is to prove an upper bound on $S(a+\epsilon V)$ by evaluating the quotient $\mathcal S_{a+\epsilon V}[\cdot]$ on a certain family of trial functions. For $x\in\Omega$ and $\lambda>0$, let
\begin{equation}\label{eq:psi}
\psi_{x, \lambda}(y) := PU_{x, \lambda}(y) -  \lambda^{-1/2} (H_a(x, y) - H_0(x, y)) \,.
\end{equation}
This function belongs to $H^1_0(\Omega)$. We shall prove the following expansions.

\begin{thm}\label{exppsi}
As $\lambda\to\infty$, uniformly for $x$ in compact subsets of $\Omega$ and for $\epsilon\geq 0$,
\begin{align}
\label{eq:uppern}
\int_\Omega \left( |\nabla\psi_{x,\lambda}|^2 + (a+\epsilon V)\psi_{x,\lambda}^2\right)dy
& = 3 \left( \frac{S}{3}\right)^{\frac{3}{2}} - 4\pi\, \phi_a(x)\,\lambda^{-1}
+ 2\pi(4-\pi)\,a(x)\, \lambda^{-2} + \frac{\eps}{\lambda} Q_V(x) \nonumber \\
& \quad  + o(\lambda^{-2})  +o( \epsilon\lambda^{-1})
\end{align}
and
\begin{equation}
\label{eq:upperd}
\int_\Omega \psi_{x,\lambda}^6\,dy = \left( \frac{S}{3}\right)^{\frac{3}{2}} -8\pi \phi_a(x) \lambda^{-1} + 8\pi \, a(x)\, \lambda^{-2} +15\pi^2\, \phi_a(x)^2\,\lambda^{-2} + o(\lambda^{-2}) \,.
\end{equation}
In particular,
\begin{align}\label{eq:upperquo}
\mathcal S_{a+\epsilon V}[\psi_{x,\lambda}]  = S &+ \left( \frac{S}{3}\right)^{-\frac{1}{2}} 4\pi\,\phi_a(x)\,\lambda^{-1} \notag \\
&  + \left( \frac{S}{3}\right)^{-\frac{1}{2}}  \left( \frac{\eps}{\lambda} Q_V(x) - 2 \pi^2\, a(x)\, \lambda^{-2} - (15\pi^2 -128)\,\phi_a(x)^2\, \lambda^{-2} \right) \notag \\
&  + o(\lambda^{-2}) +o(\eps\lambda^{-1}) \,.
\end{align}
\end{thm}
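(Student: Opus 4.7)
The plan is to compute the numerator and denominator of $\mathcal S_{a+\epsilon V}[\psi_{x,\lambda}]$ separately to order $\lambda^{-2}$ (respectively $\epsilon\lambda^{-1}$), and then assemble \eqref{eq:upperquo} by expanding the quotient. Three inputs are used throughout: the classical Rey expansion $PU_{x,\lambda}(y) = U_{x,\lambda}(y) - \lambda^{-1/2}H_0(x,y) + \mathcal O(\lambda^{-5/2})$ uniformly on compacts of $\Omega$ (cf.\ \cite{rey2}), which gives $\psi_{x,\lambda} = U_{x,\lambda} - \lambda^{-1/2}H_a(x,\cdot) + \mathcal O(\lambda^{-5/2})$; the Coulomb-type expansion of the regular part of the Green's function,
\[
H_a(x,y) = \phi_a(x) - \tfrac{a(x)}{2}|y-x| + o(|y-x|) \qquad\text{as } y\to x,
\]
which follows from $-\Delta_y H_a(x,y) = a(y)G_a(x,y) \sim a(x)/|y-x|$ near the diagonal combined with the identity $-\Delta\bigl(-\tfrac{a(x)}{2}|y-x|\bigr) = a(x)/|y-x|$; and the elementary integrals (from rescaling $z = \lambda(y-x)$) $\int_{\R^3}U_{x,\lambda}^6\,dy = (S/3)^{3/2}$, $\int_{\R^3}U_{x,\lambda}^5\,dy = (4\pi/3)\lambda^{-1/2}$, $\int_{\R^3}U_{x,\lambda}^4\,dy = \pi^2\lambda^{-1}$, $\int_{\R^3}U_{x,\lambda}^5|y-x|\,dy = (8\pi/3)\lambda^{-3/2}$.

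For the numerator, integration by parts combined with $-\Delta PU_{x,\lambda} = 3U_{x,\lambda}^5$, $-\Delta_y H_a(x,y) = a(y)G_a(x,y)$ (immediate from the PDE for $G_a$) and $-\Delta_y H_0 = 0$ yields
\[
(-\Delta+a)\psi_{x,\lambda}(y) = 3U_{x,\lambda}(y)^5 + a(y)\bigl(PU_{x,\lambda}(y) - \lambda^{-1/2}G_0(x,y)\bigr),
\]
so that the $a$-part splits as $3\int_\Omega U^5\psi\,dy + \int_\Omega a(PU - \lambda^{-1/2}G_0)\psi\,dy$. The first integral evaluates, via the Rey and Coulomb expansions, to $3(S/3)^{3/2} - 4\pi\phi_a(x)\lambda^{-1} + 4\pi a(x)\lambda^{-2} + o(\lambda^{-2})$, the $a(x)\lambda^{-2}$ piece coming from the $|y-x|$-correction to $H_a$ via $\int U^5|y-x|\,dy$. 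In the second integral, $PU - \lambda^{-1/2}G_0 = U - \lambda^{-1/2}|y-x|^{-1} + \mathcal O(\lambda^{-5/2})$ is highly concentrated near $y=x$; after Taylor-expanding $a$ around $x$, the main contribution is $a(x)\int_\Omega(U^2 - \lambda^{-1/2}U/|y-x|)\,dy$, and a direct rescaled computation shows that the $\mathcal O(\delta\lambda^{-1})$ terms from $\int U^2$ and $\lambda^{-1/2}\int U/|y-x|$ cancel exactly, leaving $(4\pi - 2\pi^2)a(x)\lambda^{-2}$. Together these give the coefficient $4\pi + 4\pi - 2\pi^2 = 2\pi(4-\pi)$ in \eqref{eq:uppern}. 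The $\epsilon V$-part $\epsilon\int V\psi^2$ is handled by splitting into regions $|y-x|<\delta$ and $|y-x|\geq\delta$: in the outer region $\psi^2 \to \lambda^{-1}G_a(x,\cdot)^2$, while in the inner region the leading $\lambda^{-1}/|y-x|^2$ behavior of $\psi^2$ matches that of $\lambda^{-1}G_a(x,\cdot)^2$; letting $\delta$ tend to zero slowly produces $\epsilon\int V\psi^2 = (\epsilon/\lambda)Q_V(x) + o(\epsilon/\lambda)$ using only $V\in L^\infty$.

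For the denominator, the binomial expansion $\psi_{x,\lambda}^6 = U^6 - 6\lambda^{-1/2}U^5 H_a + 15\lambda^{-1}U^4 H_a^2 + \ldots$ and termwise integration yield, with $\int_\Omega U^6 = (S/3)^{3/2} + \mathcal O(\lambda^{-3})$ and the Coulomb expansion of $H_a$, the three leading corrections $-8\pi\phi_a(x)\lambda^{-1}$, $+8\pi a(x)\lambda^{-2}$ (from the $|y-x|$-correction to $H_a$ via $\int U^5|y-x|\,dy$), and $+15\pi^2\phi_a(x)^2\lambda^{-2}$ (from $\phi_a(x)^2\int U^4$), giving \eqref{eq:upperd}. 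Finally, \eqref{eq:upperquo} follows by Taylor-expanding the ratio of \eqref{eq:uppern} over the cube root of \eqref{eq:upperd}, using $3(S/3)^{3/2}/(S/3)^{1/2} = S$; the uniform prefactor $(S/3)^{-1/2}$ emerges from the linearization of the cube root.

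The main obstacle is the precise tracking of all $\lambda^{-2}$ contributions uniformly for $x$ in compact subsets of $\Omega$: in particular, (a)~the $\delta$-independent near-diagonal cancellation in $\int U^2 - \lambda^{-1/2}\int U/|y-x|$ that produces the coefficient $2\pi(4-\pi)$; (b)~the extraction of the Coulomb correction $-(a(x)/2)|y-x|$ to $H_a$, which is the source of both $a(x)\lambda^{-2}$ coefficients and requires isolating the leading near-diagonal singularity of $-\Delta_y H_a = a(y)G_a(x,y)$; and (c)~the handling of $\epsilon\int V\psi^2$ with only $V\in L^\infty$, which precludes pointwise expansion of $V$ and necessitates the inner/outer splitting argument above.
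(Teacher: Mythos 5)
Your proof takes essentially the same route as the paper's: integrate the identity $(-\Delta+a)\psi_{x,\lambda}=3U_{x,\lambda}^5+a(PU_{x,\lambda}-\lambda^{-1/2}G_0(x,\cdot))$ against $\psi_{x,\lambda}$ (algebraically equivalent to the paper's splitting, which uses $-\Delta\psi_{x,\lambda}=3U_{x,\lambda}^5-\lambda^{-1/2}aG_a(x,\cdot)$ and then combines the $a$-terms into $T(x,\lambda)$), expand using the Rey decomposition of $PU_{x,\lambda}$, the near-diagonal expansion of $H_a$, and the explicit integrals of powers of $U_{x,\lambda}$. The one technical imprecision is in your stated near-diagonal expansion $H_a(x,y)=\phi_a(x)-\tfrac{a(x)}{2}|y-x|+o(|y-x|)$: since $H_a(x,\cdot)+\tfrac{a(x)}{2}|\cdot-x|$ is only $C^{1,\alpha}$, the correct statement (see the paper's Lemma \ref{lem-uh}, Step~2) carries a gradient term,
\[
H_a(x,y)=\phi_a(x)+\xi_x\cdot(y-x)-\tfrac{a(x)}{2}|y-x|+o(|y-x|),
\]
with $\xi_x$ generically nonzero, so your expansion as written is false pointwise. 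This does not affect the final coefficients, because $\xi_x\cdot(y-x)$ is odd about $x$ and integrates to zero against the radial weights $U_{x,\lambda}^5$ and $U_{x,\lambda}^4$ (as the paper notes in \eqref{mean-0}), and its square contributes only $\mathcal O(\lambda^{-3})$ in the denominator; but the term should be retained and its cancellation noted explicitly rather than suppressed. Your handling of the $\epsilon V$-term by inner/outer splitting is a mild variant of the paper's Lemma \ref{lem-V} (which gives the sharper $\mathcal O(\lambda^{-2}\log\lambda)$ error directly), and both suffice for the stated $o(\epsilon\lambda^{-1})$ remainder.
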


In the proof of Theorem \ref{exppsi} we do not use the fact that $a$ is critical. We only use the fact that $-\Delta +a$ is coercive. In the following corollary we use criticality.

\begin{cor} \label{cor-2}
One has $\phi_a(x)\geq 0$ for all $x\in\Omega$ and $a(x)\leq 0$ for all $x\in\mathcal N_a$. 
\end{cor}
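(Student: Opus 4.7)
The plan is to deduce both statements as immediate consequences of the expansion \eqref{eq:upperquo} of Theorem \ref{exppsi} specialized to $\epsilon=0$, combined with the criticality hypothesis $S(a)=S$.

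First I would address $\phi_a\geq 0$. Suppose for contradiction that $\phi_a(x_0)<0$ for some $x_0\in\Omega$. Setting $\epsilon=0$ and $x=x_0$ in \eqref{eq:upperquo} gives
\[
\mathcal S_a[\psi_{x_0,\lambda}] = S + \left(\tfrac{S}{3}\right)^{-1/2} 4\pi\,\phi_a(x_0)\,\lambda^{-1} + O(\lambda^{-2}),
\]
so the $\lambda^{-1}$ coefficient is strictly negative. Choosing $\lambda$ sufficiently large produces a trial function with $\mathcal S_a[\psi_{x_0,\lambda}]<S$, contradicting $S(a)=S$.

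Next I would handle $a\leq 0$ on $\mathcal N_a$. Fix $x_0\in\mathcal N_a$, so $\phi_a(x_0)=0$. The $\lambda^{-1}$ term in \eqref{eq:upperquo} (with $\epsilon=0$) vanishes and the $\phi_a(x_0)^2$ contribution to the $\lambda^{-2}$ term also vanishes, leaving
\[
\mathcal S_a[\psi_{x_0,\lambda}] = S - \left(\tfrac{S}{3}\right)^{-1/2} 2\pi^2\, a(x_0)\,\lambda^{-2} + o(\lambda^{-2}).
\]
If $a(x_0)>0$, this forces $\mathcal S_a[\psi_{x_0,\lambda}]<S$ for all sufficiently large $\lambda$, again contradicting $S(a)=S$. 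Hence $a(x_0)\leq 0$.

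There is no real obstacle here: the only subtlety is that criticality is used only in its weak form $S(a)=S$ (not the strict inequality for strictly smaller potentials), and the two asymptotic regimes (linear in $\lambda^{-1}$ for generic $x$, quadratic in $\lambda^{-1}$ at points of $\mathcal N_a$) decouple cleanly because the $\phi_a(x)^2$ correction at order $\lambda^{-2}$ vanishes precisely when $\phi_a(x)=0$. The corollary is thus a direct consequence of the one-sided bound already encoded in Theorem \ref{exppsi}.
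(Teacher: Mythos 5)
Your proof is correct and coincides with the paper's argument: both set $\epsilon=0$ in \eqref{eq:upperquo}, read off the sign of the $\lambda^{-1}$ coefficient for a general $x\in\Omega$ to get $\phi_a\geq 0$, and then read off the sign of the $\lambda^{-2}$ coefficient at a point of $\mathcal N_a$ (where the $\phi_a$ terms drop out) to get $a\leq 0$ there, using only $S(a)=S\leq \mathcal S_a[\psi_{x,\lambda}]$.
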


The first part of this corollary appears in \cite[Thm. 7]{Br}. Note that the second part is non-trivial since we do \emph{not} assume \eqref{eq:anegative}.

\begin{proof}
We apply \eqref{eq:upperquo} with $\epsilon=0$. We get $\mathcal S_a[\psi_{x,\lambda}] = S+(S/3)^{-1/2} 4\pi \phi_a(x) \lambda^{-1} + o(\lambda^{-1})$ for any fixed $x\in\Omega$. Since $S=S(a)\leq \mathcal S_a[\psi_{x,\lambda}]$, we infer that $\phi_a(x)\geq 0$ for all $x\in\Omega$. Similarly, $\mathcal S_a[\psi_{x,\lambda}] = S-(S/3)^{-1/2} 2\pi^2 a(x) \lambda^{-2} + o(\lambda^{-2})$ for any fixed $x\in\mathcal N_a$ implies that $a(x)\leq 0$ for all $x\in\mathcal N_a$.
\end{proof}

\begin{cor}\label{thm-upperb}
Assume that $\mathcal N_a(V)\neq\emptyset$. Then $S(a+\epsilon V)<S$ for all $\epsilon>0$ and, as $\epsilon\to 0+$,
$$
S(a+\epsilon V) \leq  
S - \left( \frac{S}{3}\right)^{-\frac{1}{2}} \frac{1}{8\pi^2} \sup_{x\in\mathcal N_a(V)} \frac{Q_V(x)^2}{|a(x)|} \, \epsilon^2 + o(\epsilon^{2})\, ,
  $$
where the right side is to be understood as $-\infty$ if $a(x)=0$ for some $x\in \mathcal N_a(V)$. 
\end{cor}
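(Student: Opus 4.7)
The plan is to apply \eqref{eq:upperquo} to the trial functions $\psi_{x, \lambda}$ for $x \in \mathcal N_a(V)$ and to optimize the choice of $\lambda = \lambda_\epsilon$. Since $\mathcal N_a(V) \subset \mathcal N_a$, one has $\phi_a(x) = 0$ for such $x$, so \eqref{eq:upperquo} reduces to
$$
\mathcal S_{a+\epsilon V}[\psi_{x, \lambda}] = S + \left( \frac{S}{3}\right)^{-\frac{1}{2}} \left( \frac{\epsilon Q_V(x)}{\lambda} - \frac{2\pi^2 a(x)}{\lambda^2}\right) + o(\lambda^{-2}) + o(\epsilon \lambda^{-1}),
$$
and by Corollary \ref{cor-2} one has $a(x) \leq 0$ there. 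The bracket is a quadratic in $\lambda^{-1}$ whose behavior splits naturally according to the sign of $a(x)$.

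The strict inequality $S(a+\epsilon V) < S$ for every $\epsilon > 0$ is immediate: for any fixed $\epsilon > 0$ and any fixed $x \in \mathcal N_a(V)$, as $\lambda \to \infty$ the term $(S/3)^{-1/2}\epsilon Q_V(x)/\lambda$ is of order $\lambda^{-1}$ with negative coefficient (since $Q_V(x) < 0$) and dominates the $O(\lambda^{-2})$ term and both remainders. Hence $\mathcal S_{a+\epsilon V}[\psi_{x, \lambda}] < S$ for all sufficiently large $\lambda$, giving $S(a+\epsilon V)<S$.

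For the quantitative bound, I distinguish two cases. Suppose first that $a(x) < 0$ for some $x \in \mathcal N_a(V)$. The bracket is then minimized at
$$
\lambda_\epsilon(x) := \frac{4\pi^2 |a(x)|}{\epsilon |Q_V(x)|},
$$
with minimum value $-\epsilon^2 Q_V(x)^2/(8\pi^2 |a(x)|)$. Since $\lambda_\epsilon(x)$ is of order $\epsilon^{-1}$, both error terms are $o(\epsilon^2)$, so
$$
S(a + \epsilon V) \leq \mathcal S_{a + \epsilon V}[\psi_{x, \lambda_\epsilon(x)}] = S - \left(\frac{S}{3}\right)^{-\frac12} \frac{Q_V(x)^2}{8\pi^2 |a(x)|}\epsilon^2 + o(\epsilon^2);
$$
taking $\limsup_{\epsilon \to 0+}$ of $(S(a+\epsilon V) - S)/\epsilon^2$ and then the supremum in $x$ will yield the claim. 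If instead $a(x_0) = 0$ for some $x_0 \in \mathcal N_a(V)$, then the right side of the claim is to be read as $-\infty$. For any prescribed $M > 0$, I will fix a small constant $c > 0$ with $(S/3)^{-1/2}|Q_V(x_0)|/c \geq M$ and take $\lambda_\epsilon = c/\epsilon$; inserting this in the reduced expansion (with $a(x_0) = 0$) gives
$$
\mathcal S_{a+\epsilon V}[\psi_{x_0, c/\epsilon}] = S - \left(\frac{S}{3}\right)^{-\frac12}\frac{|Q_V(x_0)|}{c}\epsilon^2 + o(\epsilon^2) \leq S - M \epsilon^2 + o(\epsilon^2),
$$
so $\limsup_{\epsilon \to 0+}(S(a+\epsilon V) - S)/\epsilon^2 \leq -M$; letting $M \to \infty$ completes the proof.

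The main obstacle is bookkeeping: when the supremum in the first case is not attained, a standard diagonal argument is required (for each $\delta > 0$ choose $x_\delta \in \mathcal N_a(V)$ whose coefficient is within $\delta$ of the supremum, apply the pointwise bound at $x_\delta$, and then let $\delta \to 0$ after $\epsilon \to 0$). The uniformity of the remainders on compact subsets of $\Omega$ provided by Theorem \ref{exppsi}, combined with the fact that $\lambda_\epsilon(x)$ is of order $\epsilon^{-1}$ with a constant that is bounded and bounded away from $0$ on subsets of $\mathcal N_a(V)$ where $|a|$ and $|Q_V|$ are bounded away from $0$, makes this step routine; no genuinely new analytic difficulty arises beyond identifying the optimal $\lambda$.
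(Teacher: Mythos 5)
Your proof is correct and follows the same core strategy as the paper: evaluate \eqref{eq:upperquo} on the trial functions $\psi_{x,\lambda}$ with $x\in\mathcal N_a(V)$ (so $\phi_a(x)=0$), optimize over $\lambda$ of order $\epsilon^{-1}$, and then take the infimum over $x$. (The paper simply substitutes $\lambda=(k\epsilon)^{-1}$ and minimizes $kQ_V(x)-2\pi^2 a(x)k^2$ over $k>0$ and $x\in\mathcal N_a$, which absorbs your two cases $a(x)<0$ and $a(x)=0$ into a single infimum.) The one genuinely different step is your proof that $S(a+\epsilon V)<S$ for \emph{every} $\epsilon>0$: you fix $\epsilon$ and send $\lambda\to\infty$, so that $(S/3)^{-1/2}\epsilon Q_V(x)\lambda^{-1}$ dominates; this works because the remainders in \eqref{eq:upperquo} are $o(\lambda^{-2})+o(\epsilon\lambda^{-1})$ uniformly in $\epsilon\ge 0$, hence $o(\lambda^{-1})$ for fixed $\epsilon$. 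The paper instead first deduces $S(a+\epsilon V)<S$ for small $\epsilon$ from the quantitative bound and then appeals to a soft structural fact: $\epsilon\mapsto S(a+\epsilon V)$, being an infimum of affine functions of $\epsilon$, is concave, and a concave function with $S(a)=S$ and $S(a+\epsilon_0 V)<S$ stays strictly below $S$ for all $\epsilon\ge\epsilon_0$. Your argument is more hands-on and tied to the specific expansion; the paper's concavity trick is a reusable general principle. Both are valid.

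One small remark: the ``diagonal argument'' you mention at the end is unnecessary. The quantity $\limsup_{\epsilon\to 0+}\bigl(S(a+\epsilon V)-S\bigr)/\epsilon^2$ is a single fixed number, bounded above by $-(S/3)^{-1/2}Q_V(x)^2/(8\pi^2|a(x)|)$ for \emph{each} $x\in\mathcal N_a(V)$ with $a(x)<0$; passing to the infimum over $x$ on the right requires no interchange of limits. Whether or not the supremum in the statement is attained is irrelevant here.
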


\begin{proof}
We fix $x\in\mathcal N_a$ and $k>0$ and apply \eqref{eq:upperquo} with $\lambda = (k\epsilon)^{-1}$. Since $S(a+\epsilon V) \leq \mathcal S_a[\psi_{x,\lambda}]$, we obtain
$$
\limsup_{\epsilon\to 0} \frac{S(a+\epsilon V) - S}{\epsilon^2} \leq (S/3)^{-1/2}  \left( k \int_\Omega  V\, G_a^2(x,y)\,dy - 2 \pi^2\, a(x)\, k^2 \right).
$$
Thus,
$$
\limsup_{\epsilon\to 0} \frac{S(a+\epsilon V) - S}{\epsilon^2} \leq (S/3)^{-1/2}  \inf_{x\in\mathcal N_a,\, k>0} \left( k \int_\Omega  V\, G_a^2(x,y)\,dy - 2 \pi^2\, a(x)\, k^2 \right),
$$
which implies the claimed upper bound.

For each $u\in H^1_0(\Omega)$, $\epsilon\mapsto \mathcal S_{a+\epsilon V}[u]$ is an affine linear function, and therefore its infimum over $u$, which is $\epsilon\mapsto S(a+\epsilon V)$, is concave. Since $S(a+\epsilon V)<S$ for all sufficiently small $\epsilon>0$, as we have just shown, we conclude that $S(a+\epsilon V)<S$ for all $\epsilon>0$.
\end{proof}


\subsection{Auxiliary facts}

In this preliminary subsection we collect some expansions that will be useful in the proof of Theorem \ref{exppsi} as well as later on. In order to emphasize that criticality is not needed, we state them for a function $b\in C(\overline\Omega)\cap C^1(\Omega)$ such that the operator $-\Delta +b$ in $\Omega$ with Dirichlet boundary conditions is coercive.

\begin{lem} \label{lem-V}
As $\lambda\to\infty$, uniformly in $x$ from compact subsets of $\Omega$,
\begin{align*}
\left\| ( U_{x,\lambda}   - \lambda^{-1/2} H_b(x,\cdot)) - \lambda^{-1/2} G_b(x,\cdot) \right\|_{6/5} & = \mathcal{O}(\lambda^{-2}) \,, \\
\left\| ( U_{x,\lambda}   - \lambda^{-1/2} H_b(x,\cdot))^2 - \lambda^{-1} G_b(x,\cdot)^2 \right\|_1 & = \mathcal{O}(\lambda^{-2}\ln\lambda) \,.
\end{align*}
\end{lem}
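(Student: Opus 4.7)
The plan rests on the elementary identity $H_b(x,y)+G_b(x,y)=|x-y|^{-1}$, which is simply the definition of $H_b$ and which causes the leading singularities to cancel. Both bounds then reduce to estimates on the single quantity
$$
F_{x,\lambda}(y) := U_{x,\lambda}(y) - \lambda^{-1/2}|x-y|^{-1} \,,
$$
whose explicit form allows direct rescaling arguments.

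For the first bound, the identity gives $(U_{x,\lambda}-\lambda^{-1/2}H_b) - \lambda^{-1/2}G_b = F_{x,\lambda}$ exactly. I would change variables $y=x+\lambda^{-1}z$: the integrand becomes $\lambda^{1/2}\bigl[(1+|z|^2)^{-1/2}-|z|^{-1}\bigr]$, and $dy=\lambda^{-3}\,dz$. Raising to the $6/5$ power and integrating picks up the factor $\lambda^{3/5-3}=\lambda^{-12/5}$, and the remaining integral over $\lambda(\Omega-x)\subset\R^3$ is uniformly bounded as $\lambda\to\infty$, because $\bigl|(1+|z|^2)^{-1/2}-|z|^{-1}\bigr|^{6/5}$ behaves like $|z|^{-6/5}$ near the origin (exponent $<3$) and like $|z|^{-18/5}$ near infinity (exponent $>3$). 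Taking the $5/6$-th root yields $\mathcal{O}(\lambda^{-2})$, uniformly in $x$ on compact subsets of $\Omega$.

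For the second bound, I would expand the square and use $\lambda^{-1}G_b^2=\lambda^{-1}|x-\cdot|^{-2}-2\lambda^{-1}H_b|x-\cdot|^{-1}+\lambda^{-1}H_b^2$ to regroup terms as
$$
(U_{x,\lambda}-\lambda^{-1/2}H_b)^2 - \lambda^{-1}G_b^2 \; = \; \bigl(U_{x,\lambda}^2-\lambda^{-1}|x-\cdot|^{-2}\bigr) \; - \; 2\lambda^{-1/2}\,H_b(x,\cdot)\,F_{x,\lambda} \,.
$$
For the first piece one has the pleasant algebraic identity $U_{x,\lambda}^2-\lambda^{-1}|x-\cdot|^{-2}=-\bigl[\lambda|x-\cdot|^2(1+\lambda^2|x-\cdot|^2)\bigr]^{-1}$, and integrating in spherical coordinates with the substitution $u=\lambda r$ gives $(4\pi/\lambda^2)\arctan(\lambda R)=\mathcal{O}(\lambda^{-2})$. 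For the second piece, the function $H_b(x,\cdot)$ is bounded on $\overline\Omega$ uniformly in $x$ on compacts, because the Dirichlet condition $G_b|_{\partial\Omega}=0$ forces $H_b(x,\cdot)|_{\partial\Omega}=|x-\cdot|^{-1}$, which is bounded for $x$ away from $\partial\Omega$. A rescaling computation for $\|F_{x,\lambda}\|_1$ now encounters the only logarithm in the problem: at infinity the integrand $\bigl|(1+|z|^2)^{-1/2}-|z|^{-1}\bigr|$ decays like $|z|^{-3}$, so after multiplying by the volume element $4\pi|z|^2\,d|z|$ the radial integral is log-divergent and gets cut off at radius $\sim\lambda$, yielding $\|F_{x,\lambda}\|_1=\mathcal{O}(\lambda^{-5/2}\ln\lambda)$. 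Multiplying by the prefactor $\lambda^{-1/2}$ gives $\mathcal{O}(\lambda^{-3}\ln\lambda)$, which is absorbed into the claimed $\mathcal{O}(\lambda^{-2}\ln\lambda)$.

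There is no substantial obstacle, only a bookkeeping point: the two terms in the defining decomposition must be combined in exactly the way indicated, so that the $|x-y|^{-1}$ singularities disappear before any norm is taken. Once that is done, every remaining quantity has a closed form, and the bounds follow from purely scaling-invariant calculations on $\R^3$ combined with the boundedness of $H_b(x,\cdot)$ on $\overline\Omega$ uniformly in $x$ on compact subsets of $\Omega$.
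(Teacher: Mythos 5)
Your proposal uses the same cancellation as the paper — $H_b(x,\cdot)+G_b(x,\cdot)=|x-\cdot|^{-1}$ reduces both left-hand sides to expressions involving only the explicit function $F_{x,\lambda}=U_{x,\lambda}-\lambda^{-1/2}|x-\cdot|^{-1}$ and the boundedness of $H_b(x,\cdot)$. The only real difference is bookkeeping: the paper controls $F_{x,\lambda}$ and $U_{x,\lambda}^2-\lambda^{-1}|x-\cdot|^{-2}$ via pointwise $\min$-bounds and then integrates, while you rescale $y=x+\lambda^{-1}z$ and integrate on $\lambda(\Omega-x)$; these are equivalent and your scaling exponents and the log from the $L^1$ norm of $F_{x,\lambda}$ are correct. (Indeed, like the paper's own estimates, your computation actually gives the slightly stronger $\mathcal O(\lambda^{-2})$ for the second norm.)

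One step in your argument has a gap, however, concerning the claim $\|H_b(x,\cdot)\|_{L^\infty(\Omega)}\lesssim 1$ uniformly for $x$ in compacts. You deduce it from the boundary values $H_b(x,\cdot)|_{\partial\Omega}=|x-\cdot|^{-1}$ "by the Dirichlet condition." That inference requires a maximum principle, which holds for $b=0$ because $H_0(x,\cdot)$ is harmonic; but for general $b$ the regular part satisfies $\Delta_y H_b(x,y)=-b(y)G_b(x,y)\neq 0$, so knowing the boundary values does not directly control the interior. The fact itself is true, but needs a different argument: the paper establishes \eqref{sup-h-2} by first proving $\|H_0(x,\cdot)\|_\infty=d(x)^{-1}$ via the maximum principle, and then bounding $H_b-H_0$ through the resolvent identity $H_b(x,y)-H_0(x,y)=(4\pi)^{-1}\int_\Omega G_0(x,z)b(z)G_b(z,y)\,dz$ together with the uniform bound on $\iint G_0 G_b$. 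You should replace your heuristic with that argument, or simply cite \eqref{sup-h-2}; otherwise everything in your proposal is sound.
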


\begin{proof}
Since 
$$
( U_{x,\lambda}   - \lambda^{-1/2} H_b(x,y)) - \lambda^{-1/2} G_b(x,y) =
- \lambda^{-1/2} \left( \frac{1}{|x-y|} - \frac{\lambda}{\sqrt{1+\lambda^2|x-y|^2}} \right),
$$
the first bound follows immediately from
\begin{equation}
\label{u-est}
0\leq \frac{1}{|x-y|}- \frac{\lambda}{\sqrt{1+\lambda^2 |x-y|^2}} 
\leq 
\min\left\{ \frac{1}{|x-y|}, \frac{1}{2\lambda^2|x-y|^3} \right\}.
\end{equation}
To prove the second bound, we write
\begin{align*}
( U_{x,\lambda}   - \lambda^{-1/2} H_b(x,y))^2 - \lambda^{-1} G_b^2(x,y) & = - \lambda^{-1} \Big(\frac{1}{|x-y|^2}- \frac{\lambda^2}{1+\lambda^2 |x-y|^2 }\Big) \\
& \quad + 2  \lambda^{-1} H_b(x,y) \left(\frac{1}{|x-y|} - \frac{\lambda}{\sqrt{1+\lambda^2 |x-y|^2)} }  \right). 
\end{align*}
The last term on the right side can be bounded as before, using the fact that $H_b(x,\cdot)$ is uniformly bounded in $L^\infty(\Omega)$ for $x$ in compact subsets of $\Omega$, see \eqref{sup-h-2} below. The first term on the right side can be bounded using
$$
0\leq \frac{1}{|x-y|^2}- \frac{\lambda^2}{1+\lambda^2 |x-y|^2 } 
\leq \min\left\{ \frac{1}{|x-y|^2}, \frac{1}{\lambda^2|x-y|^4} \right\} \, . 
$$
This proves the lemma.
\end{proof}

\begin{lem} \label{lem-uh}
As $\lambda\to \infty$, uniformly for $x$ in compact subsets of $\Omega$,
\begin{align*}
\int_\Omega U_{x,\lambda}^5\, H_b(x, y)\,dy & = \frac{4\pi}{3}\,   \phi_b(x) \, \lambda^{-1/2}-\frac{4\pi}{3}\, b(x) \, \lambda^{-3/2} +o( \lambda^{-3/2}) \,.
\end{align*}
\end{lem}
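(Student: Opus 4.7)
The plan is to rescale and then Taylor-expand $H_b$ around its diagonal value. After the substitution $y = x + z/\lambda$ one gets
$$
\int_\Omega U_{x,\lambda}^5\, H_b(x,y)\,dy \;=\; \lambda^{-1/2} \int_{\lambda(\Omega-x)} \frac{H_b(x,\, x+z/\lambda)}{(1+|z|^2)^{5/2}}\, dz,
$$
so the expansion is controlled by the short-distance behavior of $H_b(x,\cdot)$ together with the two elementary integrals
$$
\int_{\R^3} \frac{dz}{(1+|z|^2)^{5/2}} = \frac{4\pi}{3}, \qquad \int_{\R^3} \frac{|z|\,dz}{(1+|z|^2)^{5/2}} = \frac{8\pi}{3},
$$
the second of which I would compute via $u = 1+r^2$ after passing to spherical coordinates.

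The key input is a two-term expansion of $H_b(x,y)$ near $y=x$. Starting from the symmetry $G_b(x,y)=G_b(y,x)$ and \eqref{Ga-pde} one finds $-\Delta_y H_b(x,y) = b(y) G_b(x,y) = b(y)/|x-y| - b(y) H_b(x,y)$. Since $-\Delta_y\bigl(-\tfrac12 |x-y|\bigr) = 1/|x-y|$, I would subtract the explicit singular profile and write
$$
H_b(x,y) = \phi_b(x) - \tfrac12\, b(x)\, |x-y| + \tilde K_x(y), \qquad \tilde K_x(x)=0,
$$
where a short computation yields $-\Delta_y \tilde K_x(y) = \tfrac{b(y)-b(x)}{|x-y|} - b(y) H_b(x,y)$. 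This right-hand side is bounded on $\Omega$, uniformly for $x$ in a compact $K\subset\Omega$, because $b\in C^1(\Omega)$ and $H_b(x,\cdot)$ is uniformly bounded there. Standard elliptic regularity then gives $\tilde K_x \in C^{1,\alpha}_{\text{loc}}$ with uniform bounds, hence
$$
\tilde K_x(x+h) = \nabla \tilde K_x(x)\cdot h + O(|h|^{1+\alpha}),
$$
the implied constant being uniform for $x\in K$.

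Substituting this three-term expansion into the rescaled integral, the constant piece $\phi_b(x)$ contributes
$\lambda^{-1/2}\phi_b(x)\left(\tfrac{4\pi}{3} + O(\lambda^{-2})\right)$, where the $O(\lambda^{-2})$ error comes from cutting the integral at $|z|\gtrsim \lambda$ (valid since $\lambda(\Omega-x)$ contains a ball of radius $c_K\lambda$ for $x\in K$). The term $-\tfrac12 b(x)|y-x|$ produces $-\tfrac12 b(x)\lambda^{-3/2}\cdot\tfrac{8\pi}{3} = -\tfrac{4\pi}{3} b(x)\lambda^{-3/2}$, again with a $O(\lambda^{-5/2})$ cut-off error. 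For the remainder $\tilde K_x$, the gradient piece integrates to $O(\lambda^{-1})$ over $\lambda(\Omega-x)$ (the $\R^3$-integral vanishes by radial symmetry of the kernel, leaving only a boundary defect $\int_{|z|\gtrsim \lambda}|z|(1+|z|^2)^{-5/2}\,dz = O(\lambda^{-1})$), so together with the prefactor $\lambda^{-1}$ from $\nabla\tilde K_x(x)\cdot z/\lambda$ it yields $O(\lambda^{-2})$; the Hölder piece is bounded by $\lambda^{-(1+\alpha)}\int_{\R^3}|z|^{1+\alpha}(1+|z|^2)^{-5/2}\,dz = O(\lambda^{-(1+\alpha)})$. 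Both error contributions, once multiplied by the outer $\lambda^{-1/2}$, are $o(\lambda^{-3/2})$, and the desired expansion follows.

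The main obstacle I anticipate is the precise form of the subleading singular term in $H_b$: writing down the ansatz $H_b(x,y) = \phi_b(x) - \tfrac12 b(x)|x-y| + \tilde K_x(y)$ and verifying, uniformly in $x\in K$, that the residual $\tilde K_x$ is $C^{1,\alpha}$ with vanishing diagonal. Once this is in place the remainder of the proof is just the two radial integrals above together with a far-field cutoff estimate, both straightforward.
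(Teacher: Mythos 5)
Your proof is correct and follows essentially the same route as the paper: both hinge on the decomposition $H_b(x,y) = \phi_b(x) - \tfrac12 b(x)|x-y| + \Psi_x(y)$, the observation that $-\Delta_y\Psi_x = \tfrac{b(y)-b(x)}{|x-y|} - b(y)H_b(x,y)$ is locally bounded, and interior elliptic regularity to get $C^{1,\alpha}_{\mathrm{loc}}$ control of the residual, uniformly for $x$ in compacts; your rescaling $y=x+z/\lambda$ is then just a tidy alternative to the paper's cut at radius $\rho$ with $\rho\to 0$, $\lambda\rho^2\to\infty$. One small point to patch: the $C^{1,\alpha}$ Taylor expansion of $\Psi_x$ is valid only for $|z/\lambda|$ below some fixed $\rho_0$, so the outer region $|z|\geq\rho_0\lambda$ should be estimated directly via the uniform $L^\infty$ bound on $H_b(x,\cdot)$ (the paper's Step~1), which contributes a harmless $\mathcal O(\lambda^{-5/2})$ and does not affect your conclusion.
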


\begin{proof}
\emph{Step 1.}
We claim that, with $d(x):= \mathrm{dist}(x,\partial\Omega)$,
\begin{equation}\label{sup-h-2}
\| H_b(x, \cdot) \|_\infty \ \lesssim\ d(x)^{-1}
\qquad \text{for all}\ x\in\Omega \,.
\end{equation}

Indeed, since $H_0(x,\cdot)$ is harmonic in $\Omega$, the maximum principle implies
\begin{equation}
\label{sup-h}
\| H_0(x,\cdot)\|_\infty = \sup_{y\in\partial\Omega} H_0(x,y) = d(x)^{-1} \,.
\end{equation}
In order to deduce \eqref{sup-h-2} we note that the resolvent identity implies
\begin{equation}  \label{resolvent}
H_b(x,y) - H_0(x,y) = \frac{1}{4\pi} \int_\Omega G_0(x,z) b(z) G_b(z,y)\, dz\,.
\end{equation}
The claim now follows from the fact that
$$
\sup_{x,y\in\Omega} \int_\Omega  G_0(x,z)\, G_b(z,y)\, dz \ < \ \infty \,.
$$

\emph{Step 2.}
We claim that for any $x\in\Omega$ there is a $\xi_x\in\R^3$ such that 
\begin{equation} \label{taylor-h}
H_b(x,y) = H_b(x,x) +  \xi_x \cdot(y-x)  -\frac{b(x)}{2}\, |y-x| + o(|y-x|) \qquad \text{ as}  \quad  y\to x \,.
\end{equation}
The asymptotics are uniform for $x$ from compact subsets of $\Omega$.

To prove this, let
\begin{equation} \label{eq-psi}
\Psi_x(y) := H_b(x,y) -H_b(x,x) + \frac{b(x)}{2}\, |y-x| \,. 
\end{equation}
Using the equation
\begin{equation} \label{Ha-pde}
\Delta_y\, H_a(x,y) + a(y)\, G_a(x,y) = 0
\end{equation}
as well as the fact that $\Delta|x|=2|x|^{-1}$ as distributions we see that $\Psi_x$ is a distributional solution of 
\begin{equation} \label{eq-psi2}
-\Delta_y \Psi_x(y) =  F_x(y)    \qquad \text{in} \ \ \Omega,
\end{equation}
where 
$$
F_x(y) := \frac{b(y)-b(x)}{|x-y|} - b(y) H_b(x,y) \,. 
$$
By Step 1 and the assumption $b\in C(\overline\Omega)\cap C^1(\Omega)$, we have $F_x\in L^\infty_{\rm loc}(\Omega)$. In particular, $F_x\in L^p_{\rm loc}(\Omega)$ for any $3<p<\infty$ and therefore, by elliptic regularity (see, e.g., \cite[Thm. 10.2]{LiLo}), $\Psi_x\in C^{1,\alpha}_{\rm loc}(\Omega)$ for $\alpha = 1-3/p$. Thus, in particular, $\Psi_x\in C^1(\Omega)$. Inserting the Taylor expansion
$$
\Psi_x(y) = \nabla_y\Psi_x(x)\cdot (y-x) + o(|y-x|) \qquad \text{ as } \quad  y\to x
$$
into \eqref{eq-psi}, we obtain the claim with $\xi_x =  \nabla_y\Psi_x(x)$. The uniformity statement follows from the fact that if $x$ is from a compact set $K\subset\Omega$, then there is an open set $\omega$ with $K\subset\omega\subset\overline\omega\subset\Omega$ such that the norm of $F_x$ in $L^p(\omega)$ is uniformly bounded for $x\in K$.

\emph{Step 3.}
We now complete the proof of the lemma. Let $0<\rho\leq d(x)$ and write, using Step 2,
\begin{align*}
\int_\Omega U_{x,\lambda}^5 H_b(x,y)\,dy & = \phi_b(x) \int_{B_\rho(x)} U_{x,\lambda}^5 \,dy + \int_{B_\rho(x)} U_{x,\lambda}^5 \xi_x\cdot(y-x)\,dy - \frac{b(x)}{2} \int_{B_\rho(x)} U_{x,\lambda}^5 |y-x| \,dy \\
& \quad + o\left( \int_{B_\rho(x)} U_{x,\lambda}^5 |y-x| \,dy\right) + \int_{\Omega\setminus B_\rho(x)} U_{x,\lambda}^5 H_b(x,y)\,dy 
\end{align*}
with $\rho\to 0$ as $\lambda\to\infty$. Since $x$ belongs to a compact subset of $\Omega$, we have $d(x)\gtrsim 1$, and therefore the bound \eqref{sup-h-2} from Step 1 implies
\begin{align*}
\left| \int_{\Omega \setminus B_\rho(x)} U_{x,\lambda}^5\, H_a(x, y)\,dy \right| &\  \lesssim \ \int_{\Omega \setminus B_\rho(x)} U_{x,\lambda}^5\,dy \leq \lambda^{-1/2}\, 4\pi \int_{\lambda\rho}^\infty \frac{t^2\, dt}{(1+t^2)^{5/2}} =   \mathcal{O}\left(\lambda^{-5/2}\, \rho^{-2} \right).
\end{align*}
Similarly,
\begin{align*}
\int_{B_\rho(x)} U_{x,\lambda}^5 \, dy & = \lambda^{-1/2}\, 4\pi \int_0^{\lambda\rho} \frac{t^2\, dt}{(1+t^2)^{5/2}} = \frac{4\pi}{3}\,\lambda^{-1/2} + \mathcal{O}\left(\lambda^{-5/2} \rho^{-2} \right)
\end{align*}
and
\begin{align*}
\int_{B_\rho(x)} U_{x,\lambda}^5\, |x-y| \, dy & =  4\pi\,  \lambda^{-\frac 32} \left(\int_0^\infty   \frac{t^3\, dt}{(1+t^2)^{5/2}} - \int_{\rho\lambda}^\infty \frac{t^3\, dt}{(1+t^2)^{5/2}}\right) \\
& =   \frac{8\pi}{3}\, \lambda^{-\frac 32} +  \mathcal{O}\left( \lambda^{-5/2}\, \rho^{-1} \right).
\end{align*}
Finally, since $U_{x,\lambda}$ is radial about $x$,
\begin{equation} \label{mean-0} 
\int_{B_\rho(x)} U_{x,\lambda}^5(y) \, \xi_x \cdot (y-x)\, dy =0 \,.
\end{equation}
Choosing $\rho\to 0$ with $\lambda \rho^{2}\to\infty$ we obtain the conclusion of the lemma.
\end{proof}

The argument in Step 2 is the only place in this paper where we use the $C^1$ assumption on $a$. Clearly the same proof would work if we only assumed $a\in C^{1,\alpha}(\Omega)$ for some $\alpha>0$.

\begin{lem} \label{lem-uh2}
As $\lambda\to \infty$, uniformly for $x$ in compact subsets of $\Omega$,
\begin{align*}
\int_\Omega U_{x,\lambda}^4\, H_b(x, y)^2\,dy & = \pi^2\,   \phi_b(x)^2 \, \lambda^{-1} + o( \lambda^{-1}) \,.
\end{align*}
\end{lem}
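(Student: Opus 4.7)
The strategy parallels the proof of Lemma \ref{lem-uh}, but is in fact simpler because we only need the continuity of $H_b(x,\cdot)$ at $y=x$ rather than the refined Taylor expansion from Step 2 of that proof. The plan is to split the integral over a small ball $B_\rho(x)$ and its complement, with $\rho \to 0$ and $\lambda\rho\to\infty$ as $\lambda\to\infty$, in such a way that the error terms are $o(\lambda^{-1})$.

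First I would handle the exterior region. Using the $L^\infty$ bound \eqref{sup-h-2} on $H_b(x,\cdot)$ (uniform for $x$ in compact subsets of $\Omega$), together with
$$
\int_{\Omega\setminus B_\rho(x)} U_{x,\lambda}^4\,dy \leq 4\pi\lambda^{-1} \int_{\lambda\rho}^\infty \frac{t^2}{(1+t^2)^2}\,dt = \mathcal{O}(\lambda^{-2}\rho^{-1}),
$$
one obtains $\int_{\Omega\setminus B_\rho(x)} U_{x,\lambda}^4 H_b(x,y)^2 \,dy = \mathcal{O}(\lambda^{-2}\rho^{-1})$.

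Next, for the interior region, I would write
$$
\int_{B_\rho(x)} U_{x,\lambda}^4 H_b(x,y)^2\,dy = \phi_b(x)^2 \int_{B_\rho(x)} U_{x,\lambda}^4\,dy + \int_{B_\rho(x)} U_{x,\lambda}^4 \bigl(H_b(x,y)^2 - \phi_b(x)^2\bigr)\,dy.
$$
Joint continuity of $H_b$ (which extends continuously across the diagonal) implies that $|H_b(x,y)^2 - \phi_b(x)^2|=o(1)$ as $y\to x$, uniformly for $x$ in compact subsets. The remainder term is therefore bounded by $o(1)\cdot \int_{B_\rho(x)} U_{x,\lambda}^4\,dy$. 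Using the explicit change of variable $t=\lambda|y-x|$ and $\int_0^\infty t^2(1+t^2)^{-2}\,dt=\pi/4$,
$$
\int_{B_\rho(x)} U_{x,\lambda}^4\,dy = 4\pi\lambda^{-1} \int_0^{\lambda\rho} \frac{t^2}{(1+t^2)^2}\,dt = \pi^2\lambda^{-1} + \mathcal{O}(\lambda^{-2}\rho^{-1}).
$$
Hence the main term equals $\pi^2\phi_b(x)^2\lambda^{-1} + \mathcal{O}(\lambda^{-2}\rho^{-1})$ and the remainder is $o(\lambda^{-1})$.

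Finally, choosing $\rho=\rho(\lambda)$ with $\rho\to 0$ but $\lambda\rho\to\infty$ (for instance $\rho=\lambda^{-1/2}$) forces all error terms — namely $\mathcal{O}(\lambda^{-2}\rho^{-1})$ from both the exterior bound and the inner expansion, and the $o(\lambda^{-1})$ from the continuity estimate — to be $o(\lambda^{-1})$. Collecting, this gives the claimed expansion. There is no substantive obstacle here; the only point requiring care is that the $o(1)$ from continuity of $H_b$ and the uniform bound \eqref{sup-h-2} both hold uniformly for $x$ in compact subsets of $\Omega$, which was already established in Step 1 of the proof of Lemma \ref{lem-uh}.
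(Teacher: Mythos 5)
Your proof is correct and is precisely the argument the authors intend when they say the proof is ``similar, but simpler than that of Lemma \ref{lem-uh} and is omitted''; the paper itself only supplies the normalization integral $\int_{\R^3}U_{x,\lambda}^4\,dy=\pi^2\lambda^{-1}$, which you also use. One small attribution slip at the end: the uniform bound \eqref{sup-h-2} is indeed Step~1 of the proof of Lemma \ref{lem-uh}, but the uniform modulus of continuity of $y\mapsto H_b(x,y)$ at the diagonal (over $x$ in compact subsets) is a consequence of the uniform Taylor expansion \eqref{taylor-h} obtained in Step~2 there, not of Step~1. This does not affect the validity of your argument.
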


The proof is similar, but simpler than that of Lemma \ref{lem-uh} and is omitted. We only note that the constant comes from
$$
\int_{\R^3} U_{x,\lambda}^4\,dy = 4\pi\,\lambda^{-1} \int_0^\infty \frac{t^2\,dt}{(1+t^2)^2} = \pi^2\, \lambda^{-1} \,. 
$$

\begin{lem} \label{lem-int-a}
As $x\to\infty$, uniformly for $x$ from compact subsets of $\Omega$,
\begin{equation*}
\int_\Omega b(y) \, U_{x,\lambda}(y) \left( \frac{\lambda^{-\frac 12}}{|x-y|} - U_{x,\lambda}(y)\right) dy = 2  \pi (\pi -2)\, b(x)\,\lambda^{-2} +   \mathcal{O}\left( \lambda^{-3}\log\lambda \right) \,.
\end{equation*}
\end{lem}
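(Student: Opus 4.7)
The approach is to rescale so that the integral becomes a fixed radial integral over $\R^3$. Setting $z=\lambda(y-x)$ and observing that
\[
U_{x,\lambda}(y) \Big( \frac{\lambda^{-1/2}}{|x-y|} - U_{x,\lambda}(y)\Big)
=  \lambda\, g\big(\lambda(y-x)\big), \qquad g(z) := \frac{1}{(1+|z|^2)^{1/2}}\Big( \frac{1}{|z|} - \frac{1}{(1+|z|^2)^{1/2}}\Big),
\]
the quantity of interest equals $\lambda^{-2} \int_{\lambda(\Omega-x)} b(x+\lambda^{-1}z)\, g(z)\,dz$. I would then split $b(x+\lambda^{-1}z) = b(x) + (b(x+\lambda^{-1}z) - b(x))$ and handle the two resulting pieces separately.

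For the leading piece the key computation is the explicit evaluation
\[
\int_{\R^3} g(z)\,dz = 4\pi \int_0^\infty \Big( \frac{t}{(1+t^2)^{1/2}} - \frac{t^2}{1+t^2}\Big)\, dt  = 4\pi \lim_{A\to\infty} \Big((1+A^2)^{1/2} - A - 1 + \arctan A \Big) = 2\pi(\pi-2),
\]
which yields the advertised coefficient. Since $g(z)\lesssim (1+|z|)^{-4}$, the error from enlarging the domain of integration from $\lambda(\Omega-x)$ to $\R^3$ is bounded by $\int_{|z|>\lambda d(x)} |z|^{-4}\,dz = \mathcal{O}(\lambda^{-1} d(x)^{-1})$, which after the overall prefactor $\lambda^{-2}$ is $\mathcal{O}(\lambda^{-3})$ uniformly for $x$ in compact subsets of $\Omega$ (where $d(x)$ is bounded below).

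For the remainder piece I would use the $C^1$ regularity of $b$: fixing a compact neighborhood $K'\subset\Omega$ of the compact set $K\ni x$, there is $\delta>0$ such that $x+\lambda^{-1}z\in K'$ whenever $|z|\leq \delta\lambda$, and the mean value theorem then gives $|b(x+\lambda^{-1}z)-b(x)|\leq \|\nabla b\|_{L^\infty(K')}\,|z|/\lambda$. On the complementary region $|z|>\delta\lambda$ the crude bound $|b(x+\lambda^{-1}z)-b(x)|\leq 2\|b\|_\infty$ combined with $g(z)\lesssim |z|^{-4}$ contributes only $\mathcal{O}(\lambda^{-3})$. The main obstacle, and the source of the logarithm, is the near-diagonal integral
\[
\int_{|z|\leq \delta\lambda} |z|\, g(z)\,dz = 4\pi \int_0^{\delta\lambda} t^3\, g(t)\,dt;
\]
since $t^3 g(t)\sim \tfrac{1}{2t}$ as $t\to\infty$, this integral grows like $\tfrac{1}{2}\log\lambda$, producing the advertised $\mathcal{O}(\lambda^{-3}\log\lambda)$ correction. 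Combining the three contributions yields the expansion stated in the lemma.
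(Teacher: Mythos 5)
Your proof is correct and takes essentially the same route as the paper's: both Taylor-expand $b$ near $x$, evaluate the explicit radial integral $4\pi\int_0^\infty\bigl(t/\sqrt{1+t^2}-t^2/(1+t^2)\bigr)\,dt=2\pi(\pi-2)$ for the leading term, identify the logarithmic divergence of $\int^{\lambda\rho} t^3 g(t)\,dt\sim\tfrac12\log\lambda$ as the source of the error, and control the far tail via the $|z|^{-4}$ decay of $g$ (equivalently, the bound \eqref{u-est} used in the paper). The only cosmetic difference is that the paper splits the domain at a shrinking radius $\rho=1/\ln\lambda$ and Taylor-expands only on $B_\rho(x)$, whereas you split $b=b(x)+(b-b(x))$ globally after rescaling and cut at a fixed radius $\delta$; both organizations produce the same $\mathcal{O}(\lambda^{-3}\log\lambda)$ remainder.
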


\begin{proof}
Let $0<\rho\leq\mathrm{dist}(x,\partial\Omega)$. Since $\frac{\lambda^{-\frac 12}}{ |x-y|} - U_{x,\lambda}(y) \geq 0$ for any $x,y\in\Omega$, the differentiability of $b$ at $x$ implies
\begin{align*}
 \int_{B_\rho(x)} b(y) \, U_{x,\lambda}(y) \left( \frac{\lambda^{-\frac 12}}{|x-y|} - U_{x,\lambda}(y)\right) dy  & =  b(x) \int_{B_\rho(x)}  U_{x,\lambda}(y) \left( \frac{\lambda^{-\frac 12}}{|x-y|} - U_{x,\lambda}(y)\right) dy  + R_\lambda
\end{align*}
with 
\begin{align} 
| R_\lambda |  & \ \lesssim \   \int_{B_\rho(x)} |x-y| \, U_{x,\lambda}(y) \left( \frac{\lambda^{-\frac 12}}{|x-y|} - U_{x,\lambda}(y)\right) dy \nonumber \\
& \ \lesssim \ \lambda^{-3}
\int_0^{\rho\lambda} \left(\frac{t^2}{\sqrt{1+t^2}} - \frac{t^3}{1+t^2}\right) dt =  \mathcal{O}\left( \lambda^{-3} \, \ln(\lambda \rho) \right).  \label{eq-R}
\end{align}
Moreover, 
\begin{align*}
\int_{B_\rho(x)}  U_{x,\lambda}(y) \left( \frac{\lambda^{-\frac 12}}{|x-y|} - U_{x,\lambda}(y)\right) dy & = \lambda^{-2} \, 4 \pi  \int_0^{\rho\lambda} \left(\frac{t}{\sqrt{1+t^2}} - \frac{t^2}{1+t^2}\right) dt \\
& = \lambda^{-2} \, 2\pi (\pi-2) \, (1+\mathcal{O}((\lambda\rho)^{-1})) \,. 
\end{align*}
On the complement of $B_\rho(x)$ we use the bound \eqref{u-est}, which gives
\begin{align*}
\left| \int_{\Omega\setminus B_\rho(x)} b(y) \, U_{x,\lambda}(y) \left( \frac{\lambda^{-\frac 12}}{|x-y|} - U_{x,\lambda}(y)\right) dy \right| 
 &  \lesssim \  \lambda^{-2} \int_{\rho\lambda}^\infty \frac{dt}{t\,(1+ t^2)^{1/2}}  = \mathcal{O}(\rho^{-1}\, \lambda^{-3}) \,.
\end{align*}
Choosing $\rho=1/\ln\lambda$ we obtain the bound in the lemma.
\end{proof}

The same proof shows that if $b$ is merely continuous, but not necessarily $C^1$, then the expansion still holds with an error $o(\lambda^{-2})$. This would be sufficient for our analysis.


\subsection{Expansion of the numerator}

One easily checks that for all $x\in\R^3$ and $\lambda>0$,
\begin{equation}  \label{el}
-\Delta U_{x,\lambda} = 3\, U_{x,\lambda}^5 \,.
\end{equation}
This, together with the equation \eqref{Ha-pde}, the harmonicity of $H_0(x,\cdot)$ and \eqref{eq-pu}, implies that 
\begin{equation}
\label{eq:eqpsi}
-\Delta_y \psi_{x, \lambda}(y) =  -\Delta_y U_{x,\lambda}(y) + \lambda^{-\frac 12}\, \Delta_y H_a(x,y) = 3\, U_{x,\lambda}^5(y) -\lambda^{-\frac 12}\, a(y)\, G_a(x,y).
\end{equation}
We now introduce $f_{x,\lambda}$ by
\begin{equation}
\label{PU exp} 
PU_{x,\lambda}= U_{x,\lambda}  - \lambda^{-1/2} H_0(x,\cdot ) - f_{x,\lambda}\, ,
\end{equation}
and recall that \cite[Prop. 1 (b)]{rey2}, with $d:=\mathrm{dist}(x,\partial\Omega)$,
\begin{equation} \label{sup-f} 
\|f_{x, \lambda}\|_\infty = \mathcal{O}(\lambda^{-5/2} d^{-3}) \,.
\end{equation}
Hence, by \eqref{eq:eqpsi} and the fact that $\psi_{x,\lambda}$ vanishes on the boundary,
\begin{align}\label{eq:expnp1}
\int_\Omega | \nabla \psi_{x, \lambda}|^2 & = \int_\Omega \left(3\, U_{x,\lambda}^5(y) -\lambda^{-\frac 12}\, a(y)\, G_a(x,y)\right) \left( U_{x,\lambda}(y) -\lambda^{-\frac 12}\, H_a(x,y) - f_{x,\lambda}(y)\right) dy \nonumber \\
& = 3 \int_\Omega U_{x,\lambda}^6(y)\, dy - 3 \,  \lambda^{-\frac 12}  \int_\Omega U_{x,\lambda}^5(y)\, H_a(x,y)\, dy \nonumber \\
& \quad - \lambda^{-1/2} \int_\Omega a(y)\, G_a(x,y)\, \left(U_{x,\lambda}(y) -\lambda^{-\frac 12}\, H_a(x,y) \right) dy 
\nonumber \\
& \quad - \int_\Omega \left(3\, U_{x,\lambda}^5(y) -\lambda^{-\frac 12}\, a(y)\, G_a(x,y)\right) f_{x,\lambda}(y)\,dy \,.
\end{align}
It is easy to see that
$$
\int_\Omega \left| 3\, U_{x,\lambda}^5(y) -\lambda^{-\frac 12}\, a(y)\, G_a(x,y) \right|dy = \mathcal O(\lambda^{-1/2})
$$
and therefore, by \eqref{sup-f} and the fact that $x$ is in a compact subset of $\Omega$,
$$
\int_\Omega \left(3\, U_{x,\lambda}^5(y) -\lambda^{-\frac 12}\, a(y)\, G_a(x,y)\right) f_{x,\lambda}(y)\,dy = \mathcal O(\lambda^{-3}) \,.
$$
A simple computation shows that the first term on the right side of \eqref{eq:expnp1} is
\begin{equation} \label{uq}
\int_\Omega  U_{x, \lambda}^6\,dy = \int_{\R^n}  U_{x, \lambda}^6\,dy  + \mathcal{O}(\lambda^{-3}) = \left( \frac{S}{3}\right)^{\frac{3}{2}} + \mathcal{O}(\lambda^{-3}) \,.
\end{equation}
For the second term we use Lemma \ref{lem-uh} and obtain
$$
3 \,  \lambda^{-\frac 12}  \int_\Omega U_{x,\lambda}^5(y)\, H_a(x,y)\, dy
= 4\pi \phi_a(x) \lambda^{-1} - 4\pi a(x)\lambda^{-2} + o(\lambda^{-2}) \,.
$$
We will combine the third term with the term coming from $\int_\Omega a\psi_{x,\lambda}^2\,dy$.

Using again expansion \eqref{PU exp} of $PU_{x,\lambda}$ we find
\begin{align*}
\int_\Omega (a+\epsilon V) \psi_{x, \lambda}^2(y)\, dy & = \int_\Omega (a+\epsilon V) \left(U_{x,\lambda} -\lambda^{-1/2}\, H_a(x,y)\right)^2 dy \\
& \quad - 2\int_\Omega (a+\epsilon V) (U_{x,\lambda}-\lambda^{-1/2} H_a(x,y))f_{x,\lambda}\,dy + \int_\Omega (a+\epsilon V) f_{x,\lambda}^2\,dy \,.
\end{align*}
Using \eqref{sup-f} and the fact that $x$ is in a compact subset of $\Omega$ it is easy to see that
\begin{align*}
- 2\int_\Omega (a+\epsilon V) (U_{x,\lambda}-\lambda^{-1/2} H_a(x,y))f_{x,\lambda}\,dy + \int_\Omega (a+\epsilon V) f_{x,\lambda}^2\,dy = \mathcal O(\lambda^{-3} (1+\epsilon)) \,.
\end{align*}

To summarize, we have shown that
\begin{align*} 
\int_\Omega \left( | \nabla \psi_{x, \lambda}|^2 + a\, \psi_{x, \lambda}^2\right)dy & = 3 \left( \frac{S}{3}\right)^{\frac{3}{2}}  - 4\pi\, \phi_a(x)\,\lambda^{-1} + 4\pi\,a(x)\,\lambda^{-2} + T(x,\lambda) \notag \\
& \quad + \epsilon \int_\Omega V (U_{x,\lambda}-\lambda^{-1/2} H_a(x,y))^2\,dy + o(\lambda^{-2}) + \mathcal O(\epsilon\lambda^{-3} ) 
\end{align*}
with
\begin{align*}
T(x,\lambda) := \int_\Omega a(y)\, \left(U_{x,\lambda}(y) -\lambda^{-1/2}\, H_a(x,y) \right) \left(U_{x,\lambda}(y)- \frac{\lambda^{-1/2}}{|x-y|} \right) dy \,.
\end{align*}
Similarly as in the proof of Lemma \ref{lem-int-a} one finds that
$$
\lambda^{-1/2} \int_\Omega a(y) \, H_a(x,y)  \Big (\frac{\lambda^{-1/2}}{|x-y|} - U_{x,\lambda}(y) \Big ) dy = \mathcal{O}(\lambda^{-3}\ln\lambda) \,.
$$
Hence, by Lemma \ref{lem-int-a},
\begin{equation*} 
T(x,\lambda) = - 2\pi(\pi -2)\,a(x) \, \lambda^{-2} +  o(\lambda^{-2}). 
\end{equation*}

Finally, by Lemma \ref{lem-V},
$$
\int_\Omega V (U_{x,\lambda} - \lambda^{-1/2} H_a(x,y))^2 \,dy = \lambda^{-1} \int_\Omega V G_a(x,y)^2\,dy + \mathcal O(\lambda^{-2}\ln\lambda) \,.
$$
This proves the first assertion in Theorem \ref{exppsi}.


\subsection{Expansion of the denominator}

By the decomposition \eqref{PU exp} for $PU_{x,\lambda}$ we obtain
$$
\int_\Omega  \psi_{x, \lambda}^6\,dy = \int_\Omega (U_{x,\lambda} - \lambda^{-1/2} H_a(x,y))^6\,dy + \mathcal O( \| U_{x,\lambda}- \lambda^{-1/2} H_a(x,\cdot)\|_5^5 \|f_{x,\lambda}\|_\infty + \|f_{x,\lambda}\|_6^6).
$$
Using \eqref{sup-h-2} and \eqref{sup-f}, together with the fact that $x$ is in a compact subset of $\Omega$, we see that the remainder term is $\mathcal O(\lambda^{-3})$. Next, we expand
\begin{align*}
\int_\Omega (U_{x,\lambda} - \lambda^{-1/2} H_a(x,y))^6\,dy
& = \int_\Omega U_{x,\lambda}^6\,dy - 6 \lambda^{-1/2} \int_\Omega U_{x,\lambda}^5  H_a(x,y)\,dy + 15 \lambda^{-1} \int_\Omega U_{x,\lambda}^4  H_a(x,y)^2 \,dy \\
& \quad + \mathcal O(\lambda^{-3/2} \|U_{x,\lambda}\|_3^3 \|H_a(x,\cdot)\|_\infty^2 + \lambda^{-3} \| H_a(x,\cdot)\|_6^6 ) \,.
\end{align*}
Using \eqref{sup-h-2}, together with the fact that $x$ is in a compact subset of $\Omega$, we see that the remainder term is $\mathcal O(\lambda^{-3}\ln\lambda)$. The first three terms on the right side are evaluated in \eqref{uq} and Lemmas \ref{lem-uh} and \ref{lem-uh2}. This proves the second assertion in Theorem \ref{exppsi}.


\subsection{Expansion of the quotient}

Expansion \eqref{eq:upperd} implies that
\begin{align*}
\left( \int_\Omega \psi_{x,\lambda}^6\,dy \right)^{-1/3} & =  \left( \frac{S}{3} \right)^{-\frac12} + \left( \frac{S}{3} \right)^{-2} \frac{8\pi}{3}\,\phi_a(x)\, \lambda^{-1} \\
& \quad + \left( \frac{S}{3} \right)^{-2} \left( - \frac{8\pi}{3}\, a(x) - 5\pi^2\,\phi_a(x)^2 + \frac{2}{9} \frac{64\pi^2}{(S/3)^{3/2}}\,\phi_a(x)^2 \right) \lambda^{-2} + o(\lambda^{-2}).
\end{align*}
Expansion \eqref{eq:upperquo} now follows by multiplying the previous equation with \eqref{eq:uppern}. This concludes the proof of Theorem \ref{exppsi}.


\section{\bf Lower bound on $S(a+\epsilon V)$. Preliminaries} \label{sec-lb}

\subsection{The asymptotic form of almost minimizers}

The remainder of this paper is concerned with proving a lower bound on $S(a+\epsilon V)$ which matches the upper bound from Corollary \ref{thm-upperb}. We will establish this by proving that functions $u_\epsilon$ for which $\mathcal S_{a+\epsilon V}[u_\epsilon]$ is `close' to $S(a+\epsilon V)$ are `close' to the functions $\psi_{x,\lambda}$ used in the upper bound for certain $x$ and $\lambda$ depending on $\epsilon$. We will prove this in several steps. The very first step is the following proposition. 

\begin{prop} \label{prop-app-min}
Let $(u_\epsilon)\subset H^1_0(\Omega)$ be a sequence of functions  satisfying
\begin{equation}
\label{eq:appminass}
\mathcal S_{a+\epsilon V}[u_\epsilon] = S+o(1) \,,
\qquad
\int_\Omega u_\epsilon^6\,dx = (S/3)^{3/2} \,.
\end{equation}
Then, along a subsequence,
\begin{equation} \label{u-rey}
u_\epsilon = \alpha_\epsilon\left( PU_{x_\epsilon,\lambda_\epsilon} + w_\epsilon\right),
\end{equation}
where
\begin{equation}
\begin{aligned}  \label{lim-k}
\alpha_\epsilon & \to s \qquad \text{for some}\ s\in\{-1,+1\} \,,\\
x_\epsilon & \to x_0 \qquad\text{for some}\ x_0\in\overline\Omega \,, \\
\lambda_\epsilon d_\epsilon & \to \infty \,,\\
\| \nabla w_\epsilon \| &\to 0 \qquad\text{and}\qquad w_\epsilon \in T_{x_\epsilon,\lambda_\epsilon}^\bot \,.  
\end{aligned}
\end{equation}
Here $d_\eps=$dist$(x_\eps,\partial\Omega)$. 
\end{prop}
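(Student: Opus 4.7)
The plan is to combine a concentration-compactness argument with an implicit-function-theorem adjustment. First I would establish that $(u_\epsilon)$ is bounded in $H^1_0(\Omega)$: from $\|u_\epsilon\|_6 = (S/3)^{1/4}$ and Hölder on the bounded domain $\Omega$, $\|u_\epsilon\|_2$ is bounded, so $|\int (a + \epsilon V) u_\epsilon^2|$ is bounded; combined with $\mathcal{S}_{a+\epsilon V}[u_\epsilon] \to S$ this gives $\|\nabla u_\epsilon\|_2^2 \to S(S/3)^{1/2}$. Passing to a subsequence, $u_\epsilon \rightharpoonup u_0$ weakly in $H^1_0$ and strongly in $L^p$ for every $p<6$. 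The Brezis--Lieb lemma applied to both $\|u_\epsilon\|_6^6$ and $\|\nabla u_\epsilon\|_2^2$, together with $\int(a+\epsilon V) u_\epsilon^2 \to \int a\, u_0^2$ (the $\epsilon V$-term is $o(1)$ since $\|V\|_\infty<\infty$), yields
$$
\int (|\nabla u_0|^2 + a u_0^2)\,dx + \int |\nabla(u_\epsilon - u_0)|^2\,dx = S(S/3)^{1/2} + o(1).
$$
Using the bound $\int(|\nabla u_0|^2 + a u_0^2) \geq S(\int u_0^6)^{1/3}$ (from $S(a)=S$), the Sobolev inequality on $u_\epsilon-u_0$, and strict subadditivity of $t\mapsto t^{1/3}$, I would force the dichotomy $u_0 \equiv 0$ or $\|u_\epsilon - u_0\|_6 \to 0$. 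The second alternative would make $u_0$ a nontrivial $H^1_0$-minimizer of $\mathcal{S}_a$, which is ruled out by the classical non-attainability of $S(a)$ for critical $a$ (see \cite{BrNi,dr}); hence $u_0 = 0$.

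Once concentration is established, a Struwe-type profile decomposition (equivalently, Lions' second concentration-compactness principle applied to the uniformly bounded $L^6$ densities at the Sobolev energy $S$) would yield, along a further subsequence, parameters $\tilde\alpha_\epsilon \in \R$ with $|\tilde\alpha_\epsilon| \to 1$, concentration points $x_\epsilon \in \Omega$, and scales $\lambda_\epsilon \to \infty$, together with a remainder $\tilde r_\epsilon$ satisfying $\|\nabla \tilde r_\epsilon\|_2 \to 0$, such that $u_\epsilon = \tilde\alpha_\epsilon PU_{x_\epsilon, \lambda_\epsilon} + \tilde r_\epsilon$. The condition $\lambda_\epsilon d_\epsilon \to \infty$ is both necessary and automatic in this regime: one has $\|\nabla(U_{x_\epsilon, \lambda_\epsilon} - PU_{x_\epsilon, \lambda_\epsilon})\|_2 \to 0$ precisely when $\lambda_\epsilon d_\epsilon \to \infty$, and only then can $PU_{x_\epsilon, \lambda_\epsilon}$ carry the full asymptotic energy $S(S/3)^{1/2}$; otherwise the mismatch between $U_{x_\epsilon,\lambda_\epsilon}$ and its Dirichlet projection would produce an $\dot H^1$-order-one contribution incompatible with $\|\nabla \tilde r_\epsilon\|\to 0$. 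A further subsequence gives $x_\epsilon \to x_0 \in \overline\Omega$ and $\tilde\alpha_\epsilon \to s \in \{\pm 1\}$.

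Finally, to promote $\tilde r_\epsilon$ to a remainder orthogonal to $T_{x_\epsilon, \lambda_\epsilon}$, I would apply the implicit function theorem to the five-dimensional system expressing the vanishing of the $T_{x,\lambda}$-components of $u_\epsilon/\alpha - PU_{x,\lambda}$ with respect to the $\int\nabla\cdot\nabla$ inner product. Non-degeneracy of $U_{x,\lambda}$ as a Sobolev extremal---its linearization has kernel exactly $T_{x,\lambda}$, reflecting the symmetries of \eqref{eq:sobolev}---makes this system a local diffeomorphism near $(\tilde\alpha_\epsilon, x_\epsilon, \lambda_\epsilon)$; smallness of $\tilde r_\epsilon$ then produces a unique nearby zero, giving parameters $(\alpha_\epsilon, x_\epsilon, \lambda_\epsilon)$ preserving the asymptotic behaviour of the previous step, and a remainder $w_\epsilon \in T_{x_\epsilon,\lambda_\epsilon}^\perp$ with $\|\nabla w_\epsilon\| \to 0$, proving \eqref{u-rey}--\eqref{lim-k}. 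The main obstacle is the first step: ruling out a nontrivial weak limit is the decisive point where criticality of $a$ (as opposed to the weaker equality $S(a)=S$) enters, via the classical non-attainment result.
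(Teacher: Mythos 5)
Your proposal is correct and follows essentially the same three-step strategy as the paper's proof: ruling out a nontrivial weak limit via the Br\'ezis--Lieb lemma together with Druet's non-attainability of $S(a)$ for critical $a$, then concentration-compactness to identify the bubble profile, then a parameter-fitting step to pass to the orthogonal decomposition. The paper implements the last two steps slightly differently --- using Lions' principle to obtain $s\,U_{z_\epsilon,\mu_\epsilon}$ in $\dot H^1(\R^3)$ and deriving $\mu_\epsilon\,\mathrm{dist}(z_\epsilon,\R^3\setminus\Omega)\to\infty$ from $L^6$-mass considerations rather than $\dot H^1$-energy, and then invoking the Bahri--Coron decomposition lemma \cite[Prop.~7]{BaCo} instead of spelling out an implicit-function argument --- but these are packaging differences rather than a genuinely distinct route.
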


If the $u_\epsilon$ are minimizers for $S(a+\epsilon V)$, and therefore solutions to the corresponding Euler--Lagrange equation, this proposition is well-known and goes back to work of Struwe \cite{St} and Bahri--Coron \cite{BaCo}. The result for almost minimizers is also well-known to specialists, but since we have not been able to find a proof in the literature, we include one in Appendix \ref{sec-app-b}. Here we only emphasize that the fact that $u_\epsilon$ converges weakly to zero in $H^1_0(\Omega)$ is deduced from a theorem of Druet \cite{dr} which says that $S(a)$ is not attained for critical $a$. (Note that this part of the paper \cite{dr} is valid for $a\in L^{3/2}(\Omega)$, without any further regularity requirement.)

\subsection*{Convention}
From now on we will assume that
\begin{equation}
\label{appr-min0}
S(a+\epsilon V)< S
\qquad\text{for all}\ \epsilon>0
\end{equation}
and that $(u_\epsilon)$ satisfies \eqref{appr-min}. In particular, assumption \eqref{eq:appminass} is satisfied. We will always work with a sequence of $\epsilon$'s for which the conclusions of Proposition \ref{prop-app-min} hold. To enhance readability, we will drop the index $\epsilon$ from $\alpha_\epsilon$, $x_\epsilon$, $\lambda_\epsilon$, $d_\epsilon$ and $w_\epsilon$. 


\section{\bf A priori bounds}\label{sec:apriori}


\subsection{Statement of the bounds}

From Proposition \ref{prop-app-min} we know that $\|\nabla w\|=o(1)$ and that the limit point $x_0$ of $(x_\epsilon)$ lies in $\overline\Omega$. The following proposition, which is the main result of this section, improves both these results.

\begin{prop}\label{wbound}
As $\epsilon\to 0$,
\begin{equation} \label{esp-w}
\| \nabla w\| =  \mathcal{O}\left(\lambda^{-1/2} \right) \,,
\end{equation}
\begin{equation} \label{eq-d}
d^{-1} = \mathcal{O}(1)
\end{equation}
and
\begin{equation}
\label{eq:energybound}
\lambda \left( S - S(a+\epsilon V) \right) = \mathcal O(1)
\qquad\text{and}\qquad
\lambda \left( \mathcal S_{a+\epsilon V}[u_\epsilon] - S(a+\epsilon V) \right) = o(1) \,.
\end{equation}
\end{prop}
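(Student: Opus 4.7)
My strategy is to substitute the decomposition $u_\epsilon = \alpha(PU_{x,\lambda}+w)$ from Proposition \ref{prop-app-min} directly into $\mathcal S_{a+\epsilon V}[u_\epsilon]$, exploit the orthogonality of $w$ to $T_{x,\lambda}$, and then combine the resulting expansion with the coercivity bound of Lemma \ref{coercivity} and the comparison inequality $\mathcal S_{a+\epsilon V}[u_\epsilon]\le S$ (valid for small $\epsilon$ by \eqref{appr-min} and \eqref{appr-min0}) to deduce all three assertions in one stroke.

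First I would expand numerator and denominator. Using $w\in T_{x,\lambda}^\bot$,
$$
\alpha^{-2}\!\int_\Omega\!\big(|\nabla u_\epsilon|^2+(a+\epsilon V)u_\epsilon^2\big)\,dy=\|\nabla PU\|^2+\|\nabla w\|^2+\int_\Omega(a+\epsilon V)(PU+w)^2\,dy,
$$
while binomial expansion yields
$$
\alpha^{-6}\!\int_\Omega u_\epsilon^6\,dy=\int_\Omega PU^6+6\!\int_\Omega PU^5 w+15\!\int_\Omega PU^4 w^2+R_3(w),
$$
with $|R_3(w)|\lesssim \|w\|_6^3+\|w\|_6^6$. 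The crucial identity $\int U_{x,\lambda}^5 w\,dy=\tfrac13\int\nabla PU\cdot\nabla w\,dy=0$ (from $-\Delta PU=3U^5$ and the orthogonality of $w$) reduces the linear term to $\int(PU^5-U^5)w$, which via \eqref{PU exp}, \eqref{sup-h-2}, \eqref{sup-f} and Hölder is of order $O(\lambda^{-1}\|\nabla w\|)$ once $d$ is bounded below.

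Next I would rearrange $\mathcal S_{a+\epsilon V}[u_\epsilon]\le S$ into the form
$$
Q(w)+\mathrm{(linear\text{-}in\text{-}}w\text{ cross terms})\le S A^{1/3}-\|\nabla PU\|^2-\!\int_\Omega(a+\epsilon V)PU^2+o(\|\nabla w\|^2),
$$
where $Q(w):=\int|\nabla w|^2+\int a w^2-15\!\int U^4 w^2$ and $A=\int PU^6$. Since $\lambda d\to\infty$ by Proposition \ref{prop-app-min}, Lemma \ref{coercivity} gives $Q(w)\ge c\|\nabla w\|^2$, and Young's inequality $2ab\le \tfrac{c}{2}a^2+\tfrac{2}{c}b^2$ absorbs every linear-in-$w$ contribution. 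The right-hand side is then computed using Lemmas \ref{lem-V}, \ref{lem-uh}, \ref{lem-uh2}, \ref{lem-int-a} exactly as in Section \ref{sec-upperb} but with $PU$ in place of $\psi_{x,\lambda}$; the outcome is an upper bound of order $\phi_a(x)\lambda^{-1}+\epsilon\lambda^{-1}+O(\lambda^{-2}\ln\lambda)$. Under $d^{-1}=O(1)$ and the fact that $\phi_a$ is bounded on compact subsets of $\Omega$, this produces \eqref{esp-w} and simultaneously $S-\mathcal S_{a+\epsilon V}[u_\epsilon]=O(\lambda^{-1})$; combining with $\mathcal S_{a+\epsilon V}[u_\epsilon]-S(a+\epsilon V)=o(S-S(a+\epsilon V))$ from \eqref{appr-min} gives both halves of \eqref{eq:energybound}. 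To rule out $d\to0$, suppose the contrary along a subsequence: then $x_\epsilon\to\partial\Omega$, and \eqref{sup-h} together with the resolvent identity \eqref{resolvent} force $\phi_a(x_\epsilon)\ge d^{-1}-O(1)\to\infty$; the leading positive term $(S/3)^{-1/2}4\pi\phi_a(x)\lambda^{-1}$ in the $PU$-energy then dominates the $\epsilon$-perturbation $\epsilon\lambda^{-1}\int V G_a^2(x,\cdot)\,dy$ and every other correction, violating $\mathcal S_{a+\epsilon V}[u_\epsilon]\le S$.

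\paragraph{Main obstacle.} The delicate point is the treatment of the linear cross-terms: $\int PU^5 w$ is handled by the built-in orthogonality $\int U^5 w=0$, but the cross-term $\int(a+\epsilon V)PU\cdot w$ has no such cancellation and must be absorbed directly via Sobolev--Hölder, which requires $L^\infty$-control on $PU$ and hence on $H_0(x,\cdot)$, itself dependent on $d^{-1}$. This creates a mild circularity between \eqref{esp-w} and \eqref{eq-d}, which I resolve by carrying them out in parallel, using the nonnegativity of $\phi_a$ (Corollary \ref{cor-2}) as the qualitative lever that makes blow-up of $d^{-1}$ incompatible with $\mathcal S_{a+\epsilon V}[u_\epsilon]\le S$.
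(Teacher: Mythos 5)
Your overall strategy (expand $\mathcal S_{a+\epsilon V}[u_\epsilon]$ using the decomposition $u_\epsilon=\alpha(PU_{x,\lambda}+w)$, invoke orthogonality, coercivity from Lemma~\ref{coercivity}, Young's inequality for the cross terms, and compare with $S$) is indeed the approach of the paper, via Lemma~\ref{expini}. However, your treatment of \eqref{eq-d} has a genuine gap, and the ``mild circularity'' you worry about is avoidable, which is exactly what the paper's argument exploits.

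\textbf{The gap in \eqref{eq-d}.} You rule out $d\to 0$ by arguing that \eqref{sup-h} and the resolvent identity force $\phi_a(x_\epsilon)\geq d^{-1}-O(1)\to\infty$. But \eqref{sup-h} says $\|H_0(x,\cdot)\|_\infty = d(x)^{-1}$, and since $\phi_0(x)=H_0(x,x)\leq \|H_0(x,\cdot)\|_\infty$, this gives the \emph{upper} bound $\phi_0(x)\leq d^{-1}$ — the opposite of what you need. The required \emph{lower} bound is $\phi_0(x)\geq c\,d(x)^{-1}$ for some universal $c>0$; this is a nontrivial fact that uses the $C^2$ regularity of $\partial\Omega$ and is not contained in \eqref{sup-h}. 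The paper cites it explicitly (as \cite[(2.8)]{rey2} or \cite[Lem.~8.3]{Fl}) and then uses it quantitatively: writing $c:=\inf_y d(y)\phi_0(y)>0$, Lemma~\ref{expini} combined with Lemma~\ref{coercivity} yields
\[
\mathcal S_{a+\epsilon V}[u_\epsilon] \geq S + \bigl((S/3)^{-1/2}4\pi c+o(1)\bigr)(d\lambda)^{-1} + \bigl((S/3)^{-1/2}\rho+o(1)\bigr)\|\nabla w\|^2 + \mathcal O(\lambda^{-1/2}\|\nabla w\|),
\]
uniformly in $d$, so that after absorbing the cross term by Young's inequality one reads off all three conclusions at once from $C\lambda^{-1}\geq (1+o(1))(S-S(a+\epsilon V)) + c_1(d\lambda)^{-1}+c_2\|\nabla w\|^2$ and the non-negativity of the three terms on the right. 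No contradiction argument and no case distinction on whether $d\to 0$ is needed.

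\textbf{On the ``circularity''.} The cross term $\int_\Omega(a+\epsilon V)PU_{x,\lambda}\,w\,dy$ does not actually require $L^\infty$ control on $PU_{x,\lambda}$ (and hence on $H_0(x,\cdot)$); H\"older with $\|PU_{x,\lambda}\|_{6/5}\|w\|_6$ suffices, and $\|PU_{x,\lambda}\|_{6/5}=\mathcal O(\lambda^{-1/2})$ holds uniformly in $x\in\Omega$ (see \eqref{eq:zm65}), so this bound is independent of $d$. Thus the expansion in Lemma~\ref{expini} is valid without any a~priori bound on $d$, and the three bounds are obtained simultaneously rather than ``in parallel.'' A minor additional confusion: using $PU_{x,\lambda}$ in place of $\psi_{x,\lambda}$ the leading $\lambda^{-1}$ coefficient is $\phi_0(x)$, not $\phi_a(x)$; these differ by a bounded quantity via the resolvent identity, so the qualitative conclusion survives, but the paper never needs to pass to $\phi_a$ in this proposition.
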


The bounds \eqref{esp-w} and \eqref{eq-d} were shown in \cite[Lem.~2.2 and Thm.~1.1]{esp} in the case where $u_\epsilon$ is a minimizer for $S(a+\epsilon V)$. Since the proof in \cite{esp} uses the Euler--Lagrange equation satisfied by minimizers, this proof is not applicable in our case. We will replace the use of the Euler--Lagrange equation by a suitable expansion of $\mathcal S_{a+\epsilon V}[u_\epsilon]$, which is carried out in Subsection \ref{sec:expini}. The other ingredient in the proof of \cite[Lem.~2.2]{esp} and in our proof is the coercivity of a certain quadratic form, see Lemma \ref{coercivity} in Subsection \ref{sec:coercivity}. Finally, in Subsection \ref{sec:wboundproof} we will prove Proposition \ref{wbound}.


\subsection{A first expansion}\label{sec:expini}

In this subsection, we shall prove the following lemma.

\begin{lem}\label{expini}
As $\epsilon\to 0$,
\begin{align*}
\mathcal S_{a+\epsilon V}[u_\eps]
& = S + (S/3)^{-1/2} 4\pi \phi_0(x)\lambda^{-1} + (S/3)^{-1/2} \int_\Omega (|\nabla w|^2 + a w^2 - 15\, U_{x,\lambda}^4 w^2)\,dy \\ 
& \quad + O\left(\lambda^{-1/2} \|\nabla w\|\right) + o((d\lambda)^{-1}) + o(\|\nabla w\|^2) \,.
\end{align*}
\end{lem}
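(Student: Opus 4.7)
The plan is to exploit the scale invariance $\mathcal{S}_{a+\epsilon V}[\alpha u] = \mathcal{S}_{a+\epsilon V}[u]$ to reduce to the test function $v := PU_{x,\lambda} + w$, and then to expand the numerator $N = \int_\Omega (|\nabla v|^2 + (a+\epsilon V) v^2)\,dy$ and the denominator $D = \int_\Omega v^6\,dy$ separately before forming the quotient.

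For the numerator, I split $N = N_{PU} + 2N_{\rm cr} + N_w$ into pure-$PU$, cross, and pure-$w$ contributions. The orthogonality $w \in T_{x,\lambda}^\perp$ eliminates the kinetic part of $N_{\rm cr}$. Using $-\Delta PU = 3U^5$ from \eqref{el} gives $\int |\nabla PU|^2 = 3\int U^5 PU$; inserting the decomposition $PU = U_{x,\lambda} - \lambda^{-1/2} H_0(x,\cdot) - f_{x,\lambda}$ from \eqref{PU exp} and applying Lemma \ref{lem-uh} with $b = 0$ yields the correction $-4\pi\phi_0(x)\lambda^{-1}$, with remainders absorbable into $o((d\lambda)^{-1})$ via the bound \eqref{sup-f} on $f_{x,\lambda}$. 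The terms $\int (a+\epsilon V) PU^2$ and the potential cross term $\int (a+\epsilon V) PU\, w$ are controlled via Cauchy--Schwarz together with $\|PU\|_{L^2(\Omega)} = O(\lambda^{-1/2})$ and the Sobolev embedding $\|w\|_6 \lesssim \|\nabla w\|$, producing errors of order $O(\lambda^{-1/2}\|\nabla w\|)$ and $o((d\lambda)^{-1})$.

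For the denominator I expand $(PU+w)^6$ by the binomial theorem. The leading $\int PU^6$ is computed by the same $PU$-decomposition combined with Lemmas \ref{lem-uh} and \ref{lem-uh2}, giving $D_0 - 8\pi \phi_0(x)\lambda^{-1} + o((d\lambda)^{-1})$ where $D_0 := (S/3)^{3/2}$. The pivotal cross term $\int PU^5 w$ is reduced via the identity $\int U^5 w = 0$, which follows from $w \perp PU$ in $H^1_0(\Omega)$ and $-\Delta PU = 3U^5$; the residual $\int(PU^5 - U^5)w$ is bounded by $\|U - PU\|_\infty \cdot \|U^4\|_{6/5} \cdot \|w\|_6 = O((d\lambda)^{-1}\|\nabla w\|) = o((d\lambda)^{-1})$. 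The quadratic term $15\int PU^4 w^2$ is replaced by $15\int U^4 w^2$ with error $o(\|\nabla w\|^2)$, and higher-order powers $\int PU^{6-k} w^k$ for $k \ge 3$ are $o(\|\nabla w\|^2)$ by H\"older and Sobolev.

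Writing $N = 3 D_0 + N'$ and $D = D_0 + D'$, a first-order Taylor expansion of $D^{-1/3}$ about $D_0$ yields $\mathcal{S}_{a+\epsilon V}[v] = S + D_0^{-1/3}(N' - D') + \text{errors}$. The $\lambda^{-1}$-contributions combine as $(-4\pi\phi_0) - (-8\pi\phi_0) = 4\pi\phi_0(x)$, and the quadratic-in-$w$ contributions combine as $\int(|\nabla w|^2 + aw^2) - 15\int U^4 w^2$; multiplication by $D_0^{-1/3} = (S/3)^{-1/2}$ produces the claimed formula. The main obstacle will be the careful bookkeeping of error terms: since at this stage we only know $\lambda d \to \infty$ and $\|\nabla w\| = o(1)$ (not yet the stronger bounds of Proposition \ref{wbound}), one must verify that each remainder arising from the $PU$-expansion, from the perturbation $a+\epsilon V$, and from cross terms with $w$ genuinely fits within the stated error classes $O(\lambda^{-1/2}\|\nabla w\|)$, $o((d\lambda)^{-1})$, and $o(\|\nabla w\|^2)$.
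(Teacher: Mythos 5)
Your approach is essentially identical to the paper's: expand numerator and denominator separately around $PU_{x,\lambda}$, use the orthogonality $w\in T_{x,\lambda}^\bot$ both to kill the kinetic cross term in the numerator and to get $\int_\Omega U_{x,\lambda}^5 w\,dy = 0$ in the denominator, and form the quotient by a first-order Taylor expansion of $D^{-1/3}$. The bookkeeping you describe for the potential terms ($\|PU_{x,\lambda}\|_{6/5}$ vs.\ $\|PU_{x,\lambda}\|_2$, both $\mathcal O(\lambda^{-1/2})$) and for the binomial remainders in the denominator matches what the paper does.

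One technical wrinkle you should be more careful about: you propose to derive the expansions of $\int_\Omega|\nabla PU_{x,\lambda}|^2\,dy$ and $\int_\Omega PU_{x,\lambda}^6\,dy$ by inserting the decomposition \eqref{PU exp} and invoking Lemma \ref{lem-uh} with $b=0$. But Lemma \ref{lem-uh} is stated only \emph{uniformly for $x$ in compact subsets of $\Omega$}, and at this stage we only know $\lambda d\to\infty$ — the concentration point $x$ may approach $\partial\Omega$. To get the remainder $o((d\lambda)^{-1})$ uniformly, one needs the boundary-adapted versions \eqref{eq:nablapu} and \eqref{eq:pu6} from Appendix~\ref{sec-app-a}, which rely on the more careful estimates in \cite[(B.3)]{rey2} and \cite[Thm.~1.1]{esp} rather than on Lemma \ref{lem-uh}. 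With those substitutions your argument closes. (As a minor aside, your bound on $\int_\Omega(a+\epsilon V)PU_{x,\lambda}^2\,dy$ is really $\mathcal O(\lambda^{-1})$, not literally $o((d\lambda)^{-1})$ — the paper has the same slight imprecision, which is harmless because in the application to Proposition \ref{wbound} an $\mathcal O(\lambda^{-1})$ error is absorbed anyway when the Cauchy--Schwarz cross term is split as $\delta\|\nabla w\|^2+(4\delta)^{-1}\lambda^{-1}$.)
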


\begin{proof}[Proof of Lemma \ref{expini}]
We will expand separately the numerator and the denominator in $\mathcal S_{a+\epsilon V}[u_\eps]$.

\medskip

\emph{Expansion of the numerator.}
Since $w$ is orthogonal to $PU$, we have
\begin{equation}
\label{eq:expnumortho}
\alpha^{-2} \int_\Omega |\nabla u_\epsilon|^2\,dy = \int_\Omega |\nabla PU_{x,\lambda}|^2\,dy + \int_\Omega |\nabla w|^2\,dy \,.
\end{equation}
The first term on the right side is computed in \eqref{eq:nablapu}. The other terms in the numerator are
$$
\alpha^{-2} \int_\Omega (a+\epsilon V) u_\epsilon^2\,dy = \int_\Omega (a+\epsilon V)PU_{\lambda, x}^2 \,dy + 2 \int_\Omega (a+\eps) PU_{\lambda, x} w \,dy+ \int_\Omega (a+\epsilon V) w^2\,dy \,.
$$
Since $0\leq PU_{x,\lambda}\leq U_{x,\lambda}\leq\lambda^{-1/2} |x-y|$, see \cite[Prop. 1]{rey2}, we have
$$
\left| \int_\Omega (a+\epsilon V)PU_{x,\lambda}^2 \,dy \right| \leq \|a + \epsilon V\|_\infty \lambda^{-1} \int_\Omega \frac{dy}{|x-y|^2} =\mathcal O(\lambda^{-1}) \,.
$$
Clearly,
$$
\epsilon\left| \int_\Omega V w^2\,dy \right| \leq \epsilon \|V\|_\infty \|w\|^2 \lesssim \epsilon \|V\|_\infty \|\nabla w\|^2 = o(\|\nabla w\|^2) \,,
$$
and, by \eqref{eq:zm65},
$$
\left| \int_\Omega (a+\epsilon V)PU_{x,\lambda}w \,dx \right| \leq \|a+\epsilon V\|_\infty \|PU_{x,\lambda}\|_{6/5} \|w\|_6 = \mathcal O(\lambda^{-1/2} \|\nabla w\|) \,.
$$
To summarize, the numerator is $\alpha^2$ times
$$
3^{-1/2} S^{3/2} - 4\pi \phi_0(x)\lambda^{-1} + \int_\Omega \left( |\nabla w|^2 + a w^2\right)dy +\mathcal O\left(\lambda^{-1/2} \|\nabla w\| \right) + o((\lambda d)^{-1}) + o( \|\nabla w\|^2) \,.
$$

\emph{Expansion of the denominator.}
We have
$$
\alpha^{-6} \int_\Omega u_\epsilon^6\,dy = \int_\Omega PU_{x,\lambda}^6\,dy + 6 \int_\Omega PU_{x,\lambda}^5 w\,dy + 15 \int_\Omega PU_{x,\lambda}^4 w^2\,dy + \mathcal O(\|\nabla w\|^3) \,.
$$
The first term on the right side is computed in \eqref{eq:pu6}. Moreover, abbreviating $\phi_{x,\lambda}:= \lambda^{-1/2} H_0(x,\cdot)+f_{x,\lambda}$, so that, by \eqref{PU exp}, $PU_{x,\lambda} = U_{x,\lambda}-\phi_{x,\lambda}$, we find
$$
\int_\Omega PU_{x,\lambda}^5 w \,dy = \int_\Omega U_{x,\lambda}^5 w \,dy + \mathcal O \left( \int_\Omega U_{x,\lambda}^4 \phi_{x,\lambda} |w| \,dy + \int_\Omega \phi_{x,\lambda}^5 |w|\,dy \right).
$$
(Note that $\phi_{x,\lambda}\geq 0$, since $PU_{x,\lambda}\leq U_{x,\lambda}$ by \cite[Prop.\! 1 (a)]{rey2}.) By \eqref{el}, \eqref{eq-pu}, the fact that $w$ vanishes on the boundary and since $w\in T_{x,\lambda}^\bot$, we have
$$
\int_\Omega U_{x,\lambda}^5 w\,dy = \frac13 \int_\Omega (-\Delta U_{x,\lambda}) w\,dy = \frac13 \int_\Omega \nabla PU_{x,\lambda}\cdot\nabla w\,dy = 0 \,.
$$
Also, by the equation after \cite[(10)]{esp},
$$
\int_\Omega U_{x,\lambda}^4 \phi_{x,\lambda} |w| \,dy + \int_\Omega \phi_{x,\lambda}^5 |w|\,dy = \mathcal O( (d\lambda)^{-1} \|\nabla w\|) =o((d\lambda)^{-1}) \,.
$$
Finally,
$$
\int_\Omega PU_{x,\lambda}^4 w^2 \,dy = \int_\Omega U_{x,\lambda}^4 w^2 \,dy + \mathcal O\left( \int_\Omega U_{x,\lambda}^3 \phi_{x,\lambda} w^2 \,dy + \int_\Omega \phi_{x,\lambda}^4 w^2\,dy \right)
$$
and, since $\|\phi_{x,\lambda}\|_6 = \mathcal O((d\lambda)^{-1/2})$ by \cite[Prop.\,1 (c)]{rey2},
$$
\int_\Omega U_{x,\lambda}^3 \phi_{x,\lambda} w^2 \,dy+ \int_\Omega \phi_{x,\lambda}^4 w^2\,dy = o(\|\nabla w\|^2) \,.
$$
To summarize, we have shown that
\begin{align*}
\alpha^{-6} \int_\Omega u_\epsilon^6 \,dy & = (S/3)^{3/2} -8\pi \phi_0(x)\lambda^{-1} +  15 \int_\Omega U_{x,\lambda}^4 w^2\,dy + o((d\lambda)^{-1})+ o(\|\nabla w\|^2)
\end{align*}
and therefore, by the rough bound $\int_\Omega U_{x,\lambda} w^2\,dy \leq \|U_{x,\lambda}\|_6^4 \|w\|_6^2 \lesssim \|U_{x,\lambda}\|_6^4 \|\nabla w\|^2 = o(1)$,
\begin{align*}
\alpha^2 \left(\int_\Omega u_\epsilon^6 \,dy\right)^{-1/3} & = \left( \frac S3\right)^{-\frac 12} + \left( \frac S3\right)^{-2} \frac{8\pi}{3} \phi_0(x)\lambda^{-1}
 - 45  S^{-2} \int_\Omega U_{x,\lambda}^4 w^2\,dy \\
 & \quad + o((d\lambda)^{-1}) + o(\|\nabla w\|^2) \,.
\end{align*}

The lemma follows immediately from the expansions of the numerator and the denominator.
\end{proof}


\subsection{Coercivity}\label{sec:coercivity}

We will frequently use the following bound from \cite[Lem.~2.2]{esp}. 

\begin{lem}\label{coercivity}
There are constants  $T_*<\infty$ and $\rho>0$ such that for all $x\in\Omega$, all $\lambda>0$ with $d\lambda\geq T_*$ and all $v\in T_{x,\lambda}^\bot$,
\begin{equation} \label{eq-gapesposito}
\int_\Omega \left( |\nabla v|^2 + av^2 - 15\, U_{x,\lambda}^4 v^2\right)dy \geq \rho \int_\Omega |\nabla v|^2\,dy \,.
\end{equation}
\end{lem}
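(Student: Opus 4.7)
The plan is to argue by contradiction via rescaling, combining the classical spectral theory of the linearized Sobolev operator $-\Delta - 15 U^4$ on $\dot H^1(\R^3)$ (where $U := U_{0,1}$) with the criticality of $a$. Suppose the statement fails. Then, after extraction, there exist sequences $x_n \in \Omega$ and $\lambda_n > 0$ with $d_n \lambda_n \to \infty$, together with $v_n \in T_{x_n,\lambda_n}^\bot$ satisfying $\|\nabla v_n\|_2 = 1$ and $\int_\Omega (|\nabla v_n|^2 + a v_n^2 - 15 U_{x_n,\lambda_n}^4 v_n^2)\,dy = o(1)$. I would rescale by $\tilde v_n(\tilde y) := \lambda_n^{-1/2}\, v_n(x_n + \tilde y/\lambda_n)$, extended by zero outside $\lambda_n(\Omega - x_n)$. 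The rescaling is conformal, so $\|\nabla \tilde v_n\|_2 = 1$ and $\int_{\R^3} U^4 \tilde v_n^2 = \int_\Omega U_{x_n,\lambda_n}^4 v_n^2$. After further extraction, $v_n \rightharpoonup v_*$ in $H^1_0(\Omega)$ (so strongly in $L^2$ by Rellich) and $\tilde v_n \rightharpoonup \tilde v$ weakly in $\dot H^1(\R^3)$.

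Using the decomposition $PU_{x,\lambda} = U_{x,\lambda} - \lambda^{-1/2} H_0(x,\cdot) - f_{x,\lambda}$ from \eqref{PU exp}, together with $\|H_0(x,\cdot)\|_\infty \leq d^{-1}$ and $\|f_{x,\lambda}\|_\infty = \mathcal{O}(\lambda^{-5/2} d^{-3})$, and $d_n\lambda_n \to \infty$, one checks that the rescaled versions of $PU_{x_n,\lambda_n}$ and of its $\lambda$- and $x_i$-derivatives converge strongly in $\dot H^1(\R^3)$ to $U$, $\partial_\lambda U|_{\lambda=1,x=0}$, $\partial_{x_i} U|_{\lambda=1,x=0}$. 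Since the conformal rescaling intertwines the $H^1_0(\Omega)$- and $\dot H^1(\R^3)$-inner products, the five orthogonality conditions $v_n \perp T_{x_n,\lambda_n}$ pass to the limit: $\tilde v$ is $\dot H^1$-orthogonal to $\mathrm{span}\{U,\, \partial_\lambda U,\, \partial_{x_i} U\,(i=1,2,3)\}$. The classical spectral analysis of $-\Delta - 15 U^4$ on $\dot H^1(\R^3)$ (a single negative eigendirection, namely $U$; a four-dimensional kernel spanned by $\partial_\lambda U$ and the three $\partial_{x_i} U$; positive otherwise) then supplies a universal gap $\int_{\R^3}(|\nabla \tilde v|^2 - 15 U^4 \tilde v^2) \geq \rho_0 \|\nabla \tilde v\|_2^2$ for some absolute $\rho_0 > 0$.

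To pass to the limit in the full quadratic form, I would split $v_n = v_* + (v_n - v_*)$ and use: (i) $\int U_{x_n,\lambda_n}^4 v_*^2 \to 0$, from $\|U_{x_n,\lambda_n}^4\|_1 = \pi^2/\lambda_n$ and $C_c^\infty$-approximation of $v_*$ in $L^6$; (ii) a vanishing cross term by Cauchy--Schwarz; (iii) $\int U_{x_n,\lambda_n}^4 (v_n - v_*)^2 \to \int_{\R^3} U^4 \tilde v^2$ by sequential weak continuity on $\dot H^1(\R^3)$ of $w \mapsto \int U^4 w^2$ (Rellich on $|\tilde y|\leq R$ plus tail smallness from $U^4 \in L^{3/2}(\R^3)$). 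Also $\int a v_n^2 \to \int a v_*^2$ by Rellich in $\Omega$, and Brezis--Lieb together with weak lower semicontinuity after rescaling gives $\|\nabla v_*\|^2 + \|\nabla \tilde v\|_2^2 \leq 1$. Combining, the hypothesis becomes
\[
1 + \int_\Omega a v_*^2\,dy = 15 \int_{\R^3} U^4 \tilde v^2\,d\tilde y \leq (1-\rho_0)\|\nabla \tilde v\|_2^2 \leq (1-\rho_0)\bigl(1 - \|\nabla v_*\|^2\bigr),
\]
which rearranges to $\bigl(\|\nabla v_*\|^2 + \int_\Omega a v_*^2\bigr) + \rho_0 \bigl(1 - \|\nabla v_*\|^2\bigr) \leq 0$. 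Criticality forces $\|\nabla v_*\|^2 + \int a v_*^2 \geq S \|v_*\|_6^2 \geq 0$, and the second summand is also $\geq 0$, so both must vanish: simultaneously $v_* = 0$ (whence $\|\nabla v_*\|^2 = 0$) and $\|\nabla v_*\|^2 = 1$ --- a contradiction. The main obstacle is the spectral-gap input for $-\Delta - 15 U^4$, which is a classical fact (Bianchi--Egnell type) but fundamental; the potentially delicate $L^2$-treatment of $v_*$ is handled cleanly by criticality, bypassing any need for Druet's non-attainment result at this stage.
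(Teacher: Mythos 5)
Your proposal is correct and its overall logic is sound, but it takes a genuinely different route from the one the paper relies on. The paper does not prove the lemma itself; it invokes Esposito \cite{esp}, whose argument (as the paper indicates) starts from Rey's domain inequality (D.1) — namely $\int_\Omega(|\nabla v|^2 - 15U_{x,\lambda}^4 v^2)\,dy \geq \tfrac47\|\nabla v\|^2$ for all $v\in T_{x,\lambda}^\bot$, valid already for all $\lambda>0$ — and then uses a compactness argument only to absorb the lower-order perturbation $\int a v^2$. You instead re-derive the full spectral gap from scratch by blowing up at scale $\lambda_n$ and invoking the classical non-degeneracy of $U$ on $\dot H^1(\R^3)$, which buys a self-contained proof that makes transparent where the hypothesis $d\lambda\to\infty$ enters (strong convergence of the rescaled, normalized basis of $T_{x,\lambda}$) and where criticality of $a$ enters (to kill the bulk weak limit $v_*$). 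The cost is that you must justify a subtler convergence statement than Esposito needs: the rescaled functions $PU_{x_n,\lambda_n}$, $\partial_\lambda PU_{x_n,\lambda_n}$, $\partial_{x_i} PU_{x_n,\lambda_n}$ have very different $\dot H^1$-norm scalings (respectively $\sim 1$, $\sim\lambda^{-1}$, $\sim\lambda$), so the passage of orthogonality to the limit requires first \emph{normalizing} each basis vector and then proving strong $\dot H^1(\R^3)$-convergence of the normalized rescalings to the normalized $U$, $\partial_\lambda U$, $\partial_{x_i}U$; this in turn needs estimates not only on $H_0(x,\cdot)$ and $f_{x,\lambda}$ themselves (which you quote) but on $\nabla_y H_0$, $\nabla_y f_{x,\lambda}$ and their $\lambda$- and $x$-derivatives in $L^2$, all of which are in \cite[Prop.~1]{rey2} but should be spelled out since they are where $d\lambda\to\infty$ is consumed. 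Two small remarks on the final step: the contradiction hypothesis only gives $\limsup_n\int_\Omega(|\nabla v_n|^2 + av_n^2 - 15U_{x_n,\lambda_n}^4 v_n^2)\,dy\leq 0$ (after extraction), so your display should read $1+\int_\Omega a v_*^2\,dy \leq 15\int_{\R^3}U^4\tilde v^2$ rather than an equality; and the conclusion that $v_*\equiv 0$ follows from $S\|v_*\|_6^2\leq\|\nabla v_*\|^2+\int av_*^2=0$ forcing $\|v_*\|_6=0$, which you leave implicit — worth a sentence. Modulo these expository points, the argument is correct and is an acceptable alternative to invoking \cite[(D.1)]{rey2} plus \cite{esp}.
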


The proof proceeds by compactness, using the inequality \cite[(D.1)]{rey2}
\begin{equation*}
\int_\Omega \left( |\nabla v|^2 - 15\, U_{x,\lambda}^4 v^2 \right)dy \geq \frac47 \int_\Omega |\nabla v|^2\,dy
\qquad\text{for all}\ v\in T_{x,\lambda}^\bot \,.
\end{equation*}
For details of the proof we refer to \cite{esp}.


\subsection{Proof of Proposition \ref{wbound}}\label{sec:wboundproof}

We combine the expansion from Lemma \ref{expini} with the coercivity bound from Lemma \ref{coercivity} and the fact that $c:=\inf_{y\in\Omega} \mathrm{dist}(y,\partial\Omega) \phi_0(y)>0$, see \cite[(2.8)]{rey2} or \cite[Lem.\! 8.3]{Fl}. (Note that this bound uses the $C^2$ assumption on $\partial\Omega$.) Thus,
\begin{align*}
\mathcal S_{a+\epsilon V}[u_\epsilon] \geq S + \left( (S/3)^{-1/2} 4\pi c + o(1) \right)(d\lambda)^{-1} + \left( (S/3)^{-1/2} \rho + o(1) \right) \|\nabla w\|^2 + \mathcal O(\lambda^{-1/2} \|\nabla w\|).
\end{align*}
Since $\lambda^{-1/2}\|\nabla w\|\leq \delta \|\nabla w\|^2 + (4\delta)^{-1} \lambda^{-1}$ for every $\delta>0$, we obtain, for all sufficiently small $\epsilon>0$ and some constants $c_1,c_2>0$ and $C<\infty$ independent of $\epsilon$,
$$
C\lambda^{-1} + \left( \mathcal S_{a+\epsilon V}[u_\epsilon] - S(a+\epsilon V) \right)
\geq S- S(a+\epsilon V) + c_1 (d\lambda)^{-1} + c_2 \|\nabla w\|^2 \,.
$$
By assumption \eqref{appr-min}, this becomes
$$
C\lambda^{-1} \geq (1+ o(1)) \left( S- S(a+\epsilon V)\right) + c_1 (d\lambda)^{-1} + c_2 \|\nabla w\|^2 \,.
$$
Since all three terms on the right side are non-negative, we obtain \eqref{esp-w}, \eqref{eq-d} and the first bound in \eqref{eq:energybound}. The second bound in \eqref{eq:energybound} follows from the first one by assumption \eqref{appr-min}. This completes the proof of the proposition.

\section{\bf A priori bounds reloaded}
\label{sec-reloaded}

\subsection{Statement and heuristics for the improved a priori bound}

In order to prove a sufficiently precise lower bound on $S(a+\epsilon V)$ we need more detailed information on
the almost minimizers $u_\eps$. Here we extract the leading term from the remainder term $w = w_\eps$ in \eqref{u-rey}. 

\begin{prop}  \label{thm-q}
One has, as $\epsilon\to 0$,
\begin{equation}
\label{eq:concpoint}
\lambda(S-S(a+\epsilon V)) = o(1) \,,
\qquad
\phi_a(x) = o(1)
\end{equation}
and
\begin{equation} \label{w-fkk}
w = - \lambda^{-1/2} (H_a(x, \cdot) - H_0(x, \cdot)) + q
\qquad\text{with}\qquad 
\|\nabla q\| = o(\lambda^{-1/2}) \,.
\end{equation}
\end{prop}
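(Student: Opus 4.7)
The plan is to sharpen Lemma \ref{expini} to order $\lambda^{-1}$, taking advantage of the now available bound $\|\nabla w\| = \mathcal O(\lambda^{-1/2})$ from Proposition \ref{wbound}. Lemma \ref{expini} only brought out the Robin function $\phi_0$ of the pure Laplacian at order $\lambda^{-1}$; to upgrade this to $\phi_a$ (as in the upper bound computation for $\psi_{x,\lambda}$ in Theorem \ref{exppsi}), I would reorganize the remainder $w$ around its expected leading part. Concretely, I would decompose
\begin{equation*}
w = w_0 + r \,, \qquad w_0 := -\lambda^{-1/2}\, \Pi_{x,\lambda}^\bot (H_a(x,\cdot) - H_0(x,\cdot)) \,,
\end{equation*}
so that $r \in T_{x,\lambda}^\bot$ as well. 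Note that $\alpha^{-1}u_\epsilon = PU_{x,\lambda} + w_0 + r$ differs from $\psi_{x,\lambda} + r$ only by the projection correction $\lambda^{-1/2}\Pi_{x,\lambda}(H_a(x,\cdot) - H_0(x,\cdot))$, whose $H^1_0$-gradient norm is $\mathcal O(\lambda^{-1})$; this can be checked directly using Lemma \ref{lem-uh} together with the standard $H^1$-scalings of the basis vectors $PU_{x,\lambda}, \partial_\lambda PU_{x,\lambda}, \partial_{x_i} PU_{x,\lambda}$ of $T_{x,\lambda}$.

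\textbf{The expansion.} I would then repeat the argument of Lemma \ref{expini}, now tracking rather than discarding the cross terms between $PU_{x,\lambda}$ and $w$ that were previously bounded by $\mathcal O(\lambda^{-1/2}\|\nabla w\|) = \mathcal O(\lambda^{-1})$. The essential new term is $2\int_\Omega a\, PU_{x,\lambda}\, w\,dy$, split as a $w_0$-part plus an $r$-linear part (the $\epsilon V$ analogue is lower order, $\mathcal O(\epsilon/\lambda) = o(\lambda^{-1})$). Using \eqref{Ha-pde}, the resolvent identity \eqref{resolvent}, and Lemma \ref{lem-uh} applied both with $b=a$ and $b=0$, the $w_0$-contribution should supply exactly the missing $4\pi(\phi_a(x)-\phi_0(x))\lambda^{-1}$ needed to turn the $\phi_0$ coefficient of Lemma \ref{expini} into $\phi_a$. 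The $r$-linear terms are $\mathcal O(\lambda^{-1/2}\|\nabla r\|)$ by Cauchy--Schwarz and are absorbed into the positive quadratic form on $r$ via Young's inequality. The denominator is treated analogously, Taylor expanding $(PU_{x,\lambda} + w_0 + r)^6$ and using $\int_\Omega U_{x,\lambda}^5 r\,dy = 0$ coming from $r\in T_{x,\lambda}^\bot$. The net outcome I expect is
\begin{equation*}
\mathcal S_{a+\epsilon V}[u_\epsilon] = S + (S/3)^{-1/2}\Big(4\pi \phi_a(x) \lambda^{-1} + \int_\Omega (|\nabla r|^2 + ar^2 - 15\, U_{x,\lambda}^4 r^2)\,dy\Big) + o(\lambda^{-1}) \,.
\end{equation*}

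\textbf{Coercivity and the main obstacle.} Once this expansion is secured, the three conclusions of the proposition are immediate. Both summands on the right are non-negative: $\phi_a(x)\geq 0$ by Corollary \ref{cor-2}, while Lemma \ref{coercivity} gives $\int_\Omega(|\nabla r|^2 + ar^2 - 15 U_{x,\lambda}^4 r^2)\,dy \geq \rho\|\nabla r\|^2 \geq 0$ (whose hypothesis $d\lambda \geq T_*$ is ensured by \eqref{eq-d} and $\lambda\to\infty$). On the other hand, by \eqref{eq:energybound} and \eqref{appr-min0}, $\mathcal S_{a+\epsilon V}[u_\epsilon] = S(a+\epsilon V) + o(\lambda^{-1}) \leq S + o(\lambda^{-1})$. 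Matching the two sides forces each non-negative contribution to be $o(\lambda^{-1})$, giving simultaneously $\phi_a(x) = o(1)$, $\lambda(S - S(a+\epsilon V)) = o(1)$, and $\|\nabla r\|^2 = o(\lambda^{-1})$. The quantity $q$ of the proposition equals $\lambda^{-1/2}\Pi_{x,\lambda}(H_a(x,\cdot)-H_0(x,\cdot)) + r$, and combining the projection estimate from Step 1 with $\|\nabla r\| = o(\lambda^{-1/2})$ yields $\|\nabla q\| = \mathcal O(\lambda^{-1}) + o(\lambda^{-1/2}) = o(\lambda^{-1/2})$. The main obstacle I anticipate is the bookkeeping in the expansion step, particularly verifying that the cross term $2\int_\Omega a\, PU_{x,\lambda}\, w_0\,dy$ genuinely produces the $4\pi(\phi_a-\phi_0)\lambda^{-1}$ upgrade; this is a computation in the same spirit as, but somewhat more delicate than, the one carried out for the trial function $\psi_{x,\lambda}$ in Section \ref{sec-upperb}.
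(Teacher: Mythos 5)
Your overall strategy is genuinely different from the paper's and, with one fix, does work. The paper keeps $w$ intact and completes the square against $-\lambda^{-1/2}(H_a-H_0)$, producing $q=w+\lambda^{-1/2}(H_a-H_0)\notin T_{x,\lambda}^\perp$; it then handles the troublesome $-15\int U_{x,\lambda}^4 w^2$ via a spectral cutoff (Lemmas \ref{lem-regular} and \ref{lem-aux}) before completing the square in Lemma \ref{prop-1}. You instead complete the square against the \emph{projected} shift $w_0=-\lambda^{-1/2}\Pi_{x,\lambda}^\perp(H_a-H_0)$, so that $r:=w-w_0$ stays in $T_{x,\lambda}^\perp$ and Lemma \ref{coercivity} applies directly, sidestepping the whole spectral-cutoff machinery. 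The price is that one must verify that the order-$\lambda^{-1}$ contribution of the discarded piece $\lambda^{-1/2}\Pi_{x,\lambda}(H_a-H_0)$ to numerator and denominator cancels in the quotient --- your sketch asserts this rather than proves it, but it does go through: the perturbation of the numerator and of the denominator at order $\lambda^{-1}$ are both equal to $6\beta(S/3)^{3/2}\lambda^{-1}+o(\lambda^{-1})$ (with $\beta$ as in Lemma \ref{lem-bg}), and since $N_0/D_0=3+o(1)$ they cancel. One also needs the sharpened estimate $\int_\Omega U_{x,\lambda}^4 w_0^2\,dy=\mathcal O(\lambda^{-2})$ (which follows from $\|H_a(x,\cdot)-H_0(x,\cdot)\|_\infty\lesssim 1$ plus Lemma \ref{lem-uh2}, \emph{not} from a crude Sobolev--H\"older bound, which would only give $\mathcal O(\lambda^{-1})$); this is essential for $\int U_{x,\lambda}^4 w^2=\int U_{x,\lambda}^4 r^2+o(\lambda^{-1})$.

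There is, however, a genuine flaw in the step where you say the ``$r$-linear terms are $\mathcal O(\lambda^{-1/2}\|\nabla r\|)$ by Cauchy--Schwarz and are absorbed into the positive quadratic form on $r$ via Young's inequality.'' Young's inequality applied to an $\mathcal O(\lambda^{-1/2}\|\nabla r\|)$ cross term yields $\delta\|\nabla r\|^2 + C\delta^{-1}\lambda^{-1}$; the residual $C\delta^{-1}\lambda^{-1}$ is order $\lambda^{-1}$ with the \emph{wrong} sign and is not $o(\lambda^{-1})$, so it destroys the conclusion $\phi_a(x)=o(1)$. The right observation is that these linear-in-$r$ terms \emph{cancel}: since $(-\Delta+a)w_0 = -\lambda^{-1/2}aG_0(x,\cdot) - (-\Delta+a)\Pi_{x,\lambda}w^*$ with $w^*=-\lambda^{-1/2}(H_a(x,\cdot)-H_0(x,\cdot))$, the combination $2\int\nabla w_0\cdot\nabla r + 2\int a\,PU_{x,\lambda}\,r + 2\int a\,w_0\,r$ reduces, after using $\int a\,PU_{x,\lambda}\,r = \lambda^{-1/2}\int aG_0(x,\cdot)r + o(\lambda^{-2})$ and the orthogonality $\int\nabla\Pi_{x,\lambda}w^*\cdot\nabla r=0$, to $-2\int a\,\Pi_{x,\lambda}w^*\,r + o(\lambda^{-2})=\mathcal O(\lambda^{-3/2})$. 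No Young's inequality is needed. Analogous refinements (again relying on $\|H_a-H_0\|_\infty\lesssim 1$ through Lemmas \ref{lem-uh} and \ref{lem-uh2}, rather than $L^6$--$L^{6/5}$ duality) are required for the $r$-linear cross terms in the denominator. With these corrections, your approach yields the expansion you claim and the proposition follows as you describe; it is in fact an attractive alternative in that it avoids Lemmas \ref{lem-regular}, \ref{lem-aux} and \ref{prop-1} entirely, at the cost of an order-$\lambda^{-1}$ version of the numerator-denominator cancellation that the paper only establishes at order $\lambda^{-2}$ in Lemma \ref{expfinalq}.
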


Note that the second statement in \eqref{eq:concpoint} implies that $\phi_a(x_0)=0$ for the limit point $x_0$ in \eqref{lim-k}. In particular, together with Corollary \ref{cor-2}, we obtain $\min_\Omega \phi_a = 0$ for critical $a$, which is Druet's theorem \cite{dr}. Our proof, which is closely related to that by Esposito \cite{esp}, uses another theorem of Druet, which says that $S(a)$ is not attained for critical $a$ \cite[Step 1]{dr} (see Proposition \ref{prop-app-min}), but is otherwise independent of \cite{dr}.

The proof of Proposition \ref{thm-q} is given at the end of this section. Let us explain the heuristics behind the proof. In Lemma \ref{expint} we will derive the following expansion,
\begin{align} \label{2-Salt2} 
\mathcal S_{a+\epsilon V}[u_\eps] & = S + \lambda^{-1} \left(\frac S3\right)^{-\frac12} \left( 4\pi  \, \phi_a(x) + (4\pi)^{-1} \iint_{\Omega\times\Omega} G_0(x,y) a(y) G_a(y,y') a(y')G_0(y',x)\,dy\,dy' \right) \nonumber  \\
& \quad + \left(\frac S3\right)^{-\frac12} \int_\Omega \left( |\nabla w|^2 + aw^2 + 2 \lambda^{-1/2} a G_0(x,y)w - 15\, U_{x,\lambda}^4 w^2 \right) dy + o(\lambda^{-1})   \,.
\end{align}
Note that this is an improvement over the expansion in Lemma \ref{expini}, which only had a remainder $\mathcal O(\lambda^{-1})$. This improvement is possible thanks to the information from Proposition \ref{wbound}.

From the expansion \eqref{2-Salt2} we want to determine the asymptotic form of $w$. In order to (almost) minimize the quotient $\mathcal S_{a+\epsilon V}[u_\eps]$ the function $w$ will (almost) minimize the expression
$$
\int_\Omega \left( |\nabla w|^2 + aw^2 + 2 \lambda^{-1/2} a G_0(x,y)w - 15\, U_{x,\lambda}^4 w^2 \right) dy \,.
$$
This is quadratic and linear in $w$, so it can be minimized by `completing a square'. If the term $-15\, U_{x,\lambda}^4$ were absent, then the minimum would be 
$$
-\lambda^{-1} (4\pi)^{-1} \iint_{\Omega\times\Omega}   G_0(x,y) a(y) G_a(y,y') a(y')G_0(y',x)\,dy\,dy'
$$
and the optimal choice for $w$ would be $-\lambda^{-1/2} (H_a(x, \cdot) - H_0(x, \cdot))$. Using the positive contribution that arises when completing the square, we will be able to show that if $u_\epsilon$ almost minimizes $S(a+\epsilon V)$, then $w$ almost minimizes the above problem and is therefore almost equal to $-\lambda^{-1/2} (H_a(x, \cdot) - H_0(x, \cdot))$. Proposition \ref{thm-q} provides a quantitative version of these heuristics.

As the above argument shows, the main difficulty will be to show that the term $-15\, U_{x,\lambda}^4$ is negligible to within $o(\lambda^{-1})$. This does not follow from a straightforward bound since $\|\nabla w\|^2$ is only $\mathcal O(\lambda^{-1})$. The orthogonality conditions satisfied by $w$ will play an important role.


\subsection{A second expansion}

In this subsection, we shall prove the following lemma.

\begin{lem}\label{expint}
As $\epsilon\to 0$,
\begin{align} \label{2-S} 
\mathcal S_{a+\epsilon V}[u_\eps] & = S + \lambda^{-1} \left(\frac S3\right)^{-\frac12} \left( 4\pi  \, \phi_a(x) + (4\pi)^{-1} \iint_{\Omega\times\Omega} G_0(x,y) a(y) G_a(y,y') a(y')G_0(y',x)\,dy\,dy' \right) \nonumber  \\
& \quad + \left(\frac S3\right)^{-\frac12} \int_\Omega \left( |\nabla w|^2 + aw^2 + 2 \lambda^{-1/2} a G_0(x,y)w - 15\, U_{x,\lambda}^4 w^2 \right) dy + o(\lambda^{-1})   \,.
\end{align} 
\end{lem}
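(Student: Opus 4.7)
I would retrace the proof of Lemma \ref{expini} using the a priori bounds $\|\nabla w\|=\mathcal O(\lambda^{-1/2})$ and $d^{-1}=\mathcal O(1)$ from Proposition \ref{wbound}, so as to turn every previous $\mathcal O(\lambda^{-1})$-type remainder into $o(\lambda^{-1})$ and to extract from the crude error bounds their missing leading contributions at order $\lambda^{-1}$.

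\textbf{Refining the numerator.} The expansion of $\int|\nabla PU_{x,\lambda}|^2\,dy$ from the proof of Lemma \ref{expini} is already sharp enough and still gives $3(S/3)^{3/2}-4\pi\phi_0(x)\lambda^{-1}+o(\lambda^{-1})$. Genuine refinement is needed only in two places. For the purely quadratic term, the second estimate of Lemma \ref{lem-V} with $b=0$ gives
\[
\int_\Omega a(y)(U_{x,\lambda}-\lambda^{-1/2}H_0(x,\cdot))^2\,dy = \lambda^{-1}\int_\Omega a(y)G_0(x,y)^2\,dy + \mathcal O(\lambda^{-2}\log\lambda),
\]
and the contributions of the remainder $f_{x,\lambda}$ (controlled by \eqref{sup-f}) as well as the whole $\epsilon V$ piece are $o(\lambda^{-1})$. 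For the cross term, the first estimate of Lemma \ref{lem-V} together with Hölder's inequality and the bound $\|w\|_6\lesssim\|\nabla w\|=\mathcal O(\lambda^{-1/2})$ yields
\[
2\int_\Omega(a+\epsilon V)PU_{x,\lambda}\,w\,dy = 2\lambda^{-1/2}\int_\Omega a(y)G_0(x,y)w(y)\,dy + o(\lambda^{-1}),
\]
where the $\epsilon V$ contribution is again absorbed into the remainder thanks to $\epsilon\to 0$.

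\textbf{Refining the denominator and assembling the quotient.} The expansion of $\int(PU_{x,\lambda}+w)^6\,dy$ proceeds exactly as in the proof of Lemma \ref{expini}, but now with $d^{-1}=\mathcal O(1)$ converting every $o((d\lambda)^{-1})$ into $o(\lambda^{-1})$; in particular the orthogonality $\int U_{x,\lambda}^5 w\,dy=0$ kills the leading part of the cross term and the higher-order $\mathcal O(\|\nabla w\|^3)$ piece becomes $\mathcal O(\lambda^{-3/2})=o(\lambda^{-1})$. Combined with the $L^6$-normalization $\int u_\epsilon^6\,dy=(S/3)^{3/2}$ from \eqref{appr-min}, this fixes $\alpha^2$ up to $o(\lambda^{-1})$. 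Plugging the resulting numerator and denominator into the quotient and letting the $\phi_0(x)$-contributions partially cancel between numerator and denominator, I obtain
\begin{align*}
\mathcal S_{a+\epsilon V}[u_\epsilon] &= S + \lambda^{-1}(S/3)^{-1/2}\Bigl[4\pi\phi_0(x) + \int_\Omega a(y)G_0(x,y)^2\,dy\Bigr] \\
&\quad + (S/3)^{-1/2}\int_\Omega \bigl(|\nabla w|^2+aw^2+2\lambda^{-1/2}aG_0(x,\cdot)w-15U_{x,\lambda}^4 w^2\bigr)\,dy + o(\lambda^{-1}).
\end{align*}

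\textbf{Algebraic identity and main obstacle.} To match this with \eqref{2-S} it remains to rewrite the bracket by iterating the resolvent identity \eqref{resolvent}: first, $4\pi(\phi_a(x)-\phi_0(x))=\int G_0(x,z)a(z)G_a(z,x)\,dz$; second, substituting on the right the identity $G_a(z,x)=G_0(z,x)-(4\pi)^{-1}\int G_a(z,w)a(w)G_0(w,x)\,dw$ (which is \eqref{resolvent} rearranged via the symmetry of $G_0$ and $G_a$) produces the desired identity
\[
4\pi\phi_0(x) + \int_\Omega a(y)G_0(x,y)^2\,dy = 4\pi\phi_a(x) + (4\pi)^{-1}\iint_{\Omega\times\Omega} G_0(x,y)a(y)G_a(y,y')a(y')G_0(y',x)\,dy\,dy'.
\]
The principal technical obstacle is the refinement of the cross term $2\int(a+\epsilon V)PU_{x,\lambda}w\,dy$, where a naive Hölder estimate only yields $\mathcal O(\lambda^{-1/2}\|\nabla w\|)=\mathcal O(\lambda^{-1})$, too weak for the claimed $o(\lambda^{-1})$ remainder. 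The improvement rests crucially on the sharp $L^{6/5}$-bound of Lemma \ref{lem-V} combined with the a priori bound $\|\nabla w\|=\mathcal O(\lambda^{-1/2})$ from Proposition \ref{wbound}; the remainder of the argument is careful bookkeeping of the orders.
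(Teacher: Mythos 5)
Your proposal is correct and follows essentially the same route as the paper: redo the expansions from Lemma \ref{expini} keeping the $\int_\Omega a\,(PU_{x,\lambda}^2+2PU_{x,\lambda}w)\,dy$ terms, extract their leading contributions via Lemma \ref{lem-V} and the bounds $\|\nabla w\|=\mathcal O(\lambda^{-1/2})$, $d^{-1}=\mathcal O(1)$ from Proposition \ref{wbound}, and finish with the iterated resolvent identity. (One small phrasing quibble: the $\phi_0(x)$-terms from numerator and denominator actually reinforce rather than cancel, turning $-4\pi\phi_0(x)$ into $+4\pi\phi_0(x)$; your final formula is nonetheless correct.)
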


\begin{proof}
\emph{Expansion of the numerator.}
We claim that
\begin{align} \label{1-num}
\alpha^{-2} \int_\Omega  (|\nabla u_\eps|^2 + a u_\epsilon^2 + \epsilon V u_\epsilon^2)\,dy &=  3^{-1/2} S^{3/2}  - \lambda^{-1} \left( 4\pi \phi_0(x) -  \int_\Omega a G_0(x,y)^2\,dy \right) \nonumber \\
& \quad + \int_\Omega \left(|\nabla w|^2 + a w^2 + 2 \lambda^{-1/2} \, a\, G_0(x,y) w\right)dy + o(\lambda^{-1}) \,. 
\end{align}

Indeed, arguing as in the proof of Lemma \ref{expini} and using the bounds on $d$ and $\|\nabla w\|$ from Proposition \ref{wbound}, we obtain
\begin{align*}
\alpha^{-2} \int_\Omega  (|\nabla u_\eps|^2 + a u_\epsilon^2 + \epsilon V u_\epsilon^2)\,dy &=  3^{-1/2} S^{3/2}  - 4\pi \phi_0(x) \lambda^{-1}  + \int_\Omega a PU_{x,\lambda}^2\,dy \nonumber \\
& \quad + \int_\Omega \left(|\nabla w|^2 + a w^2 + 2 \, a\, PU_{x,\lambda} w\right)dy + o(\lambda^{-1}) \,. 
\end{align*}
Note that here we have kept the term $\int_\Omega a (PU_{x,\lambda}^2 +2 PU_{x,\lambda} w)\,dy$ instead of estimating it. We now treat this contribution more carefully. We expand $PU_{x,\lambda}$ as in \eqref{PU exp}, which leads to
\begin{align*}
\int_\Omega a (PU_{x,\lambda}^2+ 2 PU_{x,\lambda} w)\,dy & = \int_\Omega a \left((U_{x,\lambda} -\lambda^{-1/2} H_0(x,y))^2 + 2(U_{x,\lambda}-\lambda^{-1/2} H_0(x,y))w\right)dy\\
& \quad -2 \int_\Omega a (PU_{x,\lambda}+w) f_{x,\lambda}\,dy - \int_\Omega a f_{x,\lambda}^2\,dy \,.
\end{align*}
By \eqref{sup-f} and \eqref{eq:zm65}, taking into account \eqref{eq-d},
$$
\left| \int_\Omega a \left( 2(PU_{x,\lambda}+w) f_{x,\lambda} + f_{x,\lambda}^2 \right)dy \right| = \mathcal O \left(\|a\|_\infty (\|PU_{x,\lambda}\|_{6/5}\|f\|_6 + \|w\|_6 \|f_{x,\lambda}\|_{6/5} + \|f_{x,\lambda}\|^2) \right) = \mathcal O(\lambda^{-3}) \,.
$$
On the other hand, by Lemma \ref{lem-V},
\begin{align*}
& \int_\Omega a \left((U_{x,\lambda} -\lambda^{-1/2} H_0(x,y))^2 +2 (U_{x,\lambda}-\lambda^{-1/2} H_0(x,y))w\right)dy \\
& \quad = \int_\Omega a \left( \lambda^{-1} G_0(x,y)^2 + 2\lambda^{-1/2} G_0(x,y)w \right)dy + \mathcal O(\lambda^{-2}\ln\lambda) \,.
\end{align*}
This proves \eqref{1-num}.

\emph{Expansion of the denominator.}
Combining the bound from the proof of Lemma \ref{expini} with the bounds on $d$ and $\|\nabla w\|$ from Proposition \ref{wbound}, we obtain
\begin{align}\label{1-den}
\alpha^2 \left(\int_\Omega u_\epsilon^6 \,dy\right)^{-1/3} & = (S/3)^{-1/2} + (S/3)^{-2} \frac{8\pi}{3} \phi_0(x)\lambda^{-1}
 - 45  S^{-2} \int_\Omega U_{x,\lambda}^4 w^2\,dy + o(\lambda^{-1}) \,.
\end{align}

\emph{Expansion of the quotient.}
Multiplying \eqref{1-num} and \eqref{1-den} gives 
\begin{align*}
\mathcal S_{a+\epsilon V}[u_\eps] & = S + \lambda^{-1} (S/3)^{-1/2} 4\pi  \phi_0(x) + \lambda^{-1} (S/3)^{-1/2} \int_\Omega a G_0(x,y)^2\,dy \\
& \quad + (S/3)^{-1/2} \int_\Omega \left( |\nabla w|^2 + aw^2 + 2\lambda^{-1/2} a G_0(x,y)w - 15\, U_{x,\lambda}^4 w^2 \right) dy  + o(\lambda^{-1}) \,.
\end{align*}
The resolvent identity together with the symmetry $G_0(x,y)=G_0(y,x)$ implies 
\begin{align*}
& \int_\Omega a(y) G_0(x,y)^2\,dy - (4\pi)^{-1} \iint_{\Omega\times\Omega}  G_0(a,y) a(y) G_a(y,y') a(y')G_0(y',x)\,dy\,dy' \\
&\qquad = \int_\Omega G_0(x,y) a(y) G_a(y,x)\,dy = 4\pi \left( \phi_a(x) - \phi_0(x) \right).
\end{align*}
This completes the proof of the lemma.
\end{proof}


\subsection{Regularization and coercivity}

In this subsection we will show that the coercivity bound from Lemma \ref{coercivity} remains essentially true after regularization. A convenient regularization procedure for us is a spectral cut-off. Namely, we denote by $\mathds 1(-\Delta + a \leq \mu\, )$ the spectral projection for the interval $(-\infty,\mu]$ of the self-adjoint operator $-\Delta +a$ in $L^2(\Omega)$ with Dirichlet boundary condition. The parameter $\mu$ here will be later chosen large depending on $\epsilon$.

\begin{lem} \label{lem-regular}
Let $v\in H^1_0(\Omega)$. Then for any $\mu\geq 1$,
\begin{equation}
\label{w-infty}
\| \mathds 1(-\Delta + a \leq \mu\, ) v \|_\infty\  \lesssim  \ \mu^{1/4}\,  \|\nabla v \| \, . 
\end{equation}
\end{lem}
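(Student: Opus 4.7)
The plan is to use the spectral decomposition of the self-adjoint operator $H := -\Delta + a$ on $L^2(\Omega)$ with Dirichlet boundary conditions. Coercivity ensures that $H$ has eigenvalues $0 < \lambda_1 \leq \lambda_2 \leq \cdots$ with an $L^2$-orthonormal basis $(\phi_k)$ of eigenfunctions. Moreover, since $a$ is bounded and Poincar\'e's inequality holds on $\Omega$, the form norm $\|H^{1/2} v\|^2 = \int_\Omega (|\nabla v|^2 + a v^2)\,dy$ is comparable to $\|\nabla v\|^2$; in particular, $\|H^{1/2} v\|\lesssim \|\nabla v\|$ for $v \in H^1_0(\Omega)$.

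First, expanding $v = \sum_k c_k \phi_k$ with $c_k = \langle v, \phi_k\rangle$ and splitting $c_k = \lambda_k^{-1/2}\bigl(\lambda_k^{1/2}c_k\bigr)$ before applying Cauchy--Schwarz in the index $k$, I would estimate pointwise
\begin{equation*}
\bigl|\bigl(\mathds 1(H\leq \mu)v\bigr)(x)\bigr|^2 = \Bigl|\sum_{\lambda_k\leq \mu} c_k\phi_k(x)\Bigr|^2 \leq \Bigl(\sum_{\lambda_k\leq\mu}\lambda_k|c_k|^2\Bigr)\Bigl(\sum_{\lambda_k\leq\mu}\lambda_k^{-1}|\phi_k(x)|^2\Bigr) \lesssim \|\nabla v\|^2\, K(x,\mu),
\end{equation*}
where $K(x,\mu):= \sum_{\lambda_k\leq\mu}\lambda_k^{-1}|\phi_k(x)|^2$. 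The lemma then reduces to the uniform bound $K(x,\mu)\lesssim \mu^{1/2}$, for $x\in\Omega$ and $\mu\geq 1$.

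To establish this, I would invoke the pointwise diagonal heat kernel estimate
\begin{equation*}
p_H(s,x,x) \leq e^{s\|a_-\|_\infty}(4\pi s)^{-3/2},\qquad s>0,
\end{equation*}
obtained from the Feynman--Kac formula together with the domination of the Dirichlet heat kernel on $\Omega$ by the free heat kernel on $\R^3$. In particular, $p_H(s,x,x)\lesssim s^{-3/2}$ for $s\in(0,1]$, while $p_H(s,x,x)\lesssim e^{-s\lambda_1/2}$ for $s\geq 1$ thanks to the spectral gap. Integrating these bounds,
\begin{equation*}
e^{-1}K(x,\mu)\leq \sum_k e^{-\lambda_k/\mu}\lambda_k^{-1}|\phi_k(x)|^2 = \int_{1/\mu}^\infty p_H(s,x,x)\,ds \lesssim \mu^{1/2} \qquad (\mu\geq 1),
\end{equation*}
which combined with the previous step yields the claim.

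The main technical input is the uniform heat kernel bound; once this is granted, the argument is purely spectral theory and Cauchy--Schwarz. A slightly more elementary alternative would be to first prove the Weyl-type diagonal estimate $\sum_{\lambda_k\leq\mu}|\phi_k(x)|^2\lesssim\mu^{3/2}$ (itself a consequence of the heat kernel bound at $t=1/\mu$) and then recover $K(x,\mu)\lesssim\mu^{1/2}$ by Abel summation against the weight $\lambda^{-1}$; both routes produce the desired $\mu^{1/4}$ scaling.
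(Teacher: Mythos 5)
Your proof is correct and follows essentially the same approach as the paper: the same Cauchy--Schwarz split $c_k = \lambda_k^{-1/2}(\lambda_k^{1/2}c_k)$, the same reduction to a diagonal heat kernel bound $p_H(s,x,x)\lesssim s^{-3/2}e^{s\|a_-\|_\infty}$, and the same reliance on coercivity to control the large-$s$ regime. The only (cosmetic) difference is in the final step: the paper first proves the counting-function bound $\sum_{E_n\le s}|\Phi_n(x)|^2 \lesssim (s+\|a_-\|_\infty)^{3/2}$ and then feeds it into $E^{-1}=\int_E^\infty s^{-2}\,ds$, whereas you directly majorize the truncated weighted sum by $e\int_{1/\mu}^\infty p_H(s,x,x)\,ds$ using $e^{-\lambda_k/\mu}\ge e^{-1}$ for $\lambda_k\le\mu$ --- which is precisely the Abel-summation alternative you mention at the end.
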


\begin{proof}
Let $a_- = \max\{0,-a\}$. By the maximum principle or the Trotter product formula, we have
\begin{equation} \label{hk-upperb}
0\leq e^{-t(-\Delta+a)}(x,x) \, \leq \, (4\pi t)^{-3/2}\, e^{t\|a_-\|_\infty}
\qquad \text{for all}\ t>0 \,;
\end{equation}
see, e.g., \cite[Thm.~2.4.4]{da} for related estimates.

We denote by $E_n$ the eigenvalues of $-\Delta+a$ in $L^2(\Omega)$ and by $\Phi_n$ the corresponding $L^2$-normalized eigenfunctions. We bound for any $x\in\Omega$
\begin{align*}
\left| \left(\mathds 1(-\Delta + a \leq \mu) v\right)(x) \right|
& = \left| \sum_{E_n\leq\mu} (\Phi_n,v) \Phi_n(x) \right| \\
& \leq \Big( \sum_{E_n\leq\mu} E_n |(\Phi_n,v)|^2 \Big)^{1/2} \Big( \sum_{E_n\leq\mu} E_n^{-1} |\Phi_n(x)|^2 \Big)^{1/2}.
\end{align*}
We clearly have
$$
\sum_{E_n\leq\mu} E_n |(\Phi_n,v)|^2 \leq \sum_n E_n |(\Phi_n,v)|^2 = (v,(-\Delta+a)v) \lesssim \|\nabla v\|^2 \,.
$$
The heat kernel bound \eqref{hk-upperb} implies that for any $s>0$ and $t>0$
$$
\sum_{E_n\leq s} |\Phi_n(x)|^2 \leq e^{t s} \sum_{E_n\leq s} e^{-t E_n} |\Phi_n(x)|^2\,  \leq \, e^{t (s+\|a_-\|_\infty)} \, (4\pi t)^{-3/2} \,,
$$
and choosing $t=(3/2) (s+\|a_-\|_\infty)^{-1}$ we obtain for any $s>0$,
$$
\sum_{E_n\leq s} |\Phi_n(x)|^2 \ \leq \ \left( \frac{e}{6\pi} \right)^{3/2} (s+\|a_-\|_\infty)^{3/2} \,.
$$
Thus, writing $E^{-1} = \int_E^\infty s^{-2} \,ds$, we get 
\begin{align*}
\sum_{E_n\leq\mu} E_n^{-1} |\Phi_n(x)|^2 & = \int_0^\infty \sum_{E_n\leq\mu} |\Phi_n(x)|^2 \mathds 1(E_n\leq s) \,\frac{d s}{s^2}
= \int_{E_1}^\infty \sum_{E_n\leq\min\{\mu, s\}} |\Phi_n(x)|^2 \, \frac{d s}{s^2} \\
& \leq \left( \frac{e}{6\pi} \right)^{3/2} \int_{E_1}^\infty \min\big\{(\mu+\|a_-\|_\infty)^{3/2}, (s+\|a_-\|_\infty)^{3/2}\big\} \, \frac{ds}{s^2} \,.
\end{align*}
The integral is easily seen to be bounded by a universal constant times
$$
\mu^{1/2} + E_1^{-1} \|a_-\|_\infty^{3/2} \,.
$$
This proves the claimed bound.
\end{proof}

\begin{lem} \label{lem-aux}
There are constants $T_*<\infty$, $\rho>0$ and $C<\infty$ such that for all $x\in\Omega$, $\lambda>0$ with $ d\lambda\geq T_*$, and all $v\in T_{x, \lambda}^\bot$ and all $\mu\geq 1$ the function
$$
v_> := \mathds 1(-\Delta + a > \mu) v 
$$
satisfies
\begin{equation}\label{eps-ineq}
\int_\Omega \left( |\nabla v_>|^2 + a v_>^2 -15\, U_{x,\lambda}^4 \, v_>^2 \right) dy\, \geq \, \rho \int_\Omega  |\nabla v_>|^2 \, dy - C \mu^{1/2} \lambda^{-1} \|\nabla v\|^2 \,.
\end{equation}
\end{lem}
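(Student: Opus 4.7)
My plan is to apply the unperturbed coercivity of Lemma~\ref{coercivity} to the $H^1_0$-orthogonal projection of $v_>$ onto $T_{x,\lambda}^\bot$, and to control the component of $v_>$ along $T_{x,\lambda}$ as a small error via Lemma~\ref{lem-regular}. Concretely, I decompose $v_> = v_>^\perp + \phi$ with $\phi := \Pi_{x,\lambda} v_>$ and $v_>^\perp := v_> - \phi \in T_{x,\lambda}^\bot$. Since $v \in T_{x,\lambda}^\bot$ by hypothesis, $\Pi_{x,\lambda}(v_< + v_>) = 0$ gives $\phi = -\Pi_{x,\lambda} v_<$. The $H^1_0$-orthogonality of this decomposition yields the Pythagorean identity $\|\nabla v_>\|^2 = \|\nabla v_>^\perp\|^2 + \|\nabla \phi\|^2$.

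The key estimate I need is $\|\nabla \phi\|^2 \lesssim \mu^{1/2}\lambda^{-1} \|\nabla v\|^2$. To prove it, pick an $H^1_0$-orthonormal basis $\tilde e_1, \ldots, \tilde e_5$ of $T_{x,\lambda}$, so that $\|\nabla \phi\|^2 = \sum_k |\langle \nabla v_<, \nabla \tilde e_k\rangle|^2$. Integrating by parts (legitimate since $v_< \in H^1_0(\Omega)$ is a finite linear combination of Dirichlet eigenfunctions and $\Delta \tilde e_k$ is a smooth function in $L^1$) gives $\langle \nabla v_<, \nabla \tilde e_k\rangle = -\int_\Omega v_< \, \Delta \tilde e_k \, dy$, so H\"older and Lemma~\ref{lem-regular} yield $|\langle \nabla v_<, \nabla \tilde e_k\rangle| \le \|v_<\|_\infty \, \|\Delta \tilde e_k\|_1 \lesssim \mu^{1/4}\|\nabla v\|\, \|\Delta \tilde e_k\|_1$. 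The bound $\|\Delta \tilde e_k\|_1 \lesssim \lambda^{-1/2}$ uniformly in $k$ follows from the rescaling $U_{x,\lambda}(y) = \lambda^{1/2} U_{0,1}(\lambda(y-x))$ applied to $-\Delta PU_{x,\lambda} = 3\, U_{x,\lambda}^5$ and to its $\partial_\lambda$ and $\partial_{x_i}$ derivatives, combined with the near-diagonality of the Gram matrix of $\{PU_{x,\lambda}, \partial_\lambda PU_{x,\lambda}, \partial_{x_i} PU_{x,\lambda}\}$ in the $\int \nabla\cdot\nabla$ inner product, whose diagonal entries scale as $1$, $\lambda^{-2}$ and $\lambda^2$ respectively and whose off-diagonal entries are of lower order by parity in $y - x$.

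With $\phi$ controlled, set $Q(u) := \int_\Omega (|\nabla u|^2 + au^2 - 15\, U_{x,\lambda}^4 u^2)\, dy$ and let $B$ be the associated bilinear form. Expanding $Q(v_>) = Q(v_>^\perp) + 2B(v_>^\perp, \phi) + Q(\phi)$, Lemma~\ref{coercivity} applied to $v_>^\perp \in T_{x,\lambda}^\bot$ gives $Q(v_>^\perp) \geq \rho\|\nabla v_>^\perp\|^2$. The remaining two terms are bounded by Cauchy--Schwarz together with $\|\cdot\|_2 \lesssim \|\nabla\cdot\|$ and $\|\cdot\|_6 \lesssim \|\nabla\cdot\|$ (handling the zero-order contributions from $a$ and from $U_{x,\lambda}^4$, using $\|U_{x,\lambda}\|_6^4 = O(1)$), yielding $|B(v_>^\perp, \phi)| \lesssim \|\nabla v_>^\perp\|\,\|\nabla\phi\|$ and $|Q(\phi)| \lesssim \|\nabla\phi\|^2$. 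An AM--GM step absorbs the cross term into $\tfrac{\rho}{2}\|\nabla v_>^\perp\|^2$ plus a multiple of $\|\nabla\phi\|^2$, and the Pythagorean identity replaces $\|\nabla v_>^\perp\|^2$ by $\|\nabla v_>\|^2 - \|\nabla\phi\|^2$, producing the claim after relabeling $\rho/2 \mapsto \rho$. I expect the main obstacle to be the uniform-in-$k$ bound $\|\Delta \tilde e_k\|_1 \lesssim \lambda^{-1/2}$, which hinges on the almost-orthogonality of the bubble basis and the correct relative scalings of its members; once that is in hand, the coercivity-plus-perturbation machinery is routine.
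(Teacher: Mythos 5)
Your proposal is correct and follows essentially the same route as the paper's proof: you decompose $v_> = v_>^\perp + \phi$ with $\phi = \Pi_{x,\lambda}v_> = -\Pi_{x,\lambda}v_<$, bound $\|\nabla\phi\|$ by pairing $v_<$ against $\Delta$ of a near-orthonormal basis of $T_{x,\lambda}$ (using Lemma~\ref{lem-regular} and $\|\Delta\tilde e_k\|_1\lesssim\lambda^{-1/2}$), then apply Lemma~\ref{coercivity} to $v_>^\perp$ and absorb the cross and remainder terms. The paper organizes the absorption slightly differently, introducing an explicit parameter $\delta$ to peel off part of the $|a|+15U_{x,\lambda}^4$ weight before invoking coercivity, while you use Cauchy--Schwarz plus AM--GM against $\rho/2$; these are equivalent. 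One small point of rigor: your appeal to ``parity in $y-x$'' for the near-diagonality of the Gram matrix is a heuristic — the precise $\mathcal O(\lambda^{-1})$ bounds on the off-diagonal entries, which the paper cites from Rey, are needed to justify the Neumann-series inversion that lets you transfer the $\lambda^{-1/2}$ bound from the rescaled $\tilde\phi_j$'s to the orthonormalized $\tilde e_k$'s; you should cite those rather than rely on symmetry alone.
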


\begin{proof}
\emph{Step 1.} We construct an orthonormal basis in $T_{x,\lambda} = {\rm Span} \{\phi_1,\dots, \phi_5\}$, where
$$
\phi_1 = PU_{x,\lambda}, \quad \phi_2 = \partial_\lambda PU_{x,\lambda}, \quad \phi_{j} = \partial_{x_{j-2}} PU_{x,\lambda}, \ \ \ j=3,4,5 \,.
$$
From \cite[Appendix B]{rey2} we know that, as $\lambda\to\infty$, 
\begin{equation} \label{nabla-norm}
\|\nabla \phi_1\| \ \sim \ 1, \quad \|\nabla \phi_2\| \ \sim\ \lambda^{-1}, \quad \|\nabla \phi_j\| \ \sim \ \lambda, \quad j=3,4,5 \,,
\end{equation}
uniformly in $x$ with $\lambda d\geq T_*$, where $T_*$ is any fixed constant. Here $\sim$ means that the quotient of both quantities is bounded from above and away from zero. Let 
\begin{equation} \label{psi-j} 
\tilde\phi_j := \frac{\phi_j}{\|\nabla \phi_j\|} \,, \qquad j=1,\dots ,5 \,,
\end{equation} 
and
$$
G_{j,k}:=\int_\Omega \nabla\tilde\phi_j \cdot \nabla\tilde\phi_k \,dy \,, \qquad j,k=1,\ldots, 5 \,.
$$
By \cite[Appendix B]{rey2} and \eqref{nabla-norm},
\begin{equation}
\label{eq:overlap}
G_{j,k}:= \mathcal{O}(\lambda^{-1}) \quad \text{for all}\ j\neq k \ 
\qquad\text{and}\qquad
G_{j,j} = 1 \quad \text{for all}\ j \,.
\end{equation}
Hence, if $\lambda$ is large enough, which follows from $d \lambda\geq T_*$ with sufficiently large $T_*$ since $\Omega$ is bounded, then $G$ is invertible and
\begin{equation}
\label{eq:gramschmidt}
(G^{-1/2})_{j,k} = \delta_{j,k} + \mathcal O(\lambda^{-1}) \,.
\end{equation}
Hence, by the Gram--Schmidt procedure,
\begin{equation}\label{eq:gramschmidt2}
\psi_j := \sum_k (G^{-1/2})_{j,k}\,  \tilde\phi_k
\qquad j = 1,\ldots,5 \,,
\end{equation}
is an $H^1_0(\Omega)$-orthonormal basis of $T_{x,\lambda}$.

\medskip

\emph{Step 2.} We decompose
\begin{equation}
\label{eq:decomposition u}
v_> = v_\parallel + v_\bot
\qquad\text{with}\ \  v_\parallel\in T_{x,\lambda}\ \ \text{and}\  \ \ 
 v_\bot \in T_{x,\lambda}^\bot
\end{equation}
and claim that
\begin{equation}
\label{eq:betacoeff0}
\|\nabla v_\parallel \| = \mathcal O( \lambda^{-1/2} \mu^{1/4}\,  \|\nabla v\|) \,.
\end{equation}

Since the $\psi_j$ are an orthonormal basis of $T_{x,\lambda}$, we have
$$
v_\parallel = \sum_{j=1}^5 m_j \psi_j
\qquad\text{with}\qquad
m_j := \int_\Omega \nabla\psi_j\cdot\nabla v_>\,dy \,.
$$
Since
$$
\int_\Omega |\nabla v_\parallel|^2\,dy = \sum_j m_j^2 \,,
$$
the claim \eqref{eq:betacoeff0} follows from
\begin{equation}
\label{eq:betacoeff}
m_j = \mathcal O( \lambda^{-1/2} \mu^{1/4}\,  \|\nabla v\|)
\qquad\text{for all}\ j=1,\ldots,5 \,.
\end{equation}

In order to prove the latter, we introduce
$$
\ell_j := \int_\Omega \nabla \tilde\phi_j \cdot \nabla v_> \ dy \, ,
$$
so that, by \eqref{eq:gramschmidt2},
$$
m_j = \sum_k (G^{-1/2})_{j,k}\, l_k \,.
$$
Therefore, in view of \eqref{eq:gramschmidt}, the claim \eqref{eq:betacoeff} follows from
\begin{equation}
\label{eq:alphacoeff}
\ell_j = \mathcal O( \lambda^{-1/2} \mu^{1/4} \|\nabla v\|)
\qquad\text{for all}\ j=1,\ldots,5 \,.
\end{equation}
To prove \eqref{eq:alphacoeff}, we use the fact that $v \in T_{x,\lambda}^\perp$ to find
$$
\ell_j = - \int_\Omega \nabla\tilde\phi_j\cdot\nabla v_< \,dy = \int_\Omega v_<\,  \Delta\tilde\phi_j \,dy \,.
$$
Thus,
$$
|\ell_j| \leq \|v_<\|_\infty\,  \|\Delta\tilde\phi_j\|_1 \,.
$$
According to \eqref{w-infty} we have $ \|v_<\|_\infty \lesssim \mu^{1/4} \|\nabla v\|$. Thus, in order to complete the proof of \eqref{eq:alphacoeff} we need to show that $\|\Delta\tilde\phi_j\|_1=\mathcal O(\lambda^{-1/2})$ for $j=1,\ldots,5$. We have 
\begin{align}\label{eq:laplzeromodes1}
-\Delta\tilde\phi_1 & = \|\nabla\phi_1\|^{-1} 3\, U_{x,\lambda}^5 \,,\qquad\qquad -\Delta\tilde\phi_2 = \|\nabla\phi_2\|^{-1} 15\, U_{x,\lambda}^4\partial_\lambda U_{x,\lambda} \,, \notag \\
-\Delta\tilde\phi_j & = \|\nabla\phi_j\|^{-1} 15\, U_{x,\lambda}^4\partial_j U_{x,\lambda} \qquad\text{for}\ j=3,4,5 \,.
\end{align}
Thus, the claimed bound on $\|\Delta\tilde\phi_j\|_1$ follows from $\eqref{nabla-norm}$ and straightforward bounds on $\|U_{x,\lambda}\|_5$, $\|\partial_\lambda U_{x,\lambda}\|_5$ and $\|\partial_j U_{x,\lambda}\|_5$. This completes the proof of \eqref{eq:alphacoeff} and therefore of \eqref{eq:betacoeff0}.

\medskip

\emph{Step 3.} By the orthogonal decomposition \eqref{eq:decomposition u} we have
$$
\int_\Omega |\nabla v_>|^2\,dy = \int_\Omega |\nabla v_\parallel|^2 \,dy + \int_\Omega |\nabla v_\bot|^2 \,dy \,.
$$
Moreover, we bound, with a parameter $\delta>0$ to be determined, 
$$
\int_\Omega U_{x,\lambda}^4\, v_>^2 \,dy \ \leq \  (1+\delta^{-1}) \int_\Omega U_{x,\lambda}^4\, v_\parallel^2\,dy + (1+\delta) \int_\Omega U_{x,\lambda}^4\, v_\bot^2\,dy
$$
and
$$
\int_\Omega a\, v_>^2 \,dy \ \geq \  -(1+\delta^{-1}) \int_\Omega |a|\, v_\parallel^2\,dy + \int_\Omega a\, v_\bot^2\,dy - \delta \int_\Omega |a|\, v_\bot^2\,dy \,.
$$
Thus,
\begin{align*}
\int_\Omega \left(|\nabla v_>|^2 + a v_>^2 - 15\, U_{x,\lambda} v_>^2 \right)dy
& \geq \int_\Omega \left(|\nabla v_\bot|^2 + a v_\bot^2 - 15\, U_{x,\lambda} v_\bot^2 \right)dy
- \delta \int_\Omega (|a|+15\, U_{x,\lambda}^4) v_\bot^2\,dy \\
& \quad + \int_\Omega |\nabla v_\parallel|^2\,dy - (1+\delta^{-1}) \int_\Omega (|a|+15\, U_{x,\lambda}^4) v_\parallel^2\,dy \,.
\end{align*}
Clearly,
\begin{equation} \label{hs-ineq}
 \int_\Omega (|a|+15\,U_{x,\lambda}^4)\, z^2 \,dy \leq \left( \|a\|_{3/2} + 15 \|U_{x,\lambda}\|_6^4 \right) \|z\|_6^2 \lesssim\ \|\nabla z\|^2 \qquad \forall\, z\in H_0^1(\Omega).
\end{equation}
Since $v_\bot\in T_{x,\lambda}^\bot$, Lemma \ref{coercivity} and \eqref{hs-ineq} imply that, after increasing $T_*$ if necessary, there are $\delta>0$ and $c>0$ such that
$$
\int_\Omega \left(|\nabla v_\bot|^2 + a v_\bot^2 - 15\, U_{x,\lambda} v_\bot^2 \right)dy
- \delta \int_\Omega (|a|+15\, U_{x,\lambda}^4) v_\bot^2\,dy \geq c \int_\Omega |\nabla  v_\bot|^2\,dy \,.
$$
On the other hand, by \eqref{hs-ineq} and \eqref{eq:betacoeff0},
\begin{align*}
\int_\Omega (|a|+15\, U_{x,\lambda}^4) v_\parallel^2\,dy & \lesssim \int_\Omega |\nabla v_\parallel|^2\,dy = \mathcal O( \lambda^{-1} \mu^{1/2} \|\nabla v\|^2) \,.
\end{align*}
This completes the proof of Lemma \ref{lem-aux}.
\end{proof}


\subsection{Completing the square}

The following lemma gives a lower bound on the term in \eqref{2-S} which involves $w$. As explained above, this is the crucial step in the proof of Proposition \ref{thm-q}.  

\begin{lem} \label{prop-1}
For some constant $c>0$,
\begin{align}
\label{w-lowerb} 
& \int_\Omega \left( |\nabla w|^2 + aw^2 + 2 \lambda^{-1/2} a G_0(x,y)w - 15 \, U_{x,\lambda}^4 w^2 \right) dy \nonumber \\
& \qquad \geq -\lambda^{-1} (4\pi)^{-1} \iint_{\Omega\times\Omega}  G_0(x,y) a(y) G_a(y,y') a(y')G_0(y',x)\,dy\,dy' \nonumber \\
& \qquad \quad + c\,  \Big\| (-\Delta+a)^{1/2} w + (-\Delta+a)^{-1/2} \lambda^{-1/2} a G_0(x,\cdot) \Big\|^2 + \mathcal O(\lambda^{-3/2}) \,. 
\end{align}
\end{lem}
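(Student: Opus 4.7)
My plan is to complete the square in the quadratic-linear form in $w$ and then bound the resulting perturbation $15\int U_{x,\lambda}^4 w^2$ by exploiting the orthogonality of $w$ to $T_{x,\lambda}$.

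\emph{Step 1 (completing the square).} Write $F := -\Delta + a$. A direct consequence of \eqref{resolvent} is that $H_a(x,\cdot) - H_0(x,\cdot)$ solves $F\psi = a G_0(x,\cdot)$ in $\Omega$ with zero boundary values, so $F^{-1}(a G_0(x,\cdot)) = H_a(x,\cdot) - H_0(x,\cdot)$. Setting $\rho_0 := -\lambda^{-1/2}(H_a(x,\cdot) - H_0(x,\cdot))$ and $q := w - \rho_0$, a direct expansion gives
\begin{align*}
\int_\Omega \bigl(|\nabla w|^2 + aw^2 + 2\lambda^{-1/2} a G_0(x,y)w\bigr)\,dy
& = \|F^{1/2}q\|^2 \\
& \quad - \lambda^{-1}(4\pi)^{-1}\iint_{\Omega\times\Omega} G_0(x,y)a(y)G_a(y,y')a(y')G_0(y',x)\,dy\,dy',
\end{align*}
where the double integral equals $\lambda^{-1}\langle a G_0(x,\cdot), F^{-1}(a G_0(x,\cdot))\rangle$ evaluated via the kernel $(4\pi)^{-1}G_a$ of $F^{-1}$. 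The lemma then reduces to showing that, for some $c > 0$,
$$
15\int_\Omega U_{x,\lambda}^4 w^2\,dy \leq (1-c)\|F^{1/2}q\|^2 + \mathcal{O}(\lambda^{-3/2}).
$$

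\emph{Step 2 (decomposing $q$).} I would decompose $q = q_\perp + q_\parallel$ orthogonally in $H^1_0(\Omega)$, so that $q_\perp \in T_{x,\lambda}^\perp$ and $q_\parallel = -\Pi_{x,\lambda}\rho_0$ (using $w \in T_{x,\lambda}^\perp$). The key claim is the ``almost-orthogonality'' estimate
$$
\|\nabla q_\parallel\| = \mathcal{O}(\lambda^{-1}).
$$
To prove it, I would evaluate $\langle \nabla\tilde\phi_k, \nabla\rho_0\rangle = -\int(\Delta\tilde\phi_k)\rho_0\,dy$ for each normalized mode $\tilde\phi_k$ from \eqref{psi-j}, using the explicit Laplacians \eqref{eq:laplzeromodes1}. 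For $k=1$, Lemma \ref{lem-uh} gives $3\lambda^{-1/2}|\int U_{x,\lambda}^5(H_a-H_0)\,dy| = \mathcal{O}(\lambda^{-1})$; for the translation modes $k=3,4,5$ the leading term vanishes by radial symmetry of $U_{x,\lambda}$ (cf.\ \eqref{mean-0}), so a Taylor expansion of $\rho_0$ at $x$ supplies extra decay; the dilation mode $k=2$ benefits from the smallness $\|\nabla\phi_2\| \sim \lambda^{-1}$ of \eqref{nabla-norm}. Combined with $\|\rho_0\|_\infty = \mathcal{O}(\lambda^{-1/2})$ from \eqref{sup-h-2} and $\int U_{x,\lambda}^4\,dy = \mathcal{O}(\lambda^{-1})$, this yields $\int U_{x,\lambda}^4\rho_0^2\,dy = \mathcal{O}(\lambda^{-2})$ and $\int U_{x,\lambda}^4 q_\parallel^2\,dy \lesssim \|\nabla q_\parallel\|^2 = \mathcal{O}(\lambda^{-2})$.

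\emph{Step 3 (coercivity and conclusion).} Applying Lemma \ref{coercivity} to $q_\perp \in T_{x,\lambda}^\perp$ and using the equivalence of $\|\nabla\cdot\|$ and $\|F^{1/2}\cdot\|$ on $H^1_0(\Omega)$ from coercivity of $F$, I obtain $15\int U_{x,\lambda}^4 q_\perp^2\,dy \leq (1-c_0)\|F^{1/2}q_\perp\|^2$ for some $c_0 > 0$. Splitting $w = q_\perp + \Pi_{x,\lambda}^\perp\rho_0$ via Cauchy--Schwarz with parameter $\delta = \lambda^{-1/2}$ absorbs the $\Pi_{x,\lambda}^\perp\rho_0$ contribution into $\mathcal{O}(\lambda^{-3/2})$ via Step 2, giving $15\int U_{x,\lambda}^4 w^2\,dy \leq (1-c_0/2)\|F^{1/2}q_\perp\|^2 + \mathcal{O}(\lambda^{-3/2})$. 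Finally $\|F^{1/2}q_\perp\|^2 = \|F^{1/2}q\|^2 + \mathcal{O}(\lambda^{-3/2})$, because the cross term $\langle Fq_\perp, q_\parallel\rangle = \langle a q_\perp, q_\parallel\rangle$ is $\mathcal{O}(\|q_\perp\|_2 \|q_\parallel\|_2) = \mathcal{O}(\lambda^{-3/2})$ by \eqref{esp-w} and Step 2. The main obstacle will be the almost-orthogonality bound $\|\nabla q_\parallel\| = \mathcal{O}(\lambda^{-1})$: the naive estimate $\|\nabla\rho_0\| = \mathcal{O}(\lambda^{-1/2})$ is off by a factor of $\lambda^{1/2}$, and the sharp bound requires mode-by-mode analysis using the radial-symmetry cancellation for the translation modes and the smallness of $\|\nabla\phi_2\|$ for the dilation mode.
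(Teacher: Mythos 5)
Your proof is correct, but it follows a genuinely different route from the paper's. The paper decomposes $w = w_> + w_<$ via the \emph{spectral} cut-off $\mathds 1(-\Delta+a\leq\mu)$ at energy $\mu=\lambda$, completes the square only in the low-frequency part $I_<$ (which carries the linear term but not $-15U^4$), and handles the high-frequency part $I_>$ with a \emph{regularized} coercivity bound (Lemma \ref{lem-aux}); this machinery needs the heat-kernel pointwise bound $\|w_<\|_\infty\lesssim\mu^{1/4}\|\nabla w\|$ of Lemma \ref{lem-regular} to control the error terms $R_<,R_>$. You instead complete the square \emph{exactly}, obtaining $\|F^{1/2}q\|^2$ with $q=w-\rho_0$, and then decompose $q=q_\perp+q_\parallel$ by orthogonal projection onto $T_{x,\lambda}$ in $H^1_0$, applying the unmodified coercivity Lemma \ref{coercivity} to $q_\perp$. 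Your key input is the almost-orthogonality estimate $\|\nabla\Pi_{x,\lambda}\rho_0\|=\mathcal{O}(\lambda^{-1})$, which is in effect Step 1 of the paper's Lemma \ref{lem-bg}, proved later there for the refined decomposition; so you are invoking a computation the authors postpone. The two proofs are of comparable length: the paper's buys independence from the $T_{x,\lambda}$-projection estimates at the price of the heat-kernel/spectral machinery, while yours avoids that machinery at the price of the mode-by-mode analysis (note, incidentally, that inside the proof of Lemma \ref{lem-aux} the paper also decomposes $w_>$ orthogonally relative to $T_{x,\lambda}$, so the ideas are closely related). Two minor inaccuracies in your sketch do not affect correctness: the normalization by $\|\nabla\phi_2\|\sim\lambda^{-1}$ \emph{enlarges} the $k=2$ coefficient rather than helping — the true reason it is $\mathcal{O}(\lambda^{-1})$ is that $\|U_{x,\lambda}^4\partial_\lambda U_{x,\lambda}\|_1\sim\lambda^{-3/2}$ carries an extra $\lambda^{-1}$ relative to $\|U_{x,\lambda}^5\|_1$, cancelling the $\lambda$ from normalization; and for $k=3,4,5$ the radial cancellation is not needed, since the brute-force bound $\|\nabla\phi_j\|^{-1}\|\rho_0\|_\infty\|U_{x,\lambda}^4\partial_{x_j}U_{x,\lambda}\|_1 = \mathcal{O}(\lambda^{-1}\cdot\lambda^{-1/2}\cdot\lambda^{1/2})=\mathcal{O}(\lambda^{-1})$ already suffices (the cancellation only improves this to $\mathcal{O}(\lambda^{-2})$, which you do not require).
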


\begin{proof}
For a parameter $\mu\geq 1$ to be specified later we decompose $w=w_>+w_<$ with
$$
w_> = \mathds 1(-\Delta + a > \mu) w \,,
\qquad
w_< = \mathds 1(-\Delta + a \leq \mu) w \,.
$$
Then 
\begin{equation} \label{w-ort}
\int_\Omega \left( |\nabla w|^2 + aw^2\right)dy = \int_\Omega \left( |\nabla w_>|^2 + aw_>^2\right)dy + \int_\Omega \left( |\nabla w_<|^2 + aw_<^2\right)dy
\end{equation}
and therefore, for any $\delta>0$,
\begin{align} \label{id-w}
& \int_\Omega \left( |\nabla w|^2 + aw^2 + 2\lambda^{-1/2} a G_0(x,y)w - 15 \,U_{x,\lambda}^4 w^2 \right) dy\,  \geq\, I_< + I_> + R_<(\delta) + R_>(\delta) \,,
\end{align}
where
\begin{align*}
I_< & := \int_\Omega \left( |\nabla w_<|^2 + aw_<^2 + 2 \lambda^{-1/2} a G_0(x,y) w_< \right) dy \,, \\
I_> & := \int_\Omega \left( |\nabla w_>|^2 + aw_>^2 - 15\, U_{x,\lambda}^4w_>^2 \right) dy \,, \\
R_<(\delta) & := - 15\,(1+\delta^{-1}) \int_\Omega U_{x,\lambda}^4 w_<^2\,dy \,, \\
R_>(\delta) & := -15\,\delta \int_\Omega U_{x,\lambda}^4 w_>^2\,dy +2\lambda^{-1/2} \int_\Omega a G_0(x,y)w_>\,dy \,.
\end{align*}
By completing the square we find
\begin{align*}
I_< & = -\lambda^{-1} (4\pi)^{-1} \iint_{\Omega\times\Omega}  G_0(x,y) a(y) G_a(y,y') a(y')G_0(y',x)\,dy\,dy' \\
& \quad  + \left\| (-\Delta+a)^{1/2} w_< + (-\Delta+a)^{-1/2} \lambda^{-1/2} a G_0(x,\cdot) \right\|^2 \,,
\end{align*}
and with $0\leq c\leq 1$ to be determined we estimate
\begin{align}
I_< & \geq -\lambda^{-1} (4\pi)^{-1} \iint_{\Omega\times\Omega}  G_0(x,y) a(y) G_a(y,y') a(y')G_0(y',x)\,dy\,dy' 
\nonumber \\ & \quad  \
+ c \left\| (-\Delta+a)^{1/2} w_< + (-\Delta+a)^{-1/2} \lambda^{-1/2} a G_0(x,\cdot) \right\|^2 \nonumber \\
& = -\lambda^{-1} (4\pi)^{-1} \iint_{\Omega\times\Omega}  G_0(x,y) a(y) G_a(y,y') a(y')G_0(y',x)\,dy\,dy'  \nonumber \\
& \quad + c \left\| (-\Delta+a)^{1/2} w + (-\Delta+a)^{-1/2} \lambda^{-1/2} a G_0(x,\cdot) \right\|^2
 \nonumber \\& \quad
 - c \left\| (-\Delta+a)^{1/2} w_> \right\|^2 - 2c \lambda^{-1/2} \int_\Omega a G_0(x,y)w_> \,dy \,. \label{i<}
\end{align}
According to Lemma \ref{lem-aux} there are $\rho>0$ and $C<\infty$ such that for all sufficiently small $\epsilon>0$,
$$
I_> \geq \rho \int_\Omega |\nabla w_>|^2\,dy - C \mu^{1/2} \lambda^{-1} \|\nabla w\|^2 \,.
$$
Since $a \in L^\infty(\Omega)$, we have
\begin{equation}
\label{operator norm upper bound }
\| (-\Delta + a)^{1/2} z\|^2 \leq C' \, \|\nabla z\|^2 \qquad \forall\,  z \in H^1_0(\Omega) \,.
\end{equation}
We apply this with $u=w_>$ and infer that
\begin{align*}
I_< + I_>(\delta) + R_<(\delta) + R_>
& \geq -\lambda^{-1} (4\pi)^{-1} \iint_{\Omega\times\Omega}  G_0(x,y) a(y) G_a(y,y') a(y')G_0(y',x)\,dy\,dy'  \nonumber \\
& \quad + c \left\| (-\Delta+a)^{1/2} w + (-\Delta+a)^{-1/2} \frac{\alpha}{\sqrt\lambda} a G_0(x,\cdot) \right\|^2\nonumber + R_1(\delta) + R_2(\delta)
\end{align*}
where
\begin{align*}
R_1(\delta) & = \rho \|\nabla w_>\|^2 - c C' \|\nabla w_>\|^2 - 15\, \delta \int_\Omega U_{x,\lambda}^4 w_>^2\,dy \,, \\
R_2(\delta) & = - C \mu^{1/2} \lambda^{-1} \|\nabla w\|^2 + 2(1-c)\lambda^{-1/2} \int_\Omega a G_0(x,y)w_>\,dy
- 15\,(1+\delta^{-1}) \int_\Omega U_{x,\lambda}^4 w_<^2\,dy \,.
\end{align*}
We now choose $c=\min\{1,\rho/(2C')\}$. Moreover, by \eqref{hs-ineq} we can choose a $\delta>0$, independent of $\epsilon$ and $\mu$ such that
$$
R_1(\delta)\geq 0 \,.
$$
From now on, we fix this value of $\delta$. 

It remains to show that $R_2(\delta)$ is $\mathcal O(\lambda^{-3/2})$ for an appropriate choice of $\mu$. By \eqref{esp-w} and \eqref{operator norm upper bound } and by the orthogonality \eqref{w-ort} we have 
\begin{equation} \label{w-ort-2}
\mathcal O(\lambda^{-1}) = \int_\Omega |\nabla w|^2\,dy \gtrsim \int_\Omega \left(|\nabla w|^2 + a w^2\right) dy \geq  \int_\Omega \left(|\nabla w_>|^2 + a w_>^2\right) dy \geq\,  \mu\,  \|w_>\|^2 \,.
\end{equation}
Thus, since $a\in L^\infty(\Omega)$ and since $G_0(x, \cdot)$ is uniformly bounded in $L^2(\Omega)$, we have
$$
\left| \int_\Omega a\,  G_0(x,y)w_>\,dy \right| \lesssim \|w_>\| \lesssim \mu^{-1/2} \lambda^{-1/2} \,.
$$
Moreover, by Lemma \ref{lem-regular},
$$
\int_\Omega U_{x,\lambda}^4 w_<^2\,dy \leq \|w_<\|_\infty^2 \int_\Omega U_{x,\lambda}^4\,dy \lesssim  \mu^{1/2} \|\nabla w\|^2 \int_{\R^3} U_{x,\lambda}^4\,dy \lesssim \mu^{1/2}\lambda^{-2} \,.
$$
Thus,
$$
R_2(\delta) \gtrsim - \left( \mu^{1/2} \lambda^{-2} + \mu^{-1/2} \lambda^{-1} \right).
$$
With the choice $\mu=\lambda$ the right side becomes $\mathcal O(\lambda^{-3/2})$, as claimed.
\end{proof}

Now we prove the main result of this section. 

\begin{proof}[Proof of Proposition \ref{thm-q}]
Inserting \eqref{w-lowerb} into \eqref{2-S} gives 
\begin{align}\label{S eps lower bound}
\mathcal S_{a+\epsilon V}[u_\eps] & \geq S + 4\pi \, \lambda^{-1} (S/3)^{-1/2} \phi_a(x) \nonumber \\
& \quad + (S/3)^{-1/2} c \left\| (-\Delta+a)^{1/2} w + (-\Delta+a)^{-1/2} \lambda^{-1/2} a G_0(x,\cdot) \right\|^2 + o(\lambda^{-1}) \,. 
\end{align}
We subtract $S(a+\epsilon V)$ from both sides, multiply by $\lambda$ and take the limsup as $\epsilon\to 0+$. Using the second relation in \eqref{eq:energybound} we obtain
\begin{align*}
0 \geq \limsup_{\epsilon\to 0} &  \left( \lambda (S-S(a+\epsilon V)) + 4\pi (S/3)^{-1/2} \phi_a(x) \right. \\
& \ \left. + (S/3)^{-1/2} c \lambda \left\| (-\Delta+a)^{1/2} w + (-\Delta+a)^{-1/2} \lambda^{-1/2} a G_0(x,\cdot) \right\|^2 \right).
\end{align*}
Since the three terms in the limsup are all non-negative (which for $\phi_a$ follows from Corollary \ref{cor-2}), we deduce that
$$
\lambda (S-S(a+\epsilon V)) = o(1) \,,
\qquad
\phi_a(x) = o(1)
$$
and
$$
\left\| (-\Delta+a)^{1/2} w + (-\Delta+a)^{-1/2} \lambda^{-1/2} a G_0(x,\cdot) \right\|^2 = o(\lambda^{-1}) \,.
$$
Since $-\Delta+a$ is coercive, the last bound implies
$$
\Big\| \nabla \Big( w + (-\Delta+a)^{-1} \lambda^{-1/2} a G_0(x,\cdot) \Big) \Big\|^2 = o(\lambda^{-1}) \,.
$$
By the resolvent identity,
$$
(-\Delta+a)^{-1} a G_0(x,\cdot) = G_0(x,\cdot) - G_a(x,\cdot) = H_a(x,\cdot) - H_0(x,\cdot) \,,
$$
and therefore, setting $q:=w+\lambda^{-1/2} (H_a(x,\cdot)-H_0(x,\cdot))$, the previous bound can be rewritten as $\|\nabla q\|^2= o(\lambda^{-1})$. This completes the proof of the proposition.
\end{proof}

\section{\bf A refined decomposition of almost minimizers}
\label{sec-new-decomp}

From Proposition \ref{thm-q} we infer that any sequence $(u_\eps)$ satisfying \eqref{appr-min} can be decomposed as 
$$
u_\epsilon=  \alpha \left( \psi_{x, \lambda} + q \right),
$$
where
$$
\psi_{x,\lambda} = PU_{x, \lambda} - \lambda^{-1/2} (H_a(x, \cdot)- H_0(x, \cdot))
$$
is as in the proof of the upper bound, see \eqref{eq:psi}, and where 
$$
\|\nabla q\| = o(\lambda^{-1/2}) \,.
$$
Thus, expanding $\mathcal S_{a+\epsilon V}[u_\epsilon]$ leads to an expression that coincides with the upper bound in Corollary \ref{cor-2} up to additional terms involving $q$. Using coercivity we will be able to show that the contribution from
$$
r := \Pi_{x,\lambda}^\bot q \,,
$$
the orthogonal projection of $q$ onto $T_{x,\lambda}^\bot$ in $H^1_0(\Omega)$, is negligible; see Lemma \ref{lem-g} below. The main focus in this section is on
$$
\Pi_{x,\lambda} q = \Pi_{x,\lambda} \left( w + \lambda^{-1/2} (H_a(x,\cdot)-H_0(x,\cdot)) \right) = \lambda^{-1/2}\, \Pi_{x,\lambda} (H_a(x,\cdot)-H_0(x,\cdot)),
$$
where the last identity follows from $w\in T_{x,\lambda}^\bot$. In Lemma \ref{expfinalq} we will prove that the contribution from $\Pi_{x,\lambda} q$ is negligible. This is not obvious and, in fact, somewhat surprising since $\Pi_{x,\lambda}q$ is of order $\lambda^{-1}$ and not smaller.


\subsection{Preliminary estimates}

Let us write
$$
\Pi_{x,\lambda} q = \beta \lambda^{-1} PU_{x,\lambda} + \gamma \partial_\lambda PU_{x,\lambda} + \sum_{j = 1}^3 \delta_j\, \lambda^{-3}  \partial_{x_j} PU_{x,\lambda} \,.
$$
Since $PU_{x,\lambda}$, $\partial_\lambda PU_{x,\lambda}$ and $\partial_{x_j} PU_{x,\lambda}$, $j=1,2,3$, are linearly independent for sufficiently large $\lambda$, the numbers $\beta$, $\gamma$ and $\delta_j$, $j=1,2,3$, (depending on $\epsilon$, of course) are uniquely determined. The choice of the different powers of $\lambda$ multiplying these coefficients is motivated by the following lemma.

\begin{lem}
\label{lem-bg}
As $\epsilon\to 0$, we have 
$$
\beta, \;  \gamma, \; \delta_j = \mathcal{O}(1) .
$$
\end{lem}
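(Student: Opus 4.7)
\emph{Plan.} The key starting observation is that, since $w\in T_{x,\lambda}^\perp$, we have
$$
\Pi_{x,\lambda} q \,=\, \lambda^{-1/2}\, \Pi_{x,\lambda}\bigl(H_a(x,\cdot)-H_0(x,\cdot)\bigr).
$$
Moreover, since $G_a(x,\cdot)$ and $G_0(x,\cdot)$ both vanish on $\partial\Omega$, the difference $(H_a-H_0)(x,\cdot) = G_0(x,\cdot)-G_a(x,\cdot)$ lies in $H^1_0(\Omega)$ and satisfies $-\Delta_y(H_a-H_0)(x,y) = -a(y) G_a(x,y)$ by \eqref{Ha-pde}. This makes integration by parts against the basis $\phi_1=PU_{x,\lambda}$, $\phi_2=\partial_\lambda PU_{x,\lambda}$, $\phi_{2+j}=\partial_{x_j}PU_{x,\lambda}$ $(j=1,2,3)$ of $T_{x,\lambda}$ legitimate.

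The plan is then to solve the linear system determining the coefficients. Writing $\Pi_{x,\lambda} q = c_1\phi_1 + c_2\phi_2 + \sum_{j=1}^3 c_{2+j}\phi_{2+j}$, so that $\beta=\lambda c_1$, $\gamma=c_2$ and $\delta_j=\lambda^3 c_{2+j}$, the coefficients $c_k$ solve $Mc=b$ with $M_{jk}=\int\nabla\phi_j\cdot\nabla\phi_k$ and
$$
b_k\,=\,\int_\Omega \nabla\phi_k\cdot\nabla q\,dy\,=\,\lambda^{-1/2}\int_\Omega \nabla\phi_k\cdot\nabla(H_a-H_0)(x,\cdot)\,dy.
$$
First I would estimate each $b_k$ by integrating by parts using $-\Delta\phi_1=3U_{x,\lambda}^5$, $-\Delta\phi_2=15U_{x,\lambda}^4\partial_\lambda U_{x,\lambda}$ and $-\Delta\phi_{2+j}=15U_{x,\lambda}^4\partial_{x_j}U_{x,\lambda}$, and then invoking arguments of the type already used in Section~2. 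For $b_1$, Lemma \ref{lem-uh} applied with $b=a$ and $b=0$, combined with $\phi_a(x)=o(1)$ from Proposition \ref{thm-q}, gives $b_1=O(\lambda^{-1})$. For $b_2$, the identity $\partial_\lambda U_{x,\lambda}=(2\lambda)^{-1}U_{x,\lambda}(1-\lambda^2|y-x|^2)/(1+\lambda^2|y-x|^2)$ together with the rescaling $z=\lambda(y-x)$ yields $\int U_{x,\lambda}^4\partial_\lambda U_{x,\lambda}\cdot f(y)\,dy=O(\lambda^{-3/2})$ for bounded $f$, whence $b_2=O(\lambda^{-2})$. For $b_{2+j}$, the constant and $|y-x|$ terms in the Taylor expansion \eqref{taylor-h} of $(H_a-H_0)(x,\cdot)$ at $x$ contribute zero by parity of $U_{x,\lambda}^4\partial_{x_j}U_{x,\lambda}$ about $x$, so the leading contribution is the linear term and gives $b_{2+j}=O(\lambda^{-1})$.

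Next I would invert $M$. By \eqref{nabla-norm} one has $\|\nabla\phi_1\|\sim 1$, $\|\nabla\phi_2\|\sim\lambda^{-1}$, $\|\nabla\phi_{2+j}\|\sim\lambda$, so writing $M=DGD$ with $D=\mathrm{diag}(\|\nabla\phi_k\|)$, the estimates \eqref{eq:overlap} and \eqref{eq:gramschmidt} yield $M^{-1}=D^{-1}(I+O(\lambda^{-1}))D^{-1}$. Combining with the bounds on the $b_k$ gives $c_1=O(\lambda^{-1})$, $c_2=O(1)$ and $c_{2+j}=O(\lambda^{-3})$, hence $\beta,\gamma,\delta_j=O(1)$ after multiplying by the appropriate powers of $\lambda$.

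The main obstacle is the sharp estimation of the overlap integrals $b_k$. The a priori bound $\|\nabla\Pi_{x,\lambda}q\|\leq\|\nabla q\|=o(\lambda^{-1/2})$ from Proposition \ref{thm-q} alone, combined with the above matrix inversion, only yields $\beta,\gamma=o(\lambda^{1/2})$ and $\delta_j=o(\lambda^{3/2})$; to reach $O(1)$ one must genuinely exploit the symmetry of $U_{x,\lambda}$ about $x$, in the spirit of the proof of Lemma \ref{lem-uh}, to extract the required cancellations in $b_2$ and $b_{2+j}$.
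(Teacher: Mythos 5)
Your proposal is correct and follows essentially the same route as the paper: express $\Pi_{x,\lambda}q$ in the basis $\{\phi_k\}$, estimate the overlaps $\int\nabla\phi_k\cdot\nabla q$ by integrating by parts against the explicit Laplacians $\Delta\phi_k$ (with the parity cancellation for the translation modes), and invert the approximate Gram matrix; the paper carries this out with the normalized basis $\tilde\phi_k=\phi_k/\|\nabla\phi_k\|$, but that is only a cosmetic difference. Two minor inaccuracies that do not affect the argument: invoking $\phi_a(x)=o(1)$ for the $b_1$ bound is unnecessary (boundedness of $\phi_a-\phi_0$ suffices), and the estimate for $b_2$ requires only the $\lambda^{-3/2}$ decay of $\int U_{x,\lambda}^4\partial_\lambda U_{x,\lambda}\cdot f\,dy$, not a symmetry cancellation as the concluding remark suggests.
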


\begin{proof}
We recall that the functions $\tilde\phi_j$, $j=1,\ldots,5$, were introduced in \eqref{psi-j}. Let
$$
a_j := \int_\Omega \nabla\tilde\phi_j\cdot\nabla q\,dy \,,
\qquad j =1,\ldots,5 \,.
$$

\emph{Step 1.} We shall show that
\begin{equation}
\label{eq:orthocoeff}
a_1, a_2 = \mathcal O(\lambda^{-1}) \,,
\qquad
a_3, a_4, a_5 =\mathcal O(\lambda^{-2}) \,.
\end{equation}

Since $-\lambda^{-1/2}(H_a(x,\cdot) - H_0(x,\cdot)) + q = w \in T_{x,\lambda}^\perp$, we have
\begin{align*}
a_j & = \lambda^{-1/2} \int_\Omega \nabla\tilde\phi_j\cdot\nabla_y (H_a(x,y)-H_0(x,y))\,dy 
= - \lambda^{-1/2} \int_\Omega (\Delta\tilde\phi_j) (H_a(x,y)-H_0(x,y))\,dy \,.
\end{align*}
Formulas for the Laplacians $\Delta\tilde\phi_j$ are given in \eqref{eq:laplzeromodes1} and the quantities $\|\nabla\phi_j\|$ appearing there were estimated in \eqref{nabla-norm}. For $a_1$, the integral $\int_\Omega U_{x,\lambda}^5 (H_a(x,y)-H_0(x,y))\,dy$ is $\mathcal O(\lambda^{-1/2})$ according to Lemma \ref{lem-uh}, which proves the claim in \eqref{eq:orthocoeff}. To bound $a_j$ for $j=2,\ldots,5$ we compute
\begin{align*}
\partial_\lambda U_{x, \lambda}(y) = \frac{\lambda^{-1/2}}{2} \frac{1 - \lambda^2|y-x|^2}{(1+\lambda^2|y-x|^2)^{3/2}} \,, \qquad
\partial_{x_i} U_{x, \lambda}(y) = \lambda^{5/2} \frac{y_i-x_i}{(1+\lambda^2|y-x|^2)^{3/2}} \,, \quad i = 1,2,3.
\end{align*}
This expression and straightforward bounds lead to the claim for $a_2$ in \eqref{eq:orthocoeff}.

To prove \eqref{eq:orthocoeff} for $a_j$ with $j=3,4,5$ we need to bound
$$
\int_\Omega  (H_a(x,y) - H_0(x,y)) U_{x,\lambda}^4 \partial_{x_j} U_{x,\lambda}\,dy \,.
$$
From Step 1 in the proof of Lemma \ref{lem-uh}, recalling \eqref{eq-d}, we infer that there are $\rho>0$ and $C>0$, both independent of $\epsilon$, such that
$$
\left| H_a(x,y) - H_0(x,y) - H_a(x,x) + H_0(x,x) \right| \lesssim |y-x| 
\qquad\text{for all}\ y\in B_\rho(x) \,.
$$
Since the function $U_{x,\lambda}^4 \partial_{x_j} U_{x,\lambda}$ is odd, we have
$$
\int_{B_\rho(x)}  (H_a(x,x) - H_0(x,x)) U_{x,\lambda}^4 \partial_{x_j} U_{x,\lambda}\,dy = 0 \,.
$$
On the other hand, using the above expression for $\partial_{x_j} U_{x,\lambda}$ we find
$$
\int_{\Omega} \min\{|y-x|,\rho\} \left| U_{x,\lambda}^4 \partial_{x_j} U_{x,\lambda} \right| dy = \mathcal O(\lambda^{3/2}) \,.
$$
This proves \eqref{eq:orthocoeff} for $j=3,4,5$.

\emph{Step 2.} Let us deduce the statement of the lemma. We have
$$
\Pi_{x,\lambda} q = \sum_{j=1}^5 \tilde a_j \tilde\phi_j
$$
with
$$
\tilde a_1 := \beta \lambda^{-1} \|\nabla PU_{x,\lambda} \| \,,
\qquad
\tilde a_2 := \gamma \|\nabla\partial_\lambda PU_{x,\lambda} \| \,,
\qquad
\tilde a_j := \delta_j \lambda^{-3} \|\nabla \partial_{x_{j-2}} PU_{x,\lambda} \| \,,\  j=3,4,5 \,.
$$
In view of \eqref{nabla-norm}, the assertion of the lemma is equivalent to
\begin{equation}
\label{eq:orthocoeff2}
\tilde a_1, \tilde a_2 =\mathcal O( \lambda^{-1}) \,,
\qquad
\tilde a_j = \mathcal O( \lambda^{-2}) \,,\ j=3,4,5 \,.
\end{equation}
With respect to the orthonormal system $\psi_j$, $j=1,\ldots,5$, from \eqref{eq:gramschmidt2} we have
$$
\Pi_{x,\lambda} q = \sum_{j=1}^5 (\nabla\psi_j,\nabla q) \psi_j \,. 
$$
Using \eqref{eq:gramschmidt2} twice to express $\psi_j$ in terms of $\tilde\phi_k$'s we obtain
$$
\Pi_{x,\lambda} q = \sum_{k=1}^5 \sum_{\ell=1}^5 (G^{-1})_{k,\ell} (\nabla\tilde\phi_\ell,\nabla q) \, \tilde\phi_k = \sum_{k=1}^5 \sum_{\ell=1}^5 (G^{-1})_{k,\ell}\, a_\ell \, \tilde\phi_k \,.
$$
Thus,
$$
\tilde a_k = \sum_{\ell=1}^5 (G^{-1})_{k,\ell}\, a_\ell \,,
\qquad k=1,\ldots, 5 \,.
$$
Similarly as in \eqref{eq:gramschmidt} one finds
\begin{equation*}
(G^{-1})_{j,k} = \delta_{j,k} + \mathcal O(\lambda^{-1}) \,,
\end{equation*}
and then \eqref{eq:orthocoeff2} follows from \eqref{eq:orthocoeff}. This completes the proof of the lemma.
\end{proof}

\begin{rem}\label{coeffsize}
The same method of proof shows that there are non-zero numbers $\beta_0,\gamma_0,\delta_{0,j}$ such that
$$
\beta\to\beta_0 \,,
\quad
\gamma\to\gamma_0 \,,
\quad
\delta_{0,j}\to\delta_0
$$
as $\epsilon\to 0$. Indeed, proceeding as in Step 1 above one can show that $\lambda a_k$ for $k=1,2$ and $\lambda^2 a_k$ for $k=3,4,5$ have a non-zero limit as $\epsilon\to 0$. As in Step 2 above, this implies that $\lambda \tilde a_k$ for $k=1,2$ have a non-zero limit as $\epsilon\to 0$. In order to compute the limits of $\lambda \tilde a_k$ for $k=3,4,5$ one needs to use, in addition, the fact that $(G^{-1})_{k,\ell} = \delta_{k,\ell} + \mathcal O(\lambda^{-2})$ for $k=3,4,5$. Indeed, by a Neumann series for $G=1-(1-G)$ one finds
$$
(G^{-1})_{k,\ell} = (2- G)_{k,\ell} + \mathcal O(\lambda^{-2}) = 2\delta_{k,\ell} - \int_\Omega \nabla\tilde\phi_k\cdot\nabla\tilde\phi_\ell\,dy + \mathcal O(\lambda^{-2}) \,,
$$
and then one can use bounds from \cite[Appendix B]{rey2} for the integral on the right side.
\end{rem}


\subsection{A third expansion}
\label{section expansion}

In this subsection, we shall prove the following lemma.

\begin{lem}\label{expfinalq}
As $\epsilon\to 0$,
\begin{align}\label{eq:expfinalq}
\mathcal S_{a+\epsilon V}[u_\epsilon] & = \mathcal S_{a+\epsilon V}[\psi_{x,\lambda}] + (S/3)^{-1/2} \left( \mathcal E_0[r] - \frac{N_0}{3\,D_0}\mathcal I[r] \right)
+ o(\lambda^{-2}) + o(\epsilon \lambda^{-1})
\end{align}
with 
\begin{equation}
\label{eq:defnd}
N_0 := \int_\Omega \left( |\nabla \psi_{x, \lambda}|^2 + (a+\epsilon V)\psi_{x, \lambda}^2\right)dy, 
\qquad 
D_0 := \int_\Omega \psi_{x,\lambda}^6\,dy
\end{equation}
and 
\begin{align}\label{eq:defi}
\mathcal I[r] & :=  -30 \, \lambda^{-1/2} \int_\Omega U_{x,\lambda}^4 H_a(x,y)r\,dy + 15 \int_\Omega U_{x,\lambda}^4 r^2\,dy + 20 \int_\Omega U_{x,\lambda}^3 r^3\,dy \, .
\end{align}
\end{lem}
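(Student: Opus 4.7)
The plan is to Taylor-expand $\mathcal S_{a+\epsilon V}[u_\epsilon] = N/D^{1/3}$ around $\psi_{x,\lambda}$, with $u_\epsilon/\alpha = \psi_{x,\lambda} + p + r$ and $p := \Pi_{x,\lambda}q$, using
\begin{equation*}
\frac{N}{D^{1/3}} = \frac{N_0}{D_0^{1/3}} + \frac{\Delta N}{D_0^{1/3}} - \frac{N_0\,\Delta D}{3D_0^{4/3}} - \frac{\Delta N\,\Delta D}{3 D_0^{4/3}} + \frac{2N_0 (\Delta D)^2}{9 D_0^{7/3}} + \cdots,
\end{equation*}
with $\Delta N = 2\mathcal B(\psi_{x,\lambda}, p+r) + \mathcal B(p+r, p+r)$ (where $\mathcal B(u,v)=\int_\Omega \nabla u\cdot\nabla v + (a+\epsilon V)uv$) and $\Delta D$ the corresponding sixth-power expansion. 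I will split each term into $\psi$-only (vanishing), $r$-only, $p$-only, and mixed $p$-$r$ contributions, show that the $r$-only part reproduces $(S/3)^{-1/2}\bigl(\mathcal E_0[r] - \tfrac{N_0}{3D_0}\mathcal I[r]\bigr)$ with $\mathcal E_0[r] := \int_\Omega(|\nabla r|^2 + ar^2)\,dy$, and demonstrate that the other two families of terms are individually $o(\lambda^{-2}) + o(\epsilon\lambda^{-1})$. Throughout I will use $\|\nabla q\| = o(\lambda^{-1/2})$, $\|\nabla p\| = O(\lambda^{-1})$, $\phi_a(x) = o(1)$ from Proposition~\ref{thm-q} and Lemma~\ref{lem-bg}, the equation $-\Delta\psi_{x,\lambda} = 3U_{x,\lambda}^5 - \lambda^{-1/2}aG_a$ from \eqref{eq:eqpsi}, and the decomposition \eqref{PU exp} $U = PU + \lambda^{-1/2}H_0 + f_{x,\lambda}$.

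For the $r$-contribution, the key observation is the \emph{exact} identity $\int U^5 r = 0$: writing $3\int U^5 r = \int\nabla U\cdot\nabla r$, the $PU$-part vanishes by $r\in T_{x,\lambda}^\bot$, while the $H_0$-part and the $f_{x,\lambda}$-part both vanish because $H_0(x,\cdot)$ and $f_{x,\lambda}$ are harmonic in $\Omega$ (so $\int \nabla(\text{harmonic})\cdot \nabla r = 0$ for $r\in H^1_0(\Omega)$). Hence $\mathcal B(\psi, r) = o(\lambda^{-2}) + o(\epsilon\lambda^{-1})$ after handling the $a$ and $\epsilon V$ corrections by $L^{6/5}$--$L^6$ duality (using Lemma~\ref{lem-V} and $\|\psi\|_{6/5} = O(\lambda^{-1/2})$), giving $\Delta N_r = \mathcal E_0[r] + o(\lambda^{-2}) + o(\epsilon\lambda^{-1})$. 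Expanding $\psi^k = (U - \lambda^{-1/2}H_a - f_{x,\lambda})^k$ and using $\int U^5 r = 0$ yields $6\int\psi^5 r = -30\lambda^{-1/2}\int U^4 H_a r + o(\lambda^{-2})$, $15\int\psi^4 r^2 = 15\int U^4 r^2 + o(\lambda^{-2})$ and $20\int\psi^3 r^3 = 20\int U^3 r^3 + o(\lambda^{-2})$, so $\Delta D_r = \mathcal I[r] + o(\lambda^{-2})$. The bilinear corrections $\Delta N_r\Delta D_r$ and $(\Delta D_r)^2$ are $o(\lambda^{-2})$ by Sobolev, and the discrepancy $D_0^{-1/3} - (S/3)^{-1/2} = o(\lambda^{-1})$ (Theorem~\ref{exppsi} with $\phi_a(x)=o(1)$) times $\mathcal E_0[r] - \tfrac{N_0}{3D_0}\mathcal I[r] = o(\lambda^{-1})$ is absorbed into $o(\lambda^{-2})$.

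The crux of the proof is the vanishing of the $p$-contribution. At first order I will prove
\begin{equation*}
\mathcal B(\psi_{x,\lambda}, p) - \frac{N_0}{D_0}\int_\Omega \psi_{x,\lambda}^5\, p\,dy = o(\lambda^{-2}) + o(\epsilon\lambda^{-1}),
\end{equation*}
by writing (via the equation for $\psi$) $\mathcal B(\psi,p) = 3\int U^5 p + O(\lambda^{-3}) + o(\epsilon\lambda^{-1})$ and $\int\psi^5 p = \int U^5 p - 5\lambda^{-1/2}\int U^4 H_a p + o(\lambda^{-2})$. The two essential inputs are $N_0/D_0 = 3 + o(\lambda^{-1})$ (from Theorem~\ref{exppsi} and $\phi_a(x) = o(1)$) and $\int U^4 H_a p = o(\lambda^{-3/2})$, the latter obtained by decomposing $p$ along $\{PU_{x,\lambda}, \partial_\lambda PU_{x,\lambda}, \partial_{x_j}PU_{x,\lambda}\}$, invoking Lemma~\ref{lem-uh} for the $PU$-component (where $\phi_a(x) = o(1)$ supplies the needed smallness), and running analogues of its proof for the $\partial_\lambda$ and $\partial_x$ components. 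Since $\int U^5 p = O(\lambda^{-1})$, the combination yields the stated first-order bound, and the contribution $(2/D_0^{1/3})\bigl[\mathcal B(\psi,p) - (N_0/D_0)\int\psi^5 p\bigr]$ to the Taylor expansion is $o(\lambda^{-2}) + o(\epsilon\lambda^{-1})$.

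After using the first-order identity to simplify the $-\Delta N\,\Delta D/(3D_0^{4/3})$ term, the pure second-order $p$-contribution reduces to
\begin{equation*}
D_0^{-1/3}\Big[\mathcal B(p,p) - 5\frac{N_0}{D_0}\int_\Omega \psi^4 p^2\,dy + 4\frac{N_0}{D_0^2}\Big(\int_\Omega \psi^5 p\,dy\Big)^2\Big],
\end{equation*}
which I verify is $o(\lambda^{-2})$ component-by-component on the basis of $T_{x,\lambda}$: for $p = \beta\lambda^{-1}PU_{x,\lambda}$ the bracket vanishes algebraically at leading order from $\int|\nabla PU|^2 = 3\int U^5 PU$ and $\int\psi^4 PU^2, \int\psi^5 PU \to (S/3)^{3/2}$, giving the cancellation $3 - 15 + 12 = 0$ and leaving $O(\lambda^{-3})$; for $p = \gamma\partial_\lambda PU_{x,\lambda}$ or $\delta_j\lambda^{-3}\partial_{x_j}PU_{x,\lambda}$, the cancellation is driven by the non-degeneracy identities $-\Delta\partial_\lambda PU = 15U^4\partial_\lambda U$ and $-\Delta\partial_{x_j}PU = 15U^4\partial_{x_j}U$ (obtained by differentiating $-\Delta PU = 3U^5$). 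These same non-degeneracy identities, combined with $r\in T_{x,\lambda}^\bot$ and the harmonicity of $\partial_\lambda(\lambda^{-1/2}H_0)$ and $\partial_\lambda f_{x,\lambda}$ (both harmonic in $\Omega$), give the \emph{exact} identities $\int U^4 \partial_\lambda U\cdot r = \int U^4 \partial_{x_j} U\cdot r = 0$, upgrading the naive Hölder bound $\int U^4 pr = O(\lambda^{-3/2})$ to $\int U^4 pr = o(\lambda^{-5/2})$, which controls the cross term $30\int\psi^4 pr$. The remaining mixed contributions $\mathcal B(p,r) = \int(a+\epsilon V)pr$, $60\int\psi^3 p^2 r$, $60\int\psi^3 pr^2$ are $o(\lambda^{-2}) + o(\epsilon\lambda^{-1})$ by Hölder with $\|p\|_{6/5} = O(\lambda^{-3/2})$. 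The main obstacle I anticipate is the bookkeeping required to track and eliminate the many competing $O(\lambda^{-2})$ terms via the combined action of three cancellation mechanisms: (i) the balance $N_0 \approx 3D_0$ from the upper-bound expansion, (ii) the non-degeneracy of the Sobolev optimizer $U_{x,\lambda}$, and (iii) the smallness $\phi_a(x) = o(1)$.
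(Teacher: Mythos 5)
Your approach is correct but organizes the cancellation differently from the paper. The paper splits $\Pi_{x,\lambda}q = g + h$ (with $g$ spanned by $PU_{x,\lambda}$ and $\partial_\lambda PU_{x,\lambda}$, $h$ by the $\partial_{x_j}PU_{x,\lambda}$), shows in Lemmas \ref{expfinaln} and \ref{expfinald} that $h$ is negligible and isolates $N_1 = \int|\nabla g|^2 + 2\mathcal E_0[\psi,g]$ and $D_1 = 6\int\psi^5 g + 15\int\psi^4 g^2$, then explicitly computes these to two orders in Appendix A (formulas \eqref{eq:n1}, \eqref{eq:d1}) and checks the combined remainder $A$ vanishes using $N_1 - D_1 = -3\pi^2\beta^2\lambda^{-2} + o(\lambda^{-2})$, $D_1^2/(3D_0) = 3\pi^2\beta^2\lambda^{-2} + o(\lambda^{-2})$, and the numerical coincidence $(S/3)^{3/2} = \pi^2/4$. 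You instead keep $p = \Pi_{x,\lambda}q$ whole and disentangle the linear-in-$p$ from the quadratic-in-$p$ part of the Taylor expansion: the linear part vanishes by the approximate Euler--Lagrange identity $\mathcal B(\psi,p) - (N_0/D_0)\int\psi^5 p = o(\lambda^{-2}) + o(\epsilon\lambda^{-1})$ (reflecting that $\psi$ nearly extremizes the quotient, with $\phi_a(x) = o(1)$ and $\int U^4 H_a p = o(\lambda^{-3/2})$ as the two inputs you correctly identify), while the quadratic part vanishes via the algebraic cancellation $3 - 15 + 12 = 0$ coming from the structure of the Sobolev Hessian along $T_{x,\lambda}$. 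This is the same cancellation the paper expresses as $(N_1 - D_1) + D_1^2/(3D_0) = o(\lambda^{-2})$, but your formulation makes the variational mechanism explicit and avoids having to extract and match the numerical coefficients $\tfrac{3\pi^2}{2}$, $\tfrac{15\pi^2}{64}$, etc. The trade-off: you introduce the additional exact identities $\int U^4\partial_\lambda U\,r = 0$ and $\int U^4\partial_{x_j}U\,r = 0$ (valid, by differentiating $-\Delta PU = 3U^5$ and using $r\in T_{x,\lambda}^\bot$), and you still need the same underlying expansions (Lemmas \ref{lem-uh}, \ref{lem-uh2} and their analogues for $\partial_\lambda U$, $\partial_{x_j}U$). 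Your writeup leaves implicit the off-diagonal terms among the $T_{x,\lambda}$-basis components inside $\mathcal B(p,p)$, $\int\psi^4 p^2$ and $(\int\psi^5 p)^2$, as well as the full accounting of the mixed $p$--$r$ interactions in $(\Delta D)^2$ and the cubic/quartic tails of $\Delta D$; these all work out (the cross-terms are controlled by \eqref{eq:overlap}, \eqref{nabla-norm} and Hölder, exactly as in your diagonal estimates), but they should be spelled out in a complete proof.
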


We emphasize that the coefficients $\beta$, $\gamma$ and $\delta_j$ enter only into the remainders $o(\lambda^{-2}) + o(\epsilon \lambda^{-1})$. This is somewhat surprising since $\beta$ enters to orders $\lambda^{-1}$ and $\lambda^{-2}$ and $\gamma$ enters to order $\lambda^{-2}$ in the expansion of the numerator and the denominator.

In the following, it will be convenient to abbreviate
$$
g := \beta \lambda^{-1} PU_{x,\lambda} + \gamma \partial_\lambda PU_{x,\lambda} \,,
\qquad
h := \sum_{j=1}^3 \delta_j \lambda^{-3} \partial_{x_j} PU_{\lambda,x} \,,
$$
so that
$$
u = \alpha ( \psi_{x,\lambda} + g + h + r) \,.
$$
We record the bounds
\begin{equation}
\label{eq:hbound}
\|\nabla g\| = \mathcal O(\lambda^{-1}) \,,
\qquad
\|\nabla h\| = \mathcal O(\lambda^{-2}) \,,
\qquad
\|\nabla r\| = o(\lambda^{-1/2}) \,.
\end{equation}
Indeed, the bounds on $g$ and $h$ follow from Lemma \ref{lem-bg} together with \eqref{nabla-norm} and that for $r$ follows from Proposition \ref{thm-q} since, by orthogonality, $\|\nabla r\| \leq \|\nabla q\|$.

We will also use the fact that
\begin{equation}
\label{eq:hboundlapl1}
\|\Delta h\|_1 = \mathcal O(\lambda^{-5/2}) \,.
\end{equation}
This follows from Lemma \ref{lem-bg} together with \eqref{eq:laplzeromodes1} and the same bounds that led to \eqref{eq:alphacoeff}.

We will obtain Lemma \ref{expfinalq} from separate expansions of the numerator and the denominator, which we state in the following two lemmas.


\subsubsection*{Expanding the numerator}
\label{ssec expanding numerator}

We abbreviate
$$
\mathcal E_\epsilon[v] := \int_\Omega \left( |\nabla v|^2 + (a+\epsilon V)v^2\right)dy
$$
and write $\mathcal E_\epsilon[v_1,v_2]$ for the associated bilinear form. Recall that $N_0$ was defined in \eqref{eq:defnd}. We shall show

\begin{lem}\label{expfinaln}
As $\epsilon\to 0$,
$$
\alpha^{-2} \mathcal E_\epsilon[u_\epsilon] = N_0 + N_1 + \mathcal E_0[r] + o(\lambda^{-2}) + o(\epsilon \lambda^{-1})\, ,
$$
where
$$
N_1 := \int_\Omega |\nabla g|^2\,dy + 2\,\mathcal E_0[\psi_{x,\lambda},g] \,.
$$
\end{lem}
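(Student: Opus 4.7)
The plan is to write $u_\epsilon/\alpha=\psi_{x,\lambda}+g+h+r$ and expand $\alpha^{-2}\mathcal{E}_\epsilon[u_\epsilon]=\mathcal{E}_\epsilon[\psi_{x,\lambda}+g+h+r]$ by bilinearity into ten pieces; $\mathcal{E}_\epsilon[\psi_{x,\lambda}]$ equals $N_0$ by definition, and the task is to extract $\int|\nabla g|^2$ from $\mathcal{E}_\epsilon[g]$, $2\mathcal{E}_0[\psi_{x,\lambda},g]$ from $2\mathcal{E}_\epsilon[\psi_{x,\lambda},g]$, and $\mathcal{E}_0[r]$ from $\mathcal{E}_\epsilon[r]$, while showing that every other contribution is $o(\lambda^{-2})+o(\epsilon\lambda^{-1})$. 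Beyond the $H^1$ bounds \eqref{eq:hbound} I will use the sharp $L^2$ sizes $\|g\|_2=\mathcal{O}(\lambda^{-3/2})$ and $\|h\|_2=\mathcal{O}(\lambda^{-3})$, derived from the explicit representations of $g$ and $h$ together with standard scaling integrals for $U_{x,\lambda}$ and its parameter derivatives; the bound $\|\psi_{x,\lambda}\|_{6/5}=\mathcal{O}(\lambda^{-1/2})$; and the orthogonality $g,h\in T_{x,\lambda}$, $r\in T_{x,\lambda}^\perp$, which forces $\int\nabla g\cdot\nabla r=\int\nabla h\cdot\nabla r=0$. Cauchy-Schwarz with these sizes then immediately handles $\mathcal{E}_\epsilon[h]$, $\mathcal{E}_\epsilon[g,h]$, $\mathcal{E}_\epsilon[g,r]$, $\mathcal{E}_\epsilon[h,r]$, together with the $\int(a+\epsilon V)g^2$, $\epsilon\int V\psi_{x,\lambda}g$ and $\epsilon\int Vr^2$ error pieces of the matched terms.

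The substance lies in the two cross-terms $\mathcal{E}_\epsilon[\psi_{x,\lambda},r]$ and $\mathcal{E}_\epsilon[\psi_{x,\lambda},h]$. Since $\psi_{x,\lambda}$, $r$ and $h$ all lie in $H^1_0(\Omega)$, I will integrate by parts using $-\Delta\psi_{x,\lambda}=3U_{x,\lambda}^5-\lambda^{-1/2}aG_a(x,\cdot)$ from \eqref{eq:eqpsi} and then combine the $-\lambda^{-1/2}aG_a$ contribution with the $a\psi_{x,\lambda}(\cdot)$ contribution via the identity $\psi_{x,\lambda}-\lambda^{-1/2}G_a=PU_{x,\lambda}-\lambda^{-1/2}G_0$. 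The pointwise bound \eqref{u-est} gives $\|PU_{x,\lambda}-\lambda^{-1/2}G_0\|_{6/5}=\mathcal{O}(\lambda^{-2})$, so each resulting potential integral is $o(\lambda^{-2})$. For $r$ the leading piece $3\int U_{x,\lambda}^5 r$ moreover vanishes outright, since $r\in T_{x,\lambda}^\perp\ni PU_{x,\lambda}$ combined with $-\Delta PU_{x,\lambda}=3U_{x,\lambda}^5$ gives $\int U_{x,\lambda}^5 r=0$, so $\mathcal{E}_\epsilon[\psi_{x,\lambda},r]$ is already $o(\lambda^{-2})+o(\epsilon\lambda^{-1})$.

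The main obstacle is $\mathcal{E}_\epsilon[\psi_{x,\lambda},h]$: since $h\in T_{x,\lambda}$, no analogous orthogonality kills $\int U_{x,\lambda}^5 h=\lambda^{-3}\sum_j\delta_j\int U_{x,\lambda}^5\partial_{x_j}PU_{x,\lambda}$, and this integral must be evaluated by hand. My plan is to exploit that $PU_{x,\lambda}=0$ on $\partial\Omega$ for every interior $x$, so $\partial_{x_j}PU_{x,\lambda}$ vanishes on $\partial\Omega$ as well; a further integration by parts together with harmonicity of $U_{x,\lambda}-PU_{x,\lambda}$ in $\Omega$ (which kills the cross-term with $\nabla(U_{x,\lambda}-PU_{x,\lambda})$) yields
\[
\int U_{x,\lambda}^5\partial_{x_j}PU_{x,\lambda}=\tfrac{1}{3}\int\nabla U_{x,\lambda}\cdot\nabla\partial_{x_j}PU_{x,\lambda}=\tfrac{1}{6}\partial_{x_j}\|\nabla PU_{x,\lambda}\|^2=\mathcal{O}(\lambda^{-1}),
\]
where the last equality invokes the standard expansion $\|\nabla PU_{x,\lambda}\|^2=3^{-1/2}S^{3/2}-4\pi\phi_0(x)\lambda^{-1}+o(\lambda^{-1})$ from \cite[Appendix~B]{rey2}, whose dependence on $x$ is through the smooth function $\phi_0$. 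Multiplying by the $\lambda^{-3}$ prefactor gives $\int U_{x,\lambda}^5 h=\mathcal{O}(\lambda^{-4})=o(\lambda^{-2})$, finishing the last cross-term and the lemma.
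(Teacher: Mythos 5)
Your overall plan mirrors the paper's (expand $\mathcal E_\epsilon$ by bilinearity, use orthogonality against $T_{x,\lambda}^\perp$ to kill $\int\nabla g\cdot\nabla r$ and $\int\nabla h\cdot\nabla r$, Sobolev/H\"older for the rough pieces, and integrate by parts via \eqref{eq:eqpsi} for $\mathcal E_\epsilon[\psi_{x,\lambda},r]$ followed by the identity $\psi_{x,\lambda}-\lambda^{-1/2}G_a=PU_{x,\lambda}-\lambda^{-1/2}G_0$ and an $L^{6/5}$ bound). That part is fine and essentially coincides with the paper's Step 3.

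Your treatment of $\mathcal E_\epsilon[\psi_{x,\lambda},h]$ does differ from the paper's, and the final step has a gap. You write $\int U_{x,\lambda}^5\partial_{x_j}PU_{x,\lambda}=\tfrac16\partial_{x_j}\|\nabla PU_{x,\lambda}\|^2$ (correct) and then conclude this is $\mathcal O(\lambda^{-1})$ because $\|\nabla PU_{x,\lambda}\|^2 = 3^{-1/2}S^{3/2}-4\pi\phi_0(x)\lambda^{-1}+o(\lambda^{-1})$ with $\phi_0$ smooth. But an expansion that is uniform in $x$ does not license termwise $x$-differentiation: the $o(\lambda^{-1})$ remainder could have an $x$-derivative that is not $o(\lambda^{-1})$ or even $\mathcal O(\lambda^{-1})$, so the chain $\frac16\partial_{x_j}\|\nabla PU_{x,\lambda}\|^2=\mathcal O(\lambda^{-1})$ is unjustified as written. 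The paper instead avoids this by using the overlap estimates directly: \eqref{eq:overlap} gives $\int\nabla\tilde\phi_1\cdot\nabla\tilde\phi_{j+2}=\mathcal O(\lambda^{-1})$, which with \eqref{nabla-norm} yields $\int\nabla PU_{x,\lambda}\cdot\nabla\partial_{x_j}PU_{x,\lambda}=\mathcal O(1)$, hence $\int U_{x,\lambda}^5 h\,dy=\mathcal O(\lambda^{-3})$ — already enough, since you only need $o(\lambda^{-2})$. (The sharper $\mathcal O(\lambda^{-1})$ for $\int\nabla PU_{x,\lambda}\cdot\nabla\partial_{x_j}PU_{x,\lambda}$ is in fact true, via the $\mathcal O(\lambda^{-2})$ overlap bounds of \cite[Appendix~B]{rey2} quoted in Remark~\ref{coeffsize}, but it should be cited from there rather than derived by differentiating the asymptotic.) With that one step replaced, your argument closes; note also that the paper gets to $\int\nabla PU_{x,\lambda}\cdot\nabla h$ by a different route, namely integrating by parts the $\lambda^{-1/2}(H_a-H_0)$ part against $h$ using the bound $\|\Delta h\|_1=\mathcal O(\lambda^{-5/2})$ of \eqref{eq:hboundlapl1}.
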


\begin{proof}
\emph{Step 1.} We show that the contribution from $h$ to $\alpha^{-2} \mathcal E_\epsilon[u_\epsilon]$ is negligible, that is,
\begin{equation}
\label{eq:expfinaln1}
\alpha^{-2} \mathcal E_\epsilon[u_\epsilon] = \mathcal E_\epsilon[\psi_{x,\lambda}+g+r] + o(\lambda^{-5/2}) \,.
\end{equation}
Indeed,
$$
\alpha^{-2} \mathcal E_\epsilon[u_\epsilon] = \mathcal E_\epsilon[\psi_{x,\lambda}+g+r] + 2\, \mathcal E_\epsilon[\psi_{x,\lambda}+g+r,h] + \mathcal E_\epsilon[h] \,.
$$
Since $\mathcal E_\epsilon[v_1,v_2]\lesssim \|\nabla v_1\| \|\nabla v_2\|$ for all $v_1,v_2\in H^1_0(\Omega)$, we immediately conclude from \eqref{eq:hbound} that
$$
\mathcal E_\epsilon[h] = \mathcal O(\lambda^{-4})
\qquad
\mathcal E_\epsilon[g+r,h] = o(\lambda^{-5/2}) \,.
$$
Next, using \eqref{eq:hboundlapl1}, \eqref{sup-h-2} and \eqref{sup-h},
\begin{align*}
\int_\Omega \nabla\psi_{x,\lambda}\cdot\nabla h\,dy & = \int_\Omega \nabla PU_{x,\lambda}\cdot\nabla h\,dy + \mathcal O( \lambda^{-1/2} \| H_a(x,\cdot)-H_0(x,\cdot)\|_\infty \|\Delta h\|_1 ) \\
& = \int_\Omega \nabla PU_{x,\lambda}\cdot\nabla h\,dy + \mathcal O(\lambda^{-3}) \,.
\end{align*}
Moreover, by \eqref{eq:overlap} and \eqref{nabla-norm}, 
\begin{align*}
\int_\Omega \nabla PU_{x,\lambda} \cdot\nabla h\,dy
= \sum_{j=1}^3\delta_j \lambda^{-3} \int_\Omega \nabla PU_{x,\lambda}\cdot\nabla \partial_{x_j} PU_{x,\lambda}\,dy 
= \mathcal O(\lambda^{-3}) \,.
\end{align*}
Finally, by \eqref{eq:psibound65} and \eqref{eq:hbound},
$$
\left| \int_\Omega (a+\epsilon V)\psi_{x,\lambda} h\,dy \right| \leq \|a+\epsilon V\|_\infty \|\psi_{x,\lambda}\|_{6/5} \|h\|_6 = \mathcal O(\lambda^{-5/2})
$$
This proves \eqref{eq:expfinaln1}.

\emph{Step 2.} We now extract the relevant contribution from $g$ and show
\begin{equation}
\label{eq:expfinaln2}
\mathcal E_\epsilon[\psi_{x,\lambda}+g+r] = \mathcal E_\epsilon[\psi_{x,\lambda}+r] + 2\, \mathcal E_0[\psi_{x,\lambda},g] + \int_\Omega |\nabla g|^2\,dy + o(\lambda^{-2}) \,.
\end{equation}

Indeed,
$$
\mathcal E_\epsilon[\psi_{x,\lambda}+g+r] = \mathcal E_\epsilon[\psi_{x,\lambda}+r] + 2\,\mathcal E_\epsilon[\psi_{x,\lambda}+r,g] + \mathcal E_\epsilon[g] \,.
$$
By Lemma \ref{lem-bg}, \eqref{eq:zm65}, \eqref{eq:zm65der} and \eqref{eq:hbound},
\begin{align*}
\left| \int_\Omega (a+\epsilon V) (2rg +g^2)\,dy \right| & \leq \|a+\epsilon V\|_\infty \|g\|_{6/5} (2\|r\|_6+ \|g\|_6) \\
& \lesssim \left( |\beta| \lambda^{-1} \|PU_{x,\lambda}\|_{6/5} + |\gamma| \|\partial_\lambda PU_{x,\lambda}\|_{6/5} \right) (\|r\|_6 + \|g\|_6) = o(\lambda^{-2}) \,.
\end{align*}
We have, since $r\in T_{x,\lambda}^\bot$ and $g\in T_{x,\lambda}$,
$$
\int_\Omega \nabla r\cdot\nabla g \,dy = 0 \,.
$$
This proves \eqref{eq:expfinaln2}.

\emph{Step 3.} We finally extract the relevant contribution from $r$ and show
\begin{equation}
\label{eq:expfinaln3}
\mathcal E_\epsilon[\psi_{x,\lambda}+r] = \mathcal E_\epsilon[\psi_{x,\lambda}] + \mathcal E_0[r] + o(\lambda^{-2}) + o(\epsilon\lambda^{-1}) \,.
\end{equation}

Indeed,
$$
\mathcal E_\epsilon[\psi_{x,\lambda}+r] = \mathcal E_\epsilon[\psi_{x,\lambda}] + 2\mathcal E_\epsilon[\psi_{x,\lambda},r] + \mathcal E_\epsilon[r] \,.
$$
Using $r\in T_{x,\lambda}^\bot$, the harmonicity of $H_0$ and equation \eqref{Ha-pde} for $H_a$, we find
$$
\int_\Omega \nabla\psi_{x,\lambda}\cdot\nabla r\,dy = - \lambda^{-1/2} \int_\Omega \nabla_y (H_a(x,y)-H_0(x,y))\cdot\nabla r\,dy = - \lambda^{-1/2} \int_\Omega a G_a(x,y)r\,dy \,.
$$
On the other hand, by \eqref{PU exp}, \eqref{sup-f} and \eqref{eq-d},
$$
\int_\Omega a\psi_{x,\lambda} r\,dy - \int_\Omega aU_{x,\lambda}r\,dy + \lambda^{-1/2} \int_\Omega aH_a(x,y)r\,dy = \mathcal O(\|a\|_{6/5} \|f_{x,\lambda}\|_\infty \|r\|_6 ) = o(\lambda^{-3}) \,.
$$
Thus,
$$
\mathcal E_0[\psi_{x,\lambda},r] = \int_\Omega a \left( U_{x,\lambda} - \lambda^{-1/2} H_a(x,y) - \lambda^{-1/2} G_a(x,y) \right) r\,dy + o(\lambda^{-3}) \,.
$$
By Lemma \ref{lem-V},
\begin{align*}
& \left| \int_\Omega a \left( U_{x,\lambda} - \lambda^{-1/2} H_a(x,y) - \lambda^{-1/2} G_a(x,y) \right) r\,dy \right| \\
& \quad \leq \|a\|_\infty \| U_{x,\lambda} - \lambda^{-1/2} H_a(x,\cdot) - \lambda^{-1/2} G_a(x,\cdot) \|_{6/5} \|r\|_6 = o(\lambda^{-5/2}) \,.
\end{align*}
Finally, by \eqref{eq:hbound} and \eqref{eq:psibound65},
$$
\left| \int_\Omega V\psi_{x,\lambda} r\,dy \right| \leq \|V\|_\infty \|\psi_{x,\lambda}\|_{6/5}\|r\|_6 = o(\lambda^{-1})
$$
and
$$
\left| \int_\Omega V r^2\,dy \right| \leq \|V\|_{3/2} \|r\|_6^2 = o(\lambda^{-1}) \,.
$$
This proves \eqref{eq:expfinaln3}.

The lemma follows by collecting the estimates from the three steps.
\end{proof}


\subsubsection*{Expanding the denominator}
\label{ssec expanding denominator}

Recall that $D_0$ and $\mathcal I[r]$ were defined in \eqref{eq:defnd} and \eqref{eq:defi} respectively. We shall show

\begin{lem}\label{expfinald}
As $\epsilon\to 0$,
$$
\alpha^{-6} \int_\Omega u_\epsilon^6\,dy = D_0 + D_1 + \mathcal I[r] + o(\lambda^{-2})\, ,
$$
where
$$
D_1 := 6 \int_\Omega \psi_{x,\lambda}^5 g\,dy + 15 \int_\Omega \psi_{x,\lambda}^4 g^2\,dy \,.
$$
\end{lem}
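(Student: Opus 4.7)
I expand $\alpha^{-6}\int_\Omega u_\epsilon^6\,dy = \int_\Omega(\psi_{x,\lambda}+g+h+r)^6\,dy$ by the multinomial theorem and aim to show that, modulo $o(\lambda^{-2})$, only the terms appearing in $D_0+D_1+\mathcal I[r]$ survive. The a priori bounds $\|g\|_6=\mathcal O(\lambda^{-1})$, $\|h\|_6=\mathcal O(\lambda^{-2})$ and $\|r\|_6=o(\lambda^{-1/2})$ from \eqref{eq:hbound} make most of the estimates a routine H\"older exercise.

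First, every term containing $h$ is bounded by placing $h$ in $L^6$; the single exception is $6\int\psi_{x,\lambda}^5 h$, which is only $\mathcal O(\lambda^{-2})$ by this procedure. It is in fact $\mathcal O(\lambda^{-3})$, handled exactly as in Step 1 of Lemma \ref{expfinaln}: expand $\psi_{x,\lambda}^5 = U_{x,\lambda}^5 + \mathcal O(U_{x,\lambda}^4\phi)$ with $\phi := \lambda^{-1/2}|H_a(x,\cdot)|+|f_{x,\lambda}|$, rewrite $3\int U_{x,\lambda}^5 h = \int\nabla U_{x,\lambda}\cdot\nabla h$ via $-\Delta U_{x,\lambda}=3U_{x,\lambda}^5$, and invoke \eqref{eq:overlap} and \eqref{nabla-norm}. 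The problem is thereby reduced to expanding $\int(\psi_{x,\lambda}+g+r)^6\,dy = \sum_{k=0}^6\binom{6}{k}\int(\psi_{x,\lambda}+g)^{6-k}r^k\,dy$.

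The terms with $k\geq 4$ are $o(\lambda^{-2})$ by the estimate $\|\psi_{x,\lambda}+g\|_6^{6-k}\|r\|_6^k$, while the $k=0$ term gives $(\psi_{x,\lambda}+g)^6 = \psi_{x,\lambda}^6 + 6\psi_{x,\lambda}^5 g + 15\psi_{x,\lambda}^4 g^2 + o(\lambda^{-2}) = D_0+D_1+o(\lambda^{-2})$, since the remaining $\int\psi_{x,\lambda}^{6-j}g^j$ with $j\geq 3$ are $\mathcal O(\lambda^{-j})$. For $k=2$ and $k=3$, the pure pieces $15\int\psi_{x,\lambda}^4 r^2$ and $20\int\psi_{x,\lambda}^3 r^3$ become the second and third summands of $\mathcal I[r]$ upon replacing $\psi_{x,\lambda}$ by $U_{x,\lambda}$; the corrections are integrals $\int U_{x,\lambda}^{n-1}\phi\,r^m$ which are $o(\lambda^{-2})$ via the sharp scaling $\|U_{x,\lambda}\|_p^p\sim\lambda^{p/2-3}$ for $p>3$ together with $\|\phi\|_\infty=\mathcal O(\lambda^{-1/2})$. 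Cross terms in $g$ at these levels carry $\|g\|_6^b\|r\|_6^c$ with $b+c\geq 3$ and $b\geq 1$, and are therefore harmless.

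The delicate case is $k=1$, where H\"older alone yields only $o(\lambda^{-3/2})$ rather than the needed $o(\lambda^{-2})$. For $6\int\psi_{x,\lambda}^5 r$ the remedy is the cancellation $\int U_{x,\lambda}^5 r\,dy = 0$: from $3\int U_{x,\lambda}^5 r = \int\nabla U_{x,\lambda}\cdot\nabla r$ and the decomposition $U_{x,\lambda} = PU_{x,\lambda}+\lambda^{-1/2}H_0(x,\cdot)+f_{x,\lambda}$, the first piece is killed by $r\in T_{x,\lambda}^\perp$ and the last two by the harmonicity of $H_0(x,\cdot)$ and of $f_{x,\lambda}$ (harmonic because $\Delta PU_{x,\lambda}=\Delta U_{x,\lambda}$ by \eqref{eq-pu}). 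Expanding $\psi_{x,\lambda}^5 = U_{x,\lambda}^5 - 5U_{x,\lambda}^4(\lambda^{-1/2}H_a(x,\cdot)+f_{x,\lambda}) + \mathcal O(U_{x,\lambda}^3\phi^2)$ then produces the first summand of $\mathcal I[r]$ modulo $o(\lambda^{-2})$. The cross term $30\int\psi_{x,\lambda}^4 g r$, with $g=\beta\lambda^{-1}PU_{x,\lambda}+\gamma\partial_\lambda PU_{x,\lambda}$, requires the twin identity $\int U_{x,\lambda}^4\partial_\lambda U_{x,\lambda}\,r\,dy = 0$, which follows by differentiating $-\Delta U_{x,\lambda}=3U_{x,\lambda}^5$ in $\lambda$ to obtain $-\Delta\partial_\lambda U_{x,\lambda}=15U_{x,\lambda}^4\partial_\lambda U_{x,\lambda}$ and repeating the previous argument with the harmonic functions $-\tfrac12\lambda^{-3/2}H_0(x,\cdot)$ and $\partial_\lambda f_{x,\lambda}$; combined with $\int U_{x,\lambda}^5 r = 0$, this forces $30\int\psi_{x,\lambda}^4 g r = o(\lambda^{-2})$. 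The main obstacle is precisely this twin-cancellation step: H\"older misses the target by a factor of $\lambda^{1/2}$, and recovery hinges on exploiting both kernel directions $U_{x,\lambda}$ and $\partial_\lambda U_{x,\lambda}$ of the linearized Sobolev operator against the orthogonality $r\in T_{x,\lambda}^\perp$. That $\beta$, $\gamma$ and $\delta_j$ end up entering only the remainder $o(\lambda^{-2})+o(\epsilon\lambda^{-1})$, as emphasized right after the statement of the lemma, is exactly the content of these cancellations.
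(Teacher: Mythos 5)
Your proposal is correct and follows essentially the same route as the paper: peel off $h$ using the special estimate $\int U_{x,\lambda}^5 h = \mathcal O(\lambda^{-3})$, then exploit the orthogonality $r\in T_{x,\lambda}^\perp$ through the twin cancellations $\int U_{x,\lambda}^5 r = 0$ and $\int U_{x,\lambda}^4\partial_\lambda U_{x,\lambda}\,r = 0$ to kill the linear-in-$r$ cross terms with $g$, and replace $\psi_{x,\lambda}$ by $U_{x,\lambda}$ in the $r$-terms to extract $\mathcal I[r]$. The only cosmetic difference is that you organize the multinomial expansion by powers of $r$ and derive $\int U_{x,\lambda}^5 r=0$ by decomposing $U_{x,\lambda}=PU_{x,\lambda}+\lambda^{-1/2}H_0+f_{x,\lambda}$ into harmonic pieces, whereas the paper peels off $h$, then $g$, then $r$ in three steps and passes directly through $\int\nabla PU_{x,\lambda}\cdot\nabla r = 0$; the content is the same.
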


\begin{proof}
\emph{Step 1.} We show that the contribution from $h$ to $\alpha^{-6} \int_\Omega u_\epsilon^6\,dy$ is negligible, that is,
\begin{equation}
\label{eq:expfinald1}
\alpha^{-6} \int_\Omega u_\epsilon^6\,dy = \int_\Omega (\psi_{x,\lambda}+g+r)^6\,dy + o(\lambda^{-2}) \,.
\end{equation}

Indeed,
$$
\alpha^{-6} \int_\Omega u_\epsilon^6\,dy = \int_\Omega (\psi_{x,\lambda}+g+r)^6\,dy + 6 \int_\Omega (\psi_{x,\lambda}+g+r)^5 h \,dy + \mathcal O \left( \| \psi_{x,\lambda}+g+r \|_6^4 \|h\|_6^2 + \|h\|_6^6\right)
$$
and by \eqref{eq:hbound} the last term is $\mathcal O(\lambda^{-4})$. The middle term is
$$
\int_\Omega (\psi_{x,\lambda}+g+r)^5 h \,dy = \int_\Omega \psi_{x,\lambda}^5 h \,dy + \mathcal O \left( \| \psi_{x,\lambda}\|_6^4 \|g+r\|_6 \|h\|_6 + \|g+r\|_6^5 \|h\|_6 \right)
$$
and again by \eqref{eq:hbound} the last term here is $o(\lambda^{-5/2})$. The first term here is
$$
\int_\Omega \psi_{x,\lambda}^5 h \,dy = \int_\Omega U_{x,\lambda}^5 h \,dy + \mathcal O\left(\| U_{x,\lambda}\|_6^4 \|\psi_{x,\lambda}-U_{\lambda,x}\|_6 \|h\|_6 + \|\psi_{x,\lambda}-U_{x,\lambda}\|_6^5\|h\|_6 \right),
$$
which, by \eqref{eq:hbound} and \eqref{eq:psiuinfty}, is $\mathcal O(\lambda^{-5/2})$. Finally, by \eqref{eq:overlap} and \eqref{nabla-norm},
$$
\int_\Omega U_{x,\lambda}^5 h \,dy = 3^{-1} \int_\Omega \nabla PU_{x,\lambda}\cdot \nabla h\,dy = \sum_{j=1}^3 \delta_j \lambda^{-3} \int_\Omega \nabla PU_{x,\lambda}\cdot \nabla \partial_{x_j} PU_{x,\lambda}\,dy = \mathcal O(\lambda^{-3}) \,.
$$
This proves \eqref{eq:expfinald1}.

\emph{Step 2.} We now extract the relevant contribution from $g$ and show
\begin{equation}
\label{eq:expfinald2}
\int_\Omega (\psi_{x,\lambda}+g+r)^6\,dy = \int_\Omega (\psi_{x,\lambda}+r)^6\,dy + 6 \int_\Omega \psi_{x,\lambda}^5 g\,dy + 15\int_\Omega \psi_{x,\lambda}^4 g^2\,dy + o(\lambda^{-2}) \,.
\end{equation}

Indeed,
\begin{align*}
\int_\Omega (\psi_{x,\lambda}+g+r)^6\,dy & = \int_\Omega (\psi_{x,\lambda}+r)^6\,dy + 6 \int_\Omega (\psi_{x,\lambda}+r)^5 g \,dy + 15 \int_\Omega (\psi_{x,\lambda}+r)^4 g^2\,dy \\
& \quad + \mathcal O\left(\|\psi_{x,\lambda}+r\|_6^3 \|g\|_6^3 + \|g\|_6^6 \right)
\end{align*}
and by \eqref{eq:hbound} the last term is $\mathcal O(\lambda^{-3})$. We need to show that the contribution from $r$ to the second and third term on the right side is negligible. The third term is
$$
\int_\Omega (\psi_{x,\lambda}+r)^4 g^2\,dy = \int_\Omega \psi_{x,\lambda}^4 g^2\,dy + \mathcal O\left( \|\psi_{x,\lambda}\|_6^3 \|r\|_6 \|g\|_6^2 + \|r\|_6^4\|g\|_6^2 \right)
$$
and by \eqref{eq:hbound} the last term is $o(\lambda^{-5/2})$. The second term above is
$$
\int_\Omega (\psi_{x,\lambda}+r)^5 g \,dy = \int_\Omega \psi_{x,\lambda}^5 g \,dy + 5 \int_\Omega \psi_{x,\lambda}^4 r g \,dy + \mathcal O\left( \|\psi_{x,\lambda}\|_6^3 \|r\|_6^2 \|g\|_6 + \|r\|_6^5\|g\|_6^2 \right)
$$
and by \eqref{eq:hbound} the last term is $o(\lambda^{-2})$. Let us show that the second term on the right side of the previous equation is negligible. We have
$$
\int_\Omega \psi_{x,\lambda}^4 r g \,dy = \int_\Omega U_{x,\lambda}^4 r g \,dy + \mathcal O\left( \|U_{x,\lambda}\|_6^3 \|\psi_{x,\lambda}-U_{x,\lambda}\|_6 \|r\|_6\|g\|_6 + \|\psi_{x,\lambda}-U_{x,\lambda}\|_6^4 \|r\|_6 \|g\|_6 \right)
$$
and by \eqref{eq:hbound} and \eqref{eq:psiuinfty} the last term is $o(\lambda^{-2})$. Now
\begin{align*}
\int_\Omega U_{x,\lambda}^4 r g \,dy & = \beta \lambda^{-1} \int_\Omega U_{x,\lambda}^4 PU_{x,\lambda} r\,dy + \gamma \int_\Omega U_{x,\lambda}^4 \partial_\lambda PU_{x,\lambda} r\,dy \\
& = \beta \lambda^{-1} \int_\Omega U_{x,\lambda}^5 r\,dy + \gamma \int_\Omega U_{x,\lambda}^4 \partial_\lambda U_{x,\lambda} r\,dy \\
& \quad + \mathcal O( (|\beta| \lambda^{-1} \| PU_{x,\lambda} - U_{x,\lambda}\|_6 + |\gamma|\|\partial_\lambda PU_{x,\lambda} - \partial_\lambda U_{x,\lambda}\|_6) \|U_{x,\lambda}\|_6^4 \|r\|_6 ) \,.
\end{align*}
By Lemma \ref{lem-bg}, \cite[Prop.\! 1 (c)]{rey2} and \eqref{eq:hbound}, the last term is $o(\lambda^{-2})$. Finally, by \eqref{eq:laplzeromodes1} and the fact that $r\in T_{x,\lambda}^\bot$,
$$
\int_\Omega U_{x,\lambda}^5 r\,dy = 3^{-1} \int_\Omega \nabla PU_{x,\lambda}\cdot\nabla r\,dy = 0 \,,
\quad
\int_\Omega U_{x,\lambda}^4 \partial_\lambda U_{x,\lambda} r\,dy = (15)^{-1} \int_\Omega \nabla \partial_\lambda PU_{x,\lambda}\cdot\nabla r\,dy = 0 \,.
$$
This proves \eqref{eq:expfinald2}.

\emph{Step 3.} We finally extract the relevant contribution from $r$ and show
\begin{equation}
\label{eq:expfinald3}
\int_\Omega (\psi_{x,\lambda}+r)^6\,dy = \int_\Omega \psi_{x,\lambda}^6\,dy + \mathcal I[r] + o(\lambda^{-2}) \,.
\end{equation} 

Indeed,
\begin{align*}
\int_\Omega (\psi_{x,\lambda}+r)^6\,dy & = \int_\Omega \psi_{x,\lambda}^6\,dy + 6 \int_\Omega \psi_{x,\lambda}^5 r\,dy + 15 \int_\Omega \psi_{x,\lambda}^4 r^2\,dy + 20 \int_\Omega \psi_{x,\lambda}^3 r^3 \,dy \\
& \quad + \mathcal O\left( \|\psi_{x,\lambda}\|_6^2 \|r\|_6^4 + \|r\|_6^6 \right)
\end{align*}
and by \eqref{eq:hbound} the last term is $o(\lambda^{-2})$. We need to extract $\mathcal I[r]$ from the three terms on the right side involving $r$. We begin with the term which is linear in $r$,
\begin{align*}
\int_\Omega \psi_{x,\lambda}^5 r\,dy & = \int_\Omega U_{x,\lambda}^5 r\,dy + 5 \int_\Omega U_{x,\lambda}^4(\psi_{x,\lambda}-U_{x,\lambda})r\,dy \\
& \quad + \mathcal O \left( \|U_{x,\lambda}\|_{18/5}^3 \|\psi_{x,\lambda}-U_{x,\lambda}\|_\infty \|r\|_6 + \| \psi_{x,\lambda}-U_{x,\lambda}\|_6^5 \|r\|_6 \right).
\end{align*}
By \eqref{eq:psiuinfty}, \eqref{eq:hbound} and $\|U_{x,\lambda}\|_{18/5}^3 = \mathcal O(\lambda^{-1})$, the last term is $o(\lambda^{-2})$. Since $r\in T_{x,\lambda}^\bot$, the first term is
$$
\int_\Omega U_{x,\lambda}^5 r\,dy = 3^{-1} \int_\Omega \nabla PU_{x,\lambda}\cdot\nabla r\,dy = 0 \,.
$$
Writing $\psi_{x,\lambda}- U_{x,\lambda}= - \lambda^{-1/2} H_a(x,\cdot) - f_{x,\lambda}$, we have
\begin{align*}
& \int_\Omega U_{x,\lambda}^4 (\psi_{x,\lambda}-U_{x,\lambda})r\,dy = - \lambda^{-1/2} \int_\Omega U_{x,\lambda}^4 H_a(x,y)r\,dy + \mathcal O(\| U_{x,\lambda}\|_{24/5}^4 \|f_{x,\lambda}\|_\infty \|r\|_6 ) \,.
\end{align*}
By \eqref{sup-f}, \eqref{eq-d}, \eqref{eq:hbound} and $\|U_{x,\lambda}\|_{24/5}^4=\mathcal O(\lambda^{-1/2})$, the last term on the right side is $o(\lambda^{-2})$.

We now turn to the terms that are quadratic in $r$. We have
$$
\int_\Omega \psi_{x,\lambda}^4 r^2\,dy = \int_\Omega U_{x,\lambda}^4 r^2\,dy + \mathcal O\left( \|U_{x,\lambda}\|_{9/2}^3 \|\psi_{x,\lambda}-U_{x,\lambda}\|_\infty \|r\|_6^2 + \|\psi_{x,\lambda}-U_{x,\lambda}\|_6^4 \|r\|_6^2 \right)
$$
and by \eqref{eq:psiuinfty}, \eqref{eq:hbound} and $\|U_{x,\lambda}\|_{9/2}^3 = \mathcal O(\lambda^{-1/2})$, the last term on the right side is $o(\lambda^{-2})$. Similarly, one shows that
$$
\int_\Omega \psi_{x,\lambda}^3 r^3\,dy = \int_\Omega U_{x,\lambda}^3 r^3\,dy + o(\lambda^{-2}) \,.
$$
This proves \eqref{eq:expfinald3}.

The lemma follows by collecting the estimates from the three steps.
\end{proof}


\begin{proof}[Proof of Lemma \ref{expfinalq}]
Note that, by \eqref{eq:hbound}, $D_1=\mathcal O(\lambda^{-1})$ and $\mathcal I[r]=o(\lambda^{-1})$. Moreover, by \eqref{eq:upperd}, $D_0$ stays away from zero. Therefore, the expansion from Lemma \ref{expfinald} implies that
$$
\left( \alpha^{-6} \int_\Omega u_\epsilon^6\,dy \right)^{-1/3}
= D_0^{-1/3} \left( 1 - \frac13 \frac{D_1}{D_0} - \frac13 \frac{\mathcal I[r]}{D_0} + \frac{2}{9} \frac{D_1^2}{D_0^2} + o(\lambda^{-2}) \right).
$$
Combining this with the expansion from Lemma \ref{expfinaln} and using $N_1=\mathcal O(\lambda^{-1})$ (again from \eqref{eq:hbound}), we obtain
\begin{align*}
\mathcal S_{a+\epsilon V}[u_\epsilon] & = \mathcal S_{a+\epsilon V}[\psi_{x,\lambda}] + A + D_0^{-1/3}\left( \mathcal E_0[r] - \frac{N_0}{3\,D_0}\mathcal I[r] \right)
+ o(\lambda^{-2}) + o(\epsilon \lambda^{-1})
\end{align*}
with
$$
A = D_0^{-1/3} \left( N_1 - \frac{D_1}{3\,D_0} N_1 - \frac{D_1}{3D_0} N_0 + \frac{2}{9} \frac{D_1^2}{D_0^2} N_0 \right).
$$
Thus, the assertion of the lemma is equivalent to $A=o(\lambda^{-2})+o(\epsilon\lambda^{-1})$. We write
\begin{align*}
A = D_0^{-1/3} \left( \left(N_1- D_1 \right)\left(1 - \frac{D_1}{3\,D_0} \right) + \frac{1}{3} \frac{D_1^2}{D_0} + \left( 1 - \frac{N_0}{3D_0} \right) D_1 \left(1 - \frac{2\,D_1}{3\,D_0} \right) \right).
\end{align*}
It follows from \eqref{eq:uppern} and \eqref{eq:upperd} that
\begin{equation}
\label{eq:nodo}
\frac{N_0}{3\,D_0} = 1 + \mathcal O(\lambda^{-2}) + \mathcal O(\epsilon\lambda^{-1}) \,.
\end{equation}
This, together with $D_1=\mathcal O(\lambda^{-1})$, yields
$$
A = D_0^{-1/3} \left( \left(N_1- D_1 \right)\left(1 - \frac{D_1}{3\,D_0} \right) + \frac{1}{3} \frac{D_1^2}{D_0} \right) + o(\lambda^{-2}).
$$

We shall show in Appendix \ref{sec-app-a} that
\begin{equation}
\label{eq:n1}
N_1 = \frac{3\pi^2}{2}\beta\, \lambda^{-1} + \left( \frac{3\pi^2}{4}\beta^2 + \frac{15\,\pi^2}{64} \gamma^2 - 8\pi\,\phi_0(x)\,\beta + 4\pi\,\phi_0(x)\,\gamma\right)\lambda^{-2} + o(\lambda^{-2})
\end{equation}
and
\begin{equation}
\label{eq:d1}
D_1 = \frac{3\pi^2}{2}\beta \lambda^{-1} + \left( \frac{15\,\pi^2}{4} \beta^2 + \frac{15\,\pi^2}{64} \gamma^2 - 8\pi\,\phi_0(x)\,\beta +4\pi\,\phi_0(x)\,\gamma \right)\lambda^{-2} + o(\lambda^{-2}) \,.
\end{equation}
Thus, in particular,
$$
N_1 -D_1 = -3\pi^2 \beta^2 \lambda^{-2} + o(\lambda^{-2})
\qquad\text{and}\qquad
D_1^2 = \left( \frac{3\pi^2}{2} \right)^2 \beta^2\lambda^{-2} + o(\lambda^{-2}) \,.
$$
This, together with $D_0 = (S/3)^{3/2} + o(\lambda^{-1})$ (from \eqref{eq:upperd}), implies $A=o(\lambda^{-2})$, as claimed.
\end{proof}

Before continuing with the main line of the argument, let us expand $\alpha$. By the normalization \eqref{appr-min}, Lemma \ref{expfinald}, \eqref{eq:upperd} and \eqref{eq:d1}
\begin{align}\label{eq:expalpha}
\alpha^{-6} (S/3)^{3/2} & = (S/3)^{3/2} + \frac{3\pi^2}{2}\,\beta\,\lambda^{-1} - 8\pi\,\phi_a(x)\,\lambda^{-1} \notag \\
& \quad  + \left(8\pi \,a(x) + \frac{15\,\pi^2}{4} \beta^2 + \frac{15\,\pi^2}{64} \gamma^2 - 8\pi\,\phi_0(x)\,\beta +4\pi\,\phi_0(x)\,\gamma \right)\lambda^{-2} +
\mathcal I[r] + o(\lambda^{-2}) \,.
\end{align}


\subsection{Coercivity}

To complete the proof of our main results, it remains to prove that the terms involving $r$ in the expansion \eqref{eq:expfinalq} give a non-negative contribution. Recall that $\mathcal I[r]$ was defined in \eqref{eq:defi} and $N_0$ and $D_0$ in Lemmas \ref{expfinaln} and \ref{expfinald}, respectively.

\begin{lem} \label{lem-g}
There is a $\rho>0$ such that for all sufficiently small $\epsilon>0$,
$$
\mathcal E_0[r] - \frac{N_0}{3\,D_0}\mathcal I[r] \geq \rho \int_\Omega |\nabla r|^2\,dy + o(\lambda^{-2}) \,.
$$
\end{lem}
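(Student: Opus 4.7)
The plan is to reduce the inequality to the coercivity estimate of Lemma \ref{coercivity} by carefully estimating the three terms of $\mathcal{I}[r]$. Substitute $\eta := N_0/(3D_0)$, which by \eqref{eq:nodo} satisfies $\eta = 1+o(1)$, and expand
\begin{align*}
\mathcal E_0[r] - \eta\, \mathcal I[r]
&= \int_\Omega \bigl(|\nabla r|^2 + a r^2 - 15\, U_{x,\lambda}^4 r^2\bigr)\,dy \\
&\quad + 15(1-\eta)\int_\Omega U_{x,\lambda}^4 r^2\,dy
+ 30\eta\lambda^{-1/2}\int_\Omega U_{x,\lambda}^4 H_a(x,y)\,r\,dy
- 20\eta\int_\Omega U_{x,\lambda}^3 r^3\,dy.
\end{align*}
Since $r\in T_{x,\lambda}^\bot$ and $\lambda d\to\infty$ by Proposition \ref{wbound}, Lemma \ref{coercivity} bounds the first line from below by $\rho\int_\Omega|\nabla r|^2\,dy$.

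The quadratic correction $15(1-\eta)\int U_{x,\lambda}^4 r^2\,dy$ is controlled using $\int U_{x,\lambda}^4 r^2\,dy \leq \|U_{x,\lambda}\|_6^4\|r\|_6^2 \lesssim \|\nabla r\|^2$, so it is $o(\|\nabla r\|^2)$ and can be absorbed. For the cubic, Hölder gives $|\int U_{x,\lambda}^3 r^3\,dy| \leq \|U_{x,\lambda}\|_6^3\|r\|_6^3 \lesssim \|r\|_6 \|\nabla r\|^2 = o(\|\nabla r\|^2)$, again absorbable.

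The main point is the linear term. Applying Cauchy--Schwarz with a small parameter $\delta>0$,
\begin{equation*}
30\eta\lambda^{-1/2}\left|\int_\Omega U_{x,\lambda}^4 H_a(x,y)\,r\,dy\right|
\leq \delta\int_\Omega U_{x,\lambda}^4 r^2\,dy + \frac{C_\delta}{\lambda}\int_\Omega U_{x,\lambda}^4 H_a(x,y)^2\,dy.
\end{equation*}
By Lemma \ref{lem-uh2}, $\int_\Omega U_{x,\lambda}^4 H_a(x,\cdot)^2\,dy = \pi^2\phi_a(x)^2\lambda^{-1} + o(\lambda^{-1})$, and the crucial input $\phi_a(x)=o(1)$ from Proposition \ref{thm-q} upgrades this to $o(\lambda^{-1})$, so the second term on the right is $o(\lambda^{-2})$ (not merely $\mathcal O(\lambda^{-2})$). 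The first term is again bounded by $C\delta\|\nabla r\|^2$ and can be absorbed.

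Choosing $\delta$ sufficiently small, all the $o(\|\nabla r\|^2)$ and $C\delta\|\nabla r\|^2$ contributions can be absorbed into the coercivity term, leaving $\rho'\|\nabla r\|^2+o(\lambda^{-2})$ for some $\rho'>0$, which is the claimed bound. The only nontrivial ingredients beyond the coercivity of Lemma \ref{coercivity} are (i) the vanishing $\phi_a(x)=o(1)$ from Proposition \ref{thm-q}, which provides the crucial extra smallness in the linear term, and (ii) Lemma \ref{lem-uh2} to identify the leading coefficient of $\int U_{x,\lambda}^4 H_a^2$.
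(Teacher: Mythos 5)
Your proof is correct and follows essentially the same strategy as the paper: isolate the coercive form $\int(|\nabla r|^2+ar^2-15U_{x,\lambda}^4 r^2)$, apply Lemma \ref{coercivity}, and control the cross/correction terms from $\mathcal I[r]$ by Cauchy--Schwarz combined with Lemma \ref{lem-uh2} and the crucial input $\phi_a(x)=o(1)$ from Proposition \ref{thm-q}, absorbing small multiples of $\int U_{x,\lambda}^4 r^2$ into the coercivity via \eqref{hs-ineq} by first fixing $\delta$ small and then sending $\epsilon\to 0$. The only cosmetic differences are that you factor out $\eta=N_0/(3D_0)$ explicitly and use a slightly different H\"older splitting $\|U_{x,\lambda}\|_6^3\|r\|_6^3$ for the cubic term where the paper uses $(\int U_{x,\lambda}^4 r^2)^{3/4}(\int r^6)^{1/4}$; both yield the required $o(\lambda^{-2})$ plus an absorbable $o(\|\nabla r\|^2)$ contribution.
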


\begin{proof}
We bound, using \eqref{eq-d}, Lemma \ref{lem-uh2} and \eqref{eq:concpoint}, for any $\delta>0$,
\begin{align*}
\left| 30\,\lambda^{-1/2} \int_\Omega U_{x,\lambda}^4 H_a(x,y) r\,dy \right|
& \leq 30 \, \lambda^{-1/2} \left(  \int_\Omega U_{x,\lambda}^4 r^2\,dy \right)^{\frac 12} \left( \int_\Omega U_{x, \lambda}^4 \, H_a(x,y)^2\,dy\right)^{\frac 12} \nonumber  \\
& \leq o(\lambda^{-1}) \left(  \int_\Omega U_{x,\lambda}^4 r^2\,dy \right)^{\frac 12}
\leq \delta \int_\Omega U_{x,\lambda}^4 \, r^2\,dy + \delta^{-1} o(\lambda^{-2}) \,.
\end{align*}
Similarly, using \eqref{eq:hbound},
\begin{align*}
\left| 20 \int_\Omega U_{x,\lambda}^3 \,  r^3\,dy  \right| &  \leq 20 \left(  \int_\Omega U_{x,\lambda}^4 \, r^2\,dy \right)^{\frac 34}  \left( \int_\Omega r^6 \,dy \right)^{\frac 14} \leq o(\lambda^{-\frac34 }) \left(  \int_\Omega U_{x,\lambda}^4 \, r^2\,dy \right)^{\frac 34} \\
& \leq \delta  \int_\Omega U_{x,\lambda}^4 \, r^2\,dy + \delta^{-3} \, o(\lambda^{-3}) \,.
\end{align*}
This, together with \eqref{eq:nodo} implies that
\begin{align*}
\mathcal E_0[r] - \frac{N_0}{3\,D_0}\mathcal I[r] & \geq \int_\Omega \left( |\nabla r|^2 + a r^2 - 15\, U_{x,\lambda}^4 r^2\right)dy \\
& \quad - \left( 2\delta + \mathcal O(\lambda^{-2}) + \mathcal O(\epsilon \lambda^{-1}) \right) \int_\Omega U_{x,\lambda}^4 \, r^2\,dy + \delta^{-1} o(\lambda^{-2}) + \delta ^{-3} o(\lambda^{-3}) \,.
\end{align*}
Since $r\in T_{x,\lambda}^\bot$, Lemma \ref{coercivity} implies that for all sufficiently small $\epsilon>0$, the first term on the right side is bounded from below by $\rho \int_\Omega |\nabla r|^2\,dy$ for some $\rho>0$ independent of $\epsilon$. On the other hand, by \eqref{hs-ineq}, choosing $\delta>0$ small, but independent of $\epsilon$, and then $\epsilon$ small, we can make sure that
$$
- \left( 2\delta + \mathcal O(\lambda^{-2}) + \mathcal O(\epsilon \lambda^{-1}) \right) \int_\Omega U_{x,\lambda}^4 \, r^2\,dy \geq - (\rho/2) \int_\Omega |\nabla r|^2\,dy \,.
$$
This completes the proof of the lemma.
\end{proof}


\subsection{Proof of the main results}

In this subsection we prove Theorems \ref{thm-main}, \ref{thm-main2} and \ref{thm-minimizers}. Combining the expansions from Lemma \ref{expfinalq} and Theorem \ref{exppsi} and using the fact that $\phi_a(x_0)=0$ (see Proposition \ref{thm-q}) we obtain
\begin{align*}
\mathcal S_{a+\epsilon V}[u_\epsilon] & \geq S + (S/3)^{-1/2} \left( \frac{\epsilon}{\lambda} Q_V(x_0) - \frac{2\pi^2 a(x_0)}{\lambda^2} \right) \notag \\
& \quad + (S/3)^{-1/2} 4\pi\, \phi_a(x)\, \lambda^{-1}  + (S/3)^{-1/2} \left( \mathcal E_0[r] - \frac{N_0}{3\, D_0} \mathcal I[r] \right) + o(\lambda^2) + o(\epsilon\lambda^{-1}) \,.
\end{align*}
Using the almost minimizing assumption \eqref{appr-min} as well as the coercivity bound from Lemma \ref{lem-g} we obtain
\begin{align}\label{eq:proofmain}
0 & \geq (1+o(1)) (S-S(a+\epsilon V)) + (S/3)^{-1/2} \left( \frac{\epsilon}{\lambda} Q_V(x_0) - \frac{2\pi^2 a(x_0)}{\lambda^2} \right) + \mathcal R + o(\lambda^2) + o(\epsilon\lambda^{-1}) \,.
\end{align}
with
\begin{equation}  \label{R}
\mathcal R := (S/3)^{-1/2} \left( 4\pi \phi_a(x) \lambda^{-1} + \rho \int_\Omega |\nabla r|^2\,dy \right).
\end{equation}
Note that, by Corollary \ref{cor-2}, $\mathcal R\geq 0$.


\begin{lem}
If $\mathcal N_a(V)\neq\emptyset$, then $x_0\in\mathcal N_a(V)$.
\end{lem}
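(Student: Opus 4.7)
My plan is to verify the two defining conditions of $\mathcal N_a(V)$ for $x_0$ in turn: first that $x_0\in\mathcal N_a$, and then that $Q_V(x_0)<0$. For the membership $x_0\in\mathcal N_a$, Proposition \ref{wbound} gives $d^{-1}=\mathcal O(1)$, so the distances from $x$ to $\partial\Omega$ are bounded away from $0$ along the subsequence, forcing the limit $x_0$ from Proposition \ref{prop-app-min} to lie in the open set $\Omega$. Combining this with $\phi_a(x)=o(1)$ from Proposition \ref{thm-q} and the continuity of $\phi_a$ on $\Omega$, I obtain $\phi_a(x_0)=0$, hence $x_0\in\mathcal N_a$; Assumption \ref{ass} then yields $a(x_0)<0$.

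For the property $Q_V(x_0)<0$ I argue by contradiction, supposing $Q_V(x_0)\ge 0$. The crucial auxiliary ingredient is a lower bound of the form $S-S(a+\epsilon V)\ge c_0\epsilon^2$ for some $c_0>0$, which is supplied by Corollary \ref{thm-upperb}: since $\mathcal N_a(V)\ne\emptyset$ and $a<0$ on $\mathcal N_a$, the supremum appearing there is a positive finite number. With $Q_V(x_0)\ge 0$ and $a(x_0)<0$, both summands in the bracket $\frac{\epsilon}{\lambda}Q_V(x_0)-\frac{2\pi^2 a(x_0)}{\lambda^2}$ of \eqref{eq:proofmain} are non-negative, and $\mathcal R\ge 0$ by Corollary \ref{cor-2}. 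Dropping $\mathcal R$ and $\frac{\epsilon}{\lambda}Q_V(x_0)\ge 0$ while keeping the strictly positive term $-\frac{2\pi^2 a(x_0)}{\lambda^2}=C_0\lambda^{-2}$, with $C_0:=(S/3)^{-1/2}\cdot 2\pi^2|a(x_0)|>0$, and using the lower bound above, \eqref{eq:proofmain} reduces to
\begin{equation*}
c_0\,\epsilon^2+\frac{C_0}{\lambda^2}\ \le\ o(\lambda^{-2})+o(\epsilon\lambda^{-1}).
\end{equation*}

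The main obstacle is that the error $o(\epsilon\lambda^{-1})$ is not \emph{a priori} comparable to either $\epsilon^2$ or $\lambda^{-2}$; I resolve this by splitting on the asymptotic behavior of $\epsilon\lambda$ along a further subsequence. If $\epsilon\lambda$ stays bounded, then $o(\epsilon\lambda^{-1})\cdot\lambda^2=o(\epsilon\lambda)=o(1)$, and multiplying the last displayed inequality by $\lambda^2$ gives $C_0\le o(1)$, contradicting $C_0>0$. If instead $\epsilon\lambda\to\infty$, dividing by $\epsilon\lambda^{-1}$ produces
\begin{equation*}
c_0\,\epsilon\lambda+\frac{C_0}{\epsilon\lambda}\ \le\ o((\epsilon\lambda)^{-1})+o(1)=o(1),
\end{equation*}
contradicting $c_0\,\epsilon\lambda\to\infty$. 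Either alternative is impossible, so $Q_V(x_0)<0$ and therefore $x_0\in\mathcal N_a(V)$.
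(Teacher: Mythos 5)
Your proof is correct and relies on the same key inputs as the paper's: the lower bound $S-S(a+\epsilon V)\gtrsim\epsilon^2$ from Corollary \ref{thm-upperb}, the inequality \eqref{eq:proofmain}, the positivity $\mathcal R\geq 0$, and $a(x_0)<0$ (via \eqref{eq:anegative}). The only presentational difference is the final step: where you argue by contradiction with a case split on the asymptotics of $\epsilon\lambda$, the paper applies the AM--GM inequality $C_1\epsilon^2+C_2\lambda^{-2}\geq 2\sqrt{C_1C_2}\,\epsilon\lambda^{-1}$ directly to the combined inequality, which avoids the dichotomy and gives $Q_V(x_0)<0$ in one line.
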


This is the only place in the proof of Theorem \ref{thm-main} where we need assumption \eqref{eq:anegative}.

\begin{proof}
We recall the upper bound from Corollary \ref{thm-upperb},
$$
S(a+\epsilon V) \leq S - (S/3)^{-1/2} \sup_{y\in\mathcal N_a(V)} \frac{Q_V(y)^2}{8\pi^2|a(y)|}\ \epsilon^2 + o(\epsilon^2) \,.
$$
Combining this with \eqref{eq:proofmain} and using $\mathcal R\geq 0$, we find
$$
C_1 \,\epsilon^2 + C_2 \,\lambda^{-2} \leq  \left( -(S/3)^{-1/2} Q_V(x_0) + o(1) \right) \frac{\epsilon}{\lambda}
$$
with
$$
C_1 := (S/3)^{-1/2} \sup_{y\in\mathcal N_a(V)} \frac{Q_V(y)^2}{8\pi^2|a(y)|} + o(1) \,,
\qquad
C_2 := (S/3)^{-1/2}\,  2\pi^2 |a(x_0)| + o(1) \,.
$$
By the assumptions $\mathcal N_a(V)\neq\emptyset$ and \eqref{eq:anegative}, both $C_1$ and $C_2$ tend to some positive quantities as $\epsilon\to 0$. Since $C_1\epsilon^2 + C_2\lambda^{-2}\geq 2\sqrt{C_1\,C_2} \, \epsilon\lambda^{-1}$ we obtain that $Q_V(x_0)<0$, as claimed.
\end{proof}

We now assume $\mathcal N_a(V)\neq\emptyset$ and complete the proof of Theorems \ref{thm-main} and \ref{thm-minimizers}. We can write
\begin{align*}
& (S/3)^{-1/2} \left( \frac{\epsilon}{\lambda} Q_V(x_0) - \frac{2\pi^2 a(x_0)}{\lambda^2} \right) + o(\lambda^2) + o(\epsilon\lambda^{-1}) = - (S/3)^{-1/2} \frac{\left( Q_V(x_0)+o(1) \right)^2}{4\left(2\pi^2|a(x_0)|+o(1)\right)} \ \epsilon^2 \\
& \qquad\qquad\qquad\qquad\qquad\qquad + (S/3)^{-1/2} \left( \frac{Q_V(x_0)+o(1)}{2\sqrt{2\pi^2 |a(x_0)|+o(1)}} \ \epsilon + \sqrt{2\pi^2|a(x_0)|+ o(1)}\ \lambda^{-1} \right)^2.
\end{align*}
Inserting this into \eqref{eq:proofmain} we obtain
\begin{equation}
\label{eq:proofmain2}
(S/3)^{-1/2} \frac{\left( Q_V(x_0)+o(1) \right)^2}{4\left(2\pi^2|a(x_0)|+o(1)\right)} \ \epsilon^2
\geq (1+ o(1)) \left( S-S(a+\epsilon V) \right) + \mathcal R'
\end{equation}
with
\begin{align} \label{R'}
\mathcal R' & := \mathcal R + (S/3)^{-1/2} \left( \frac{Q_V(x_0)+o(1)}{2\sqrt{2\pi^2 |a(x_0)|+o(1)}} \ \epsilon + \sqrt{2\pi^2|a(x_0)|+ o(1)}\ \lambda^{-1} \right)^2 \,.
\end{align}
Since $\mathcal R' \geq 0$ we obtain, in particular,
\begin{align}
\label{eq:proofmain3}
S-S(a+\epsilon V) & \leq (1+o(1)) (S/3)^{-1/2} \frac{\left( Q_V(x_0)+o(1) \right)^2}{4\left(2\pi^2|a(x_0)|+o(1)\right)} \ \epsilon^2 = (S/3)^{-1/2} \frac{Q_V(x_0)^2}{8\pi^2|a(x_0)|} \ \epsilon^2 + o(\epsilon^2) \notag \\
& \leq (S/3)^{-1/2} \sup_{y\in\mathcal N_a(V)} \frac{Q_V(y)^2}{8\pi^2|a(y)|}\,\epsilon^2 + o(\epsilon^2) \,.
\end{align}
In the last inequality we used $x_0\in\mathcal N_a(V)$. This proves the claimed lower bound on $S(a+\epsilon V)$ and completes the proof of Theorem \ref{thm-main}.

\medskip

We now proceed to the proof of Theorem \ref{thm-minimizers}, still under the assumption $\mathcal N_a(V)\neq\emptyset$. Combining the lower bound on $S-S(a+\epsilon V)$ from Corollary \ref{thm-upperb} with the upper bound in \eqref{eq:proofmain3} we obtain
$$
\frac{Q_V(x_0)^2}{|a(x_0)|} = \sup_{y\in\mathcal N_a(V)} \frac{Q_V(y)^2}{|a(y)|} \,.
$$
Moreover, inserting the lower bound on $S-S(a+\epsilon V)$ into \eqref{eq:proofmain2} we infer that $\mathcal R' = o(\epsilon^2)$. Thus, by \eqref{R} and \eqref{R'} 
$$
\|\nabla r\|^2 = o(\epsilon^2)
\qquad\text{and}\qquad
\lambda^{-1} = \frac{|Q_V(x_0)|}{4\pi^2\, |a(x_0)|}\, \epsilon + o(\epsilon) \,.
$$
and, reinserting the last expression into $\mathcal R = o(\epsilon^2)$, also
$$
\phi_a(x) = o(\epsilon) \,.
$$
Inserting these bounds into \eqref{eq:expalpha}, we obtain
\begin{align*}
\alpha^{-6} & = 1 + (S/3)^{-3/2}\, \frac{3\pi}{2}\,\beta\,\lambda^{-1} \\
& \quad + (S/3)^{-3/2} \Big( 8\pi\,a(x_0) + \frac{15 \pi^2}{4}\,\beta^2 + \frac{15\pi^2}{64}\,\gamma^2 - 8\pi\,\phi_0(x_0)\,\beta + 4\pi\,\phi_0(x_0)\,\gamma \Big) \lambda^{-2} + o(\epsilon^2)
\end{align*}
and therefore, using Lemma \ref{lem-bg}, $\alpha=1+ \mathcal O(\epsilon)$. This completes the proof of Theorem \ref{thm-minimizers}.

\medskip

We now assume $\mathcal N_a(V)=\emptyset$ and prove Theorem \ref{thm-main2}. Estimating $Q_V(x_0)\geq 0$ and $\mathcal R\geq 0$ in \eqref{eq:proofmain} we obtain
$$
0 \geq (1+o(1)) (S-S(a+\epsilon V)) + \left( (S/3)^{-1/2}\,  2\pi^2 |a(x_0)| + o(1) \right) \lambda^{-2} + o(\epsilon\lambda^{-1}) \,.
$$
Since $o(\epsilon \lambda^{-1}) \geq - \delta \lambda^{-2} + o(\epsilon^2)$ for any fixed $\delta$, this implies $S-S(a+\epsilon V)= o(\epsilon^2)$. 

Under the additional assumption $Q_V(x_0)>0$, we infer from \eqref{eq:proofmain} that
$$
0 \geq (1+o(1)) (S-S(a+\epsilon V)) + C_1 \epsilon\lambda^{-1} + C_2 \lambda^{-2}
$$
with
$$
C_1 := (S/3)^{-1/2} \, Q_V(x_0) + o(1)
\qquad\text{and}\qquad
C_2 := (S/3)^{-1/2}\,  2\pi^2 |a(x_0)| + o(1) \,.
$$
Since both $C_1$ and $C_2$ are positive for all sufficiently small $\epsilon>0$, we arrive at a contradiction. Thus, assumption \eqref{appr-min0}, under which we have worked so far, is not satisfied. By the concavity argument in the proof of Corollary \ref{thm-upperb} this means that $S(a+\epsilon V)=S$ for all sufficiently small $\epsilon>0$. This concludes the proof of Theorem \ref{thm-main2}.

\appendix
 
\section{Some computations}
\label{sec-app-a}

\subsection{Asymptotics and bounds}

We recall that we abbreviate $d=\mathrm{dist}(x,\partial\Omega)$.

\begin{lem}
As $\lambda\to\infty$, uniformly in $x\in\Omega$,
\begin{equation}\label{eq:nablapu}
\int_\Omega |\nabla P_{x,\lambda}|^2\,dy = 3^{-1/2} S^{3/2} - 4\pi\, \phi_0(x)\, \lambda^{-1} + o((\lambda d)^{-1}) \,,
\end{equation}
\begin{equation}
\label{eq:pu6}
\int_\Omega PU_{x,\lambda}^6\,dy = (S/3)^{3/2} - 8\pi\, \phi_0(x)\, \lambda^{-1} + o((\lambda d)^{-1}) \,.
\end{equation}
\end{lem}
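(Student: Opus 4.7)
The plan is to establish both expansions by substituting the decomposition \eqref{PU exp}, namely $PU_{x,\lambda} = U_{x,\lambda} - \lambda^{-1/2} H_0(x,\cdot) - f_{x,\lambda}$, and then controlling the contributions from $H_0$ and $f_{x,\lambda}$ using \eqref{sup-f} (the $L^\infty$ bound on $f_{x,\lambda}$), \eqref{sup-h} ($\|H_0(x,\cdot)\|_\infty = d^{-1}$), and the uniform-in-$x$ version of Lemma \ref{lem-uh}.

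For the first identity, I would first integrate by parts and use \eqref{el} together with \eqref{eq-pu} to obtain
\begin{equation*}
\int_\Omega |\nabla PU_{x,\lambda}|^2\,dy = \int_\Omega (-\Delta U_{x,\lambda})\, PU_{x,\lambda}\,dy = 3 \int_\Omega U_{x,\lambda}^5\, PU_{x,\lambda}\,dy.
\end{equation*}
Substituting the decomposition, the leading piece $3\int_\Omega U_{x,\lambda}^6\,dy$ equals $3\int_{\R^3} U_{x,\lambda}^6\,dy$ plus a tail over $\R^3 \setminus B_d(x)$ that is easily bounded by $\mathcal O((\lambda d)^{-3})$; this produces the constant $3^{-1/2} S^{3/2}$. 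The cross term $-3\lambda^{-1/2}\int_\Omega U_{x,\lambda}^5\, H_0(x,\cdot)\,dy$ is handled by the argument of Lemma \ref{lem-uh} applied to $b\equiv 0$, but redone keeping track of $d$-dependence (via the rescaling $y = x + z/\lambda$ together with the harmonicity and $L^\infty$ bound of $H_0(x,\cdot)$); this yields $-4\pi \phi_0(x) \lambda^{-1} + o((\lambda d)^{-1})$. Finally $3|\int_\Omega U_{x,\lambda}^5 f_{x,\lambda}\,dy| \leq 3 \|f_{x,\lambda}\|_\infty \|U_{x,\lambda}\|_5^5 = \mathcal O(\lambda^{-3} d^{-3}) = o((\lambda d)^{-1})$ whenever $\lambda d \to \infty$.

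For the second identity, expand
\begin{equation*}
PU_{x,\lambda}^6 = (U_{x,\lambda} - \lambda^{-1/2} H_0(x,\cdot))^6 + \mathcal R(f_{x,\lambda}),
\end{equation*}
where $\mathcal R(f_{x,\lambda})$ collects all terms containing at least one factor of $f_{x,\lambda}$. Using \eqref{sup-f} with $\|U_{x,\lambda} - \lambda^{-1/2} H_0(x,\cdot)\|_5^5 = \mathcal O(1)$ shows $\int_\Omega \mathcal R(f_{x,\lambda})\,dy = o((\lambda d)^{-1})$. Binomially expanding the remaining sixth power produces $\int_\Omega U_{x,\lambda}^6\,dy - 6\lambda^{-1/2}\int_\Omega U_{x,\lambda}^5 H_0(x,\cdot)\,dy$ plus strictly lower-order terms that can be controlled by Hölder using $\|U_{x,\lambda}\|_p$ for various $p$ and the $L^\infty$ bound \eqref{sup-h} on $H_0(x,\cdot)$. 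The first piece gives $(S/3)^{3/2}$ and the second piece $-8\pi \phi_0(x)\lambda^{-1}$, again via the uniform version of Lemma \ref{lem-uh}.

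The main obstacle is the uniformity in $x \in \Omega$ with the prescribed $d$-dependent error: Lemma \ref{lem-uh} is stated only uniformly on compact subsets of $\Omega$, but here $\phi_0(x) \sim d^{-1}$ blows up at the boundary and forms part of the main term. The remedy is a scaling $y = x + z/\lambda$ reducing the integrals to ones over $(\Omega - x)\lambda$, and exploiting the fact that $H_0(x,\cdot)$ is harmonic on $\Omega$ with $\|H_0(x,\cdot)\|_\infty = d^{-1}$ to estimate $H_0(x, x+z/\lambda) - \phi_0(x)$ on $B_{\delta d}(x)$ via interior gradient bounds for harmonic functions, while the complementary region contributes a tail of size $\mathcal O((\lambda d)^{-2} d^{-1})= o((\lambda d)^{-1})$. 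These essentially classical bounds of Rey \cite{rey2} yield the required uniform expansions.
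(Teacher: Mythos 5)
Your approach coincides in substance with the paper's: both use \eqref{eq-pu} and \eqref{el} to write $\int_\Omega|\nabla PU_{x,\lambda}|^2\,dy = 3\int_\Omega U_{x,\lambda}^5 PU_{x,\lambda}\,dy$, both expand $PU_{x,\lambda} = U_{x,\lambda}-\phi_{x,\lambda}$ with $\phi_{x,\lambda}=\lambda^{-1/2}H_0(x,\cdot)+f_{x,\lambda}$, and both defer the uniform-in-$x$ estimate of $\int_\Omega U_{x,\lambda}^5\phi_{x,\lambda}\,dy$ to the literature. The paper keeps $\phi_{x,\lambda}$ as a single object, quotes $\int_\Omega U_{x,\lambda}^6\,dy = (S/3)^{3/2}+o((d\lambda)^{-1})$ from Rey and $\int_\Omega U_{x,\lambda}^5\phi_{x,\lambda}\,dy = \tfrac{4\pi}{3}\phi_0(x)\lambda^{-1}+o((d\lambda)^{-1})$ from Esposito, and for \eqref{eq:pu6} uses $\|\phi_{x,\lambda}\|_\infty = \mathcal O(\lambda^{-1/2}d^{-1})$ and $\|\phi_{x,\lambda}\|_6 = \mathcal O((d\lambda)^{-1/2})$; you split off $f_{x,\lambda}$ explicitly and treat it through \eqref{sup-f}. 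The content is the same, and your remark that Lemma \ref{lem-uh} must be redone near the boundary using harmonicity of $H_0(x,\cdot)$ together with interior gradient bounds is exactly what the paper alludes to.

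There are, however, two bookkeeping slips. First, your tail estimate $\mathcal O((\lambda d)^{-2}d^{-1})$ is \emph{not} $o((\lambda d)^{-1})$ uniformly in $x$: the inequality $(\lambda d)^{-2}d^{-1}=o((\lambda d)^{-1})$ is equivalent to $\lambda d^2\to\infty$, which does not follow from $\lambda d\to\infty$ when $d\to 0$. The correct estimate of the outer contribution is
\[
\lambda^{-1/2}\int_{\Omega\setminus B_{\delta d}(x)}U_{x,\lambda}^5\,|H_0(x,y)|\,dy \,\lesssim\, \lambda^{-1/2}\cdot \lambda^{-5/2}\, d^{-1}(\delta d)^{-2} \,=\, \mathcal O\big((\lambda d)^{-3}\big),
\]
which is indeed $o((\lambda d)^{-1})$. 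Second, for \eqref{eq:pu6} the bound $\|f_{x,\lambda}\|_\infty\,\|U_{x,\lambda}-\lambda^{-1/2}H_0(x,\cdot)\|_5^5$ with $\|\cdot\|_5^5=\mathcal O(1)$ is too crude to give $o((\lambda d)^{-1})$ near the boundary (one only gets $\mathcal O(\lambda^{-5/2}d^{-3})$, which again requires $\lambda d^{4/3}\to\infty$). It is cleaner to keep $\phi_{x,\lambda}$ whole, as the paper does, and bound the remainder after the first two binomial terms by $\|U_{x,\lambda}\|_4^4\|\phi_{x,\lambda}\|_\infty^2 + \|\phi_{x,\lambda}\|_6^6 = \mathcal O((\lambda d)^{-2}) = o((\lambda d)^{-1})$. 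Neither slip invalidates your plan; both are repaired by Rey's estimates that you already invoke.
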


\begin{proof}
We set again $\phi_{x,\lambda}=U_{x,\lambda}-PU_{x,\lambda}$. Then, by \eqref{eq-pu} and \eqref{el},
$$
\int_\Omega |\nabla P_{x,\lambda}|^2\,dy = \int_\Omega \nabla PU_{x,\lambda} \cdot \nabla U_{x,\lambda} \,dy = 3 \int_\Omega PU_{x,\lambda} U_{x,\lambda}^5 \,dy = 3\int_\Omega U_{x,\lambda}^6\,dy - 3\int_\Omega U_{x,\lambda}^5\phi_{x,\lambda}\,dy \,.
$$
By \cite[Proof of (B.3)]{rey2}
\begin{equation}
\label{eq:u6}
\int_\Omega U_{x,\lambda}^6\,dy = (S/3)^{3/2} + o((d\lambda)^{-1}) \,,
\end{equation}
and, as shown in \cite[Proof of Thm.\!\! 1.1]{esp},
\begin{equation}
\label{eq:u5phi}
\int_\Omega U_{x,\lambda}^5\phi_{x,\lambda} \,dy = \frac{4\pi}{3}\, \phi_0(x)\, \lambda^{-1} + o( (d\lambda)^{-1}) \,.
\end{equation}
(Since $\phi_{x,\lambda}=\lambda^{-1/2} H_0(x,\cdot)+f_{x,\lambda}$, the proof of the latter relation is similar to the proof of Lemma \ref{lem-uh}, but to get the uniformity even for $x$ close to the boundary more careful bounds on $\nabla_y H_0(x,y)$ are needed.) This proves \eqref{eq:nablapu}.

To prove \eqref{eq:pu6}, we write
$$
\int_\Omega PU_{x,\lambda}^6\,dy = \int_\Omega U_{x,\lambda}^6\,dy - 6 \int_\Omega U_{x,\lambda}^5 \phi_{x,\lambda}\,dy + \mathcal O\left( \|U_{x,\lambda}\|_4^4 \|\phi_{x,\lambda}\|_\infty^2 + \|\phi_{x,\lambda}\|_6^6 \right).
$$
For the first two terms we use \eqref{eq:u6}, \eqref{eq:u5phi}. Moreover, $\|\phi_{x,\lambda}\|_\infty = \mathcal O(\lambda^{-1/2} d^{-1})$ (from \eqref{sup-h} and \eqref{sup-f}), $\|\phi_{x,\lambda}\|_6 = \mathcal O((d\lambda)^{-1/2})$ (from \cite[Prop.\!\! 1 (c)]{rey2}) and $\|U_{x,\lambda}\|_4^4=\mathcal O(\lambda^{-1})$, so the remainder term is $o((d\lambda)^{-1})$.
\end{proof}

\begin{lem}
As $\lambda\to\infty$, uniformly in $x\in\Omega$,
 \begin{equation}
\label{eq:zm65}
\| PU_{x,\lambda} \|_{6/5} = \mathcal O(\lambda^{-1/2}) \,.
\end{equation}
Moreover, for $x$ in compact subset of $\Omega$,
 \begin{equation}
\label{eq:zm65der}
\| \partial_\lambda PU_{x,\lambda} \|_{6/5} = \mathcal O(\lambda^{-3/2}) \,.
\end{equation}
\begin{equation}
\label{eq:psiuinfty}
\| \psi_{x,\lambda}-U_{x,\lambda}\|_\infty = \mathcal O(\lambda^{-1/2}) \,.
\end{equation}
and
\begin{equation}
\label{eq:psibound65}
\|\psi_{x,\lambda}\|_{6/5} = \mathcal O(\lambda^{-1/2}) \,.
\end{equation}
\end{lem}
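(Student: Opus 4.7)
All four bounds reduce, via the decomposition $PU_{x,\lambda} = U_{x,\lambda} - \lambda^{-1/2} H_0(x,\cdot) - f_{x,\lambda}$ from \eqref{PU exp} and the definition \eqref{eq:psi} of $\psi_{x,\lambda}$, to direct estimates on $U_{x,\lambda}$, $\partial_\lambda U_{x,\lambda}$, $H_a(x,\cdot)$ and $f_{x,\lambda}$. The core scaling computation is $\|U_{x,\lambda}\|_{6/5} = \mathcal O(\lambda^{-1/2})$ uniformly in $x\in\Omega$, which follows by the substitution $z=\lambda(y-x)$ and the boundedness of $\Omega$:
$$
\|U_{x,\lambda}\|_{6/5}^{6/5} = \lambda^{-12/5} \int_{\lambda(\Omega-x)} \frac{dz}{(1+|z|^2)^{3/5}} \lesssim \lambda^{-12/5}(\lambda\,\mathrm{diam}\,\Omega)^{9/5} = \lambda^{-3/5}(\mathrm{diam}\,\Omega)^{9/5},
$$
where the middle estimate uses that $(1+|z|^2)^{-3/5}$ behaves like $|z|^{-6/5}$ at infinity, so its integral over a ball of radius $R$ in $\R^3$ is $\mathcal O(R^{9/5})$. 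Combined with $0\leq PU_{x,\lambda}\leq U_{x,\lambda}$ from \cite[Prop.\,1\,(a)]{rey2}, this yields \eqref{eq:zm65}.

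For \eqref{eq:zm65der}, the pointwise inequality $|\partial_\lambda U_{x,\lambda}(y)| \leq (2\lambda)^{-1} U_{x,\lambda}(y)$, which follows from $|1-\lambda^2|y-x|^2|\leq 1+\lambda^2|y-x|^2$ applied to the explicit formula for $\partial_\lambda U_{x,\lambda}$ recalled in the proof of Lemma \ref{lem-bg}, immediately gives $\|\partial_\lambda U_{x,\lambda}\|_{6/5} = \mathcal O(\lambda^{-3/2})$. Differentiating \eqref{PU exp} produces $\partial_\lambda PU_{x,\lambda} = \partial_\lambda U_{x,\lambda} + \tfrac12 \lambda^{-3/2} H_0(x,\cdot) - \partial_\lambda f_{x,\lambda}$; on compact subsets of $\Omega$ the middle term is $\mathcal O(\lambda^{-3/2})$ in $L^{6/5}$ thanks to \eqref{sup-h}. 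For the last term I would note that $f_{x,\lambda}$ is harmonic in $\Omega$ with boundary data $\lambda^{-1/2}\bigl[(1+\lambda^2|y-x|^2)^{-1/2} - |y-x|^{-1}\bigr]$; differentiating this boundary data in $\lambda$ and bounding each resulting term as in \eqref{u-est}, using $|y-x|\geq d(x)\gtrsim 1$ on compact subsets, gives an $\mathcal O(\lambda^{-5/2})$ bound in $L^\infty(\partial\Omega)$, hence the same in $L^\infty(\Omega)$ by the maximum principle. This is more than enough.

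Finally, combining \eqref{eq:psi} and \eqref{PU exp} produces the identity
$$
\psi_{x,\lambda} - U_{x,\lambda} = -\lambda^{-1/2} H_a(x,\cdot) - f_{x,\lambda},
$$
from which \eqref{eq:psiuinfty} follows at once from \eqref{sup-h-2}, \eqref{sup-f}, and $d(x)\gtrsim 1$ on compact subsets. The bound \eqref{eq:psibound65} is then obtained from the triangle inequality
$$
\|\psi_{x,\lambda}\|_{6/5} \leq \|U_{x,\lambda}\|_{6/5} + \lambda^{-1/2}\|H_a(x,\cdot)\|_{6/5} + \|f_{x,\lambda}\|_{6/5},
$$
in which the first term is controlled by the core computation, the second by \eqref{sup-h-2} and the boundedness of $\Omega$, and the third by \eqref{sup-f}. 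The only step that needs genuine care is the treatment of $\partial_\lambda f_{x,\lambda}$ in \eqref{eq:zm65der}, since one cannot simply differentiate the sup-norm bound \eqref{sup-f} with respect to $\lambda$; everything else is a routine scaling estimate.
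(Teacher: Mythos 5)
Your proof is correct and follows essentially the same route as the paper: the scaling computation for $\|U_{x,\lambda}\|_{6/5}$, the comparison $0\leq PU_{x,\lambda}\leq U_{x,\lambda}$, the pointwise bound $|\partial_\lambda U_{x,\lambda}|\leq (2\lambda)^{-1}U_{x,\lambda}$, and the decompositions \eqref{PU exp} and \eqref{eq:psi} combined with \eqref{sup-h-2}, \eqref{sup-f} are exactly the ingredients the paper uses. The one place where you diverge is in bounding $\partial_\lambda f_{x,\lambda}$: the paper simply invokes the $L^\infty$ (and hence $L^6$) estimate on $\partial_\lambda U_{x,\lambda}-\partial_\lambda PU_{x,\lambda}$ from \cite[Prop.\,1\,(c)]{rey2}, while you rederive it from scratch via harmonicity of $\partial_\lambda f_{x,\lambda}$ and the maximum principle applied to the boundary trace. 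This makes the argument more self-contained, at the cost of a slightly lossy bound ($\mathcal O(\lambda^{-5/2})$ where the sharp rate is $\mathcal O(\lambda^{-7/2})$), which is harmless here. One small typo: the boundary data of $f_{x,\lambda}$ should read $\lambda^{-1/2}\bigl[\lambda(1+\lambda^2|y-x|^2)^{-1/2}-|y-x|^{-1}\bigr]$, i.e.\ you dropped a factor of $\lambda$ in the first term; this does not affect the estimate.
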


\begin{proof}
The bound \eqref{eq:zm65} follows from $0\leq PU_{x,\lambda}\leq U_{x,\lambda}$ (see \cite[Prop.\! 1(a)]{rey2}) and a straightforward computation for $U_{x,\lambda}$, using the fact that $\Omega$ is bounded.

To prove \eqref{eq:zm65der} we first note that, by a straightforward computation, the claimed bound holds with $\partial_\lambda U_{x,\lambda}$ instead of $\partial_\lambda PU_{x,\lambda}$. The claimed bound now follows since by the bound on $\partial_\lambda U_{x,\lambda}-\partial_\lambda PU_{x,\lambda}$ in \cite[Prop.\! 1 (c)]{rey2} (which holds even in $L^6$).

For the proof of \eqref{eq:psiuinfty} we write $\psi_{x,\lambda}-U_{x,\lambda}= -\lambda^{-1/2}H_a(x,\cdot)-f_{x,\lambda}$. Then \eqref{eq:psiuinfty} follows from  \eqref{sup-h-2} and \eqref{sup-f}. Finally, \eqref{eq:psibound65} follows from \eqref{eq:zm65} and \eqref{eq:psiuinfty}.
\end{proof}


\subsection{Proof of \eqref{eq:n1}}

We have
\begin{align*}
N_1 & = \beta^2 \lambda^{-2} \int_\Omega |\nabla PU_{x,\lambda}|^2\,dy 
+ \gamma^2 \int_\Omega |\nabla\partial_\lambda PU_{x,\lambda}|^2\,dy
+2 \beta\gamma \lambda^{-1} \int_\Omega \nabla PU_{x,\lambda}\cdot\nabla\partial_\lambda PU_{x,\lambda}\,dy \\
& \quad + 2\beta \lambda^{-1} \int_\Omega \nabla\psi_{x,\lambda}\cdot\nabla PU_{x,\lambda}\,dy + 2\gamma \int_\Omega \nabla\psi_{x,\lambda}\cdot\nabla\partial_\lambda PU_{x,\lambda}\,dy \\
& \quad + 2\beta \lambda^{-1} \int_\Omega a \psi_{x,\lambda} PU_{x,\lambda}\,dy + 2\gamma \int_\Omega a \psi_{x,\lambda} \partial_\lambda PU_{x,\lambda}\,dy \,.
\end{align*}
Therefore \eqref{eq:n1} will follow from the following relations, together with the facts that $\phi_a(x)=o(1)$ by Proposition \ref{thm-q} and that $\beta,\gamma=\mathcal O(1)$ by Lemma \ref{lem-bg},
\begin{align}
\lambda^{-2} \int_\Omega |\nabla PU_{x,\lambda}|^2 \,dy &= \frac{3 \pi^2}{4} \lambda^{-2}  + o (\lambda^{-2}), \label{num1} \\
\int_\Omega |\nabla \partial_\lambda PU_{x,\lambda}|^2 \,dy &=  \frac{15\,\pi^2}{64 } \lambda^{-2} + o(\lambda^{-2}), \label{num2} \\
\lambda^{-1} \int_\Omega \nabla PU_{x,\lambda}\cdot\nabla\partial_\lambda PU_{x,\lambda} \,dy &= o(\lambda^{-2}), \label{num2a} \\
 \lambda^{-1} \int_\Omega \nabla \psi_{x,\lambda} \cdot \nabla PU_{x,\lambda} \,dy &=  \frac{3 \pi^2}{4}  \lambda^{-1} - 4 \pi\,  \phi_a(x)\, \lambda^{-2} + o (\lambda^{-2}), \label{num3} \\
 \int_\Omega \nabla \psi_{x,\lambda} \cdot \nabla \partial_\lambda PU_{x,\lambda} \,dy &=  2 \pi\,  \phi_a(x)\, \lambda^{-2}   + o(\lambda^{-2}), \label{num5} \\
 \lambda^{-1} \int_\Omega a \psi_{x,\lambda} PU_{x,\lambda} \,dy &=  4 \pi\, (\phi_a(x) - \phi_0(x))\, \lambda^{-2} + o (\lambda^{-2}), \label{num4}\\
 \int_\Omega a \psi_{x,\lambda} \partial_\lambda PU_{x,\lambda} \,dy 
&= - 2 \pi\,  (\phi_a(x) - \phi_0(x))\, \lambda^{-2}  + o (\lambda^{-2}) . \label{num6}
\end{align}

For the proof of these bounds we recall that $d\gtrsim 1$ by Proposition \ref{wbound}.

The bounds \eqref{num1}, \eqref{num2} and \eqref{num2a} follow from \cite[(B.2), (B.7) and (B.5)]{rey2}, respectively.

For the proof of the remaining assertions we decompose $\psi_{x,\lambda}=U_{x,\lambda}-\lambda^{-1/2} H_a(x,\cdot) - f_{x,\lambda}$ and recall the bound \eqref{sup-f} on $f_{x,\lambda}$.

\emph{Proof of \eqref{num3}.   } 
By \eqref{eq-pu} and \eqref{el},
$$
\lambda^{-1} \int_\Omega\nabla \psi_{x,\lambda} \cdot \nabla PU_{x,\lambda}\,dy = 3 \lambda^{-1} \int_\Omega U_{x,\lambda}^5 (U_{x,\lambda} - \lambda^{-1/2}H_a(x,\cdot))\,dy + o(\lambda^{-2}). 
$$
By \eqref{eq:u6}, $3 \lambda^{-1} \int_\Omega  U_{x,\lambda}^6\,dy = \frac{3 \pi^2 }{4} \lambda^{-1}+o(\lambda^{-2})$. On the other hand, by Lemma \ref{lem-uh},
\[ 
3 \lambda^{-3/2} \int_\Omega U_{x,\lambda}^5 H_a(x, y)\,dy = 4 \pi \,\phi_a(x)\, \lambda^{-2} + o(\lambda^{-2}). 
\] 

\emph{Proof of \eqref{num5}.   } 
By differentiating \eqref{eq-pu} and \eqref{el},
\[ \int_\Omega\nabla \psi_{x,\lambda} \cdot \nabla \partial_\lambda PU_{x,\lambda}\,dy = 15 \int_\Omega (U_{x,\lambda} - \lambda^{-1/2} H_a(x, y))  U_{x,\lambda}^4 \partial_\lambda U_{x,\lambda}\,dy + o(\lambda^{-2}). \]
To compute the first summand, we use $\int_{\R^3}  U_{x,\lambda}^5 \partial_\lambda U_{x,\lambda}\,dy = \partial_\lambda  \int_{\R^3}  U_{x,\lambda}^6\,dy = 0$ and thus
\[  \left| \int_\Omega U_{x,\lambda}^5 \partial_\lambda U_{x,\lambda}\,dy \right| =  \left| \int_{\R^3 \setminus \Omega } U_{x,\lambda}^5 \partial_\lambda U_{x,\lambda}\,dy \right| \leq (2\lambda)^{-1} \int_{\R^3 \setminus B_{\lambda d}(x)} \frac{|1-|x-z|^2|}{(1 + |x-z|^2)^{4}}\,dz = \mathcal{O}(\lambda^{-4}). \]

To compute the second summand we argue similarly as in the proof of Lemmas \ref{lem-uh} and \ref{lem-uh2} and obtain
 \[ -15\, \lambda^{-1/2} \int_\Omega H_a(x,y)  U_{x,\lambda}^4 \partial_\lambda U_{x,\lambda}\,dy = 2 \pi\, \phi_a(x)\,\lambda^{-2} + o(\lambda^{-2}) \,. \]
The constant comes from
$$
\int_{\R^3} U_{x,\lambda}^4 \partial_\lambda U_{x,\lambda}\,dy = 2\pi\,\lambda^{-3/2} \int_0^\infty \frac{(1-t^2)t^2\,dt}{(1+t^2)^{7/2}} = -\frac{2\pi}{15} \, \lambda^{-3/2} \,.
$$

\emph{Proof of \eqref{num4}.   } 
Since $PU_{x,\lambda}=U_{x,\lambda}- \lambda^{-1/2} H_0(x,\cdot)-f_{x,\lambda}$,
\[ \lambda^{-1} \int_\Omega a \psi_{x,\lambda} PU_{x,\lambda}\,dy = \lambda^{-1} \int_\Omega a (U_{x,\lambda} - \lambda^{-1/2} H_a(x, y))(U_{x,\lambda} - \lambda^{-1/2} H_0(x, y))\,dy + o(\lambda^{-2}) \,. \]
We have 
\[ \lambda^{-1} \int_\Omega a U_{x,\lambda}^2\,dy = \lambda^{-2} \int_\Omega a(y) \frac{1}{\lambda^{-2} + |x-y|^2} \,dy  = \lambda^{-2} \int_\Omega a(y) \frac{1}{|x-y|^2} \,dy + o(\lambda^{-2}) \]
and, similarly, 
\[ - \lambda^{-3/2} \int_\Omega a U_{x,\lambda} (H_a(x, y) + H_0(x, y))\,dy = - \lambda^{-2} \int_\Omega a(y) \frac{H_a(x, y) + H_0(x, y) }{|x-y|} \,dy + o(\lambda^{-2}). \]
Putting everything together and recalling that $G_a(x,y) = \frac{1}{|x-y|} - H_a(x,y)$, we obtain 
\[  \lambda^{-1} \int_\Omega a \psi_{x,\lambda} PU_{x,\lambda}\,dy = \lambda^{-2} \int_\Omega a(y) G_a(x,y) G_0(x,y) \,dy +o(\lambda^{-2}) =   4 \pi (\phi_a(x) - \phi_0(x))\,\lambda^{-2} + o(\lambda^{-2}) , \]
where the last equality follows from the resolvent identity \eqref{resolvent}.

\emph{Proof of \eqref{num6}.   } 
Since $\|\partial_\lambda f_{x,\lambda}\|_\infty = \mathcal O(\lambda^{-7/2})$ by \cite[Prop.\! 1 (c)]{rey2}, we get, similarly as before,
\[  \int_\Omega a \psi_{x,\lambda} \partial_\lambda PU_{x,\lambda}\,dy =  \int_\Omega a (U_{x,\lambda} - \lambda^{-1/2} H_a(x,y)) (\partial_\lambda U_{x, \lambda} + \frac{1}{2} \lambda^{-3/2} H_0(x, y))\,dy  + o (\lambda^{-2}) \,. \]
We have 
\[  \int_\Omega a U_{x,\lambda} \partial_\lambda U_{x,\lambda}\,dy = \frac12 \lambda^{-2}  \int_\Omega a(y) \frac{\lambda^{-2} - |x-y|^2}{(\lambda^{-2} + |x-y|^2)^{2}} \,dy = -  \frac12 \lambda^{-2} \int_\Omega a(y) \frac{1}{|x-y|^2} \,dy + o(\lambda^{-2}) \]
and, similarly,
\[ -  \lambda^{-1/2} \int_\Omega a H_a(x, y) \partial_\lambda U_{x,\lambda} \,dy =  \frac12 \lambda^{-2} \int_\Omega a(y) \frac{H_a(x,y)}{|x-y|} \,dy  \]
and 
\[   \lambda^{-3/2} \int_\Omega a U_{x,\lambda} H_0(x, y)\,dy =  \lambda^{-2} \int_\Omega a(y) \frac{H_0(x,y)}{|x-y|} \,dy + o(\lambda^{-2}) \,. \]
Putting everything together and using the resolvent identity \eqref{resolvent} as in the proof of \eqref{num4}, we obtain \eqref{num6}. 

This completes the proof of \eqref{eq:n1}.


\subsection{Proof of \eqref{eq:d1}}

We have
\begin{align*}
D_1 & = 6\, \beta \lambda^{-1} \int_\Omega \psi_{x,\lambda}^5 PU_{x,\lambda}\,dy + 6\, \gamma \int_\Omega \psi_{x,\lambda}^5 \partial_\lambda PU_{x,\lambda}\,dy \\
& \quad + 15\, \beta^2 \lambda^{-2} \!\int_\Omega \psi_{x,\lambda}^4 PU_{x,\lambda}^2\,dy + 15\,\gamma^2 \!\int_\Omega \psi_{x,\lambda}^4 (\partial_\lambda PU_{x,\lambda})^2\,dy + 30\,\beta\gamma \lambda^{-1} \!\int_\Omega \psi_{x,\lambda}^4 PU_{x,\lambda} \partial_\lambda PU_{x,\lambda}\,dy.
\end{align*}
Therefore \eqref{eq:d1} will follow from the following relations, together with the facts that $\phi_a(x)=o(1)$ by Proposition \ref{thm-q} and that $\beta,\gamma=\mathcal O(1)$ by Lemma \ref{lem-bg},
\begin{align}
\lambda^{-1} \int_\Omega \psi_{x,\lambda}^5 PU_{x,\lambda}\,dy & = \frac{\pi^2}{4}\,\lambda^{-1}  - \frac{4\pi}{3}\, ( 5\,\phi_a(x)+\phi_0(x))\, \lambda^{-2} + o(\lambda^{-2}) \,, \label{eq:den1} \\
\int_\Omega \psi_{x,\lambda}^5 \partial_\lambda PU_{x,\lambda}\,dy
& =  \frac{2\pi }{3}\,(\phi_a(x)+\phi_0(x))\, \lambda^{-2} + o(\lambda^{-2}) \,, \label{eq:den2} \\
\lambda^{-2} \int_\Omega \psi_{x,\lambda}^4 PU_{x,\lambda}^2\,dy & = \frac{\pi^2}{4}\,\lambda^{-2} + o(\lambda^{-2}) \,, \label{eq:den3} \\
\int_\Omega \psi_{x,\lambda}^4 (\partial_\lambda PU_{x,\lambda})^2\,dy & = \frac{\pi^2}{64}\,\lambda^{-2} + o(\lambda^{-2}) \,, \label{eq:den4} \\
\lambda^{-1} \int_\Omega \psi_{x,\lambda}^4 PU_{x,\lambda} \partial_\lambda PU_{x,\lambda}\,dy & = o(\lambda^{-2}) \,. \label{eq:den5}
\end{align}

\emph{Proof of \eqref{eq:den1}.}
We insert $\psi_{x,\lambda}=U_{x,\lambda}-\lambda^{-1/2} H_a(x,\cdot)-f_{x,\lambda}$ and $PU_{x,\lambda}=U_{x,\lambda}- \lambda^{-1/2} H_0(x,\cdot) - f_{x,\lambda}$ to obtain
\begin{align*}
\lambda^{-1} \int_\Omega \psi_{x,\lambda}^5 PU_{x,\lambda}\,dy
& = \lambda^{-1} \int_\Omega U_{x,\lambda}^6\,dy - \lambda^{-3/2} \int_\Omega U_{x,\lambda}^5 (5\, H_a(x,y)+ H_0(x,y)) \,dy + o(\lambda^{-2}) \,.
\end{align*}
For the first term we use \eqref{eq:u6} and for the second term we use Lemma \ref{lem-uh}.

\emph{Proof of \eqref{eq:den2}.}
Similarly as before, we obtain
\begin{align*}
\int_\Omega \psi_{x,\lambda}^5 \partial_\lambda PU_{x,\lambda}\,dy
& = \int_\Omega U_{x,\lambda}^5 \partial_\lambda U_{x,\lambda}\,dy - 5 \lambda^{-1/2} \int_\Omega U_{x,\lambda}^4\partial_\lambda U_{x,\lambda} H_a(x,y) \,dy \\
& \quad + \frac12\,\lambda^{-3/2} \int_\Omega U_{x,\lambda}^5 H_0(x,y)\,dy + o(\lambda^{-2}) \,.
\end{align*}
For the first and the second term we argue as in the proof of \eqref{num5} and for the third one we use Lemma \ref{lem-uh}.

The bounds \eqref{eq:den3}, \eqref{eq:den4} and \eqref{eq:den5} follow from the corresponding relations where $\psi_{x,\lambda}$ and $PU_{x,\lambda}$ are replaced by $U_{x,\lambda}$ and where $\partial_\lambda PU_{x,\lambda}$ is replaced by $\partial_\lambda U_{x,\lambda}$.

This completes the proof of \eqref{eq:d1}.

 
\section{Proof of Proposition \ref{prop-app-min}}
\label{sec-app-b} 

In this appendix we provide a proof of the approximate form of almost minimizers. This result is probably well-known to specialists.

\begin{proof}[\bf Proof of Proposition \ref{prop-app-min}] 

\emph{Step 1.} We show that $u_\epsilon\rightharpoonup 0$ in $H^1_0(\Omega)$.

The assumptions imply that $(u_\epsilon)$ is bounded in $H^1_0(\Omega)$ and therefore it has a weak limit point. Let $u_0\in H^1_0(\Omega)$ be such a limit point and write $r_\epsilon := u_\epsilon - u_0$. In the remainder of this step we restrict ourselves to values of $\epsilon$ along which $r_\epsilon\rightharpoonup 0$ in $H^1_0(\Omega)$. By Rellich's compactness theorem $r_\epsilon\to 0$ in $L^2(\Omega)$ and, passing to a subsequence if necessary, we may assume that $r_\epsilon\to 0$ almost everywhere in $\Omega$. By weak convergence in $H^1_0(\Omega)$ and strong convergence in $L^2(\Omega)$ we have
\begin{align*}
3^{-1/2} S^{3/2} +o(1) & = \int_\Omega \left( |\nabla u_\epsilon|^2 + a u_\epsilon^2 + \epsilon V u_\epsilon^2 \right)dx \\
& = \int_\Omega \left( |\nabla u_0|^2 + a u_0^2 \right)dx + \int_\Omega |\nabla r_\epsilon|^2\,dx + o(1) \,.
\end{align*}
Thus,
$$
T := \lim_{\epsilon\to 0} \int_\Omega |\nabla r_\epsilon|^2\,dx
\qquad\text{exists and satisfies}
\qquad
3^{-1/2} S^{3/2} = \int_\Omega \left( |\nabla u_0|^2 + a u_0^2 \right)dx + T \,.
$$
On the other hand, by the almost everywhere convergence and the Br\'ezis--Lieb lemma \cite{BrLi},
$$
(S/3)^{3/2} = \int_\Omega u_\epsilon^6\,dx = \int_\Omega u_0^6\,dx + \int_\Omega r_\epsilon^6\,dx + o(1) \,. 
$$
Thus,
$$
M := \lim_{\epsilon\to 0} \int_\Omega r_\epsilon^6\,dx
\qquad\text{exists and satisfies}
\qquad
(S/3)^{3/2} = \int_\Omega u_0^6\,dx + M \,.
$$
We conclude that
$$
S = \lim_{\epsilon\to 0} \mathcal S_\epsilon[u_\epsilon] = \frac{\int_\Omega \left( |\nabla u_0|^2 + a u_0^2 \right)dx + T}{\left(\int_\Omega u_0^6\,dx + M \right)^{1/3}} \, . 
$$
In the denominator, we bound
\begin{equation}
\label{eq:elementary}
\left(\int_\Omega u_0^6\,dx + M \right)^{1/3} \leq \left(\int_\Omega u_0^6\,dx\right)^{1/3} + M^{1/3}
\end{equation}
and in the numerator we bound $T\geq S M^{1/3}$. Rearranging terms, we thus obtain
$$
S \left(\int_\Omega u_0^6\,dx\right)^{1/3} \geq \int_\Omega \left( |\nabla u_0|^2 + a u_0^2 \right)dx \,.
$$
Since the opposite inequality holds as well by definition of $S(a)$ and the assumption that $S(a)=S$, we need to have, in particular, equality in \eqref{eq:elementary}. It is elementary to see that this holds if and only if either $\int u_0^6\,dx =0$ (that is, $u_0\equiv 0$) or if $M=0$. 

Let us rule out the case $M=0$. If we had $M=0$, then, in particular, $u_0\not\equiv 0$ and therefore $u_0$ would be a minimizer for the $S(a)$ problem. However, as shown by Druet (Step 1 in \cite{dr}), the $S(a)$ problem does not have a minimizer. (Note that this part of Druet's paper does not need any regularity of $a$.) Thus, $M>0$, which, as explained before, implies $u_0\equiv 0$.

\emph{Step 2.} We show that along a subsequence,
\begin{equation}
\label{eq:decomprelim}
u_\epsilon = s\, U_{z_\epsilon,\mu_\epsilon} + \sigma_\epsilon
\end{equation}
with $s\in\{\pm 1\}$, $z_\epsilon\to x_0\in\overline\Omega$, $\mu_\epsilon\, \mathrm{dist}(z_\epsilon,\partial\Omega)\to\infty$ and $\sigma_\epsilon\to 0$ in $\dot H^1(\R^3)$.

Indeed, by Step 1 and Rellich's compactness theorem we have $u_\epsilon \to 0$ in $L^2(\Omega)$ and therefore
$$
\frac{\int_\Omega |\nabla u_\epsilon|^2\,dx}{\left( \int_\Omega u_\epsilon^6\,dx\right)^{1/3}} \to S \,.
$$
Thus, the $u_\epsilon$, extended by zero to functions in $\dot H^1(\R^3)$, form a minimizing sequence for the Sobolev quotient. By a theorem of Lions \cite{Lions} there exist $(z_\epsilon)\subset\R^3$ and $(\mu_\epsilon)\subset\R_+$ such that, along a subsequence, $\mu_\epsilon^{-1/2} u_\epsilon(\mu_\epsilon^{-1} \cdot + z_\epsilon)$ converges in $\dot H^1(\R^3)$ to a function, which is an optimizer for the Sobolev inequality. By the classification of these optimizers (which appears, for instance, in \cite[Cor. I.1]{Lions}) and taking the normalization of the $u_\epsilon$ into account, we can assume, after modifying the $\mu_\epsilon$ and $z_\epsilon$, that
$$
\mu_\epsilon^{-1/2} u_\epsilon(\mu_\epsilon^{-1} \cdot + z_\epsilon) \to s\, U_{0,1}
\qquad\text{in}\ \dot H^1(\R^3)
$$
for some $s\in\{\pm 1\}$. By a change of variables (which preserves the $\dot H^1(\R^3)$ norm) this is the same as \eqref{eq:decomprelim}.

Note that
$$
\int_{\R^3} U^6\,dx = \int_\Omega u_\epsilon^6\,dx = \int_\Omega (s U_{z_\epsilon,\mu_\epsilon}+\sigma_\epsilon)^6\,dx = \int_\Omega U_{z_\epsilon,\mu_\epsilon}^6\,dx + o(1) \,.
$$
Thus, $\mu_\epsilon\to\infty$ and $\mathrm{dist}(z_\epsilon,\Omega)\to 0$. Using, in addition, that the boundary of $\Omega$ is $C^1$, we conclude that $\mu_\epsilon\, \mathrm{dist}(z_\epsilon,\R^3\setminus\Omega)\to\infty$. In particular, after passing to a subsequence, $z_\epsilon\to x_0\in\overline\Omega$.

\emph{Step 3.} We now conclude the proof of the proposition.

Since the remaining arguments are similar to those in \cite[Prop. 2]{rey2} we omit most of the details. As in that paper, the conclusions from Step 2 allow us to apply the result of Bahri--Coron \cite[Prop. 7]{BaCo} and lead to a decomposition
$$
u_\epsilon = \alpha_\epsilon PU_{x_\epsilon,\lambda_\epsilon} + w_\eps
$$
with $x_\epsilon\in\Omega$, bounded $\alpha_\epsilon$ and $w_\epsilon\in T_{x_\epsilon,\lambda_\epsilon}^\bot$ such that $w_\epsilon\to 0$ in $H^1_0(\Omega)$. This implies
$$
\int_\Omega |\nabla (\alpha_\epsilon PU_{x_\epsilon,\lambda_\epsilon})|^2\,dy = \int_\Omega |\nabla u_\epsilon|^2\,dy + o(1) = 3^{-1/2} S^{3/2} + o(1) \,.
$$
By the same argument as in \cite[Prop.~2]{rey2} with $3^{-1/2} S^{3/2}$ instead of $\mu$ on the right side of \cite[(2.18)]{rey2} we infer that $\lambda_\epsilon/\mu_\epsilon + \mu_\epsilon/\lambda_\epsilon + \lambda_\epsilon\mu_\epsilon|x_\epsilon-z_\epsilon|\lesssim 1$. From this we conclude that $\lambda_\epsilon\to\infty$, $x_\epsilon\to x_0$ and $\lambda_\epsilon\mathrm{dist}(x_\eps,\partial\Omega) \to\infty$. Finally, using \cite[(B.2)]{rey2}, $\alpha_\epsilon\to s$. The last relation allows us to replace $w_\epsilon$ by $\alpha_\epsilon w_\epsilon$, which still has the same properties, and obtain the decomposition stated in the proposition. This completes the proof.
\end{proof}


\end{document}